\newcommand{\rbl}[1]{#1}
\newcommand{\betaparam}{\alpha}
\newtheorem{example}{Example}
\newtheorem{remark}{Remark}
\title{Solving stochastic weak Minty variational inequalities without increasing batch size}
\author{%
    Thomas Pethick\thanks{Laboratory for Information and Inference Systems (LIONS), EPFL (\href{mailto:thomas.pethick@epfl.ch}{thomas.pethick@epfl.ch})} \And 
    Olivier Fercoq\thanks{Laboratoire Traitement et Communication d'Information, Télécom Paris, Institut Polytechnique de Paris} \And
    Puya Latafat\thanks{Department of Electrical Engineering (ESAT-STADIUS), KU Leuven} \And 
    Panagiotis Patrinos\footnotemark[3] \And 
    Volkan Cevher\footnotemark[1]
}
\begin{document}

\maketitle
\begin{abstract}
This paper introduces a family of stochastic extragradient-type algorithms for a class of nonconvex-nonconcave problems characterized by the weak Minty variational inequality (MVI). Unlike existing results on extragradient methods in the monotone setting, employing diminishing stepsizes is no longer possible in the weak MVI setting. This has led to approaches such as increasing batch sizes per iteration which can however be prohibitively expensive. In contrast, our proposed methods involves two stepsizes and only requires one additional oracle evaluation per iteration. We show that it is possible to keep one fixed stepsize while it is only the second stepsize that is taken to be diminishing, making it interesting even in the monotone setting.
Almost sure convergence is established and we provide a unified analysis for this family of schemes which contains a nonlinear generalization of the celebrated primal dual hybrid gradient algorithm.
\end{abstract}

\etocdepthtag.toc{mtchapter}
\etocsettagdepth{mtchapter}{subsection}
\etocsettagdepth{mtappendix}{none}

\vspace{-.2em}
\section{Introduction}
\label{sec:introduction}

Stochastic first-order methods have been at the core of the current success in deep learning applications.
These methods are mostly well-understood for minimization problems at this point.
This is even the case in the nonconvex setting where there exists matching upper and lower bounds on the complexity for finding an approximately stable point \citep{arjevani2019lower}. %

The picture becomes less clear when moving beyond minimization into nonconvex-nonconcave minimax problems---or more generally nonmonotone variational inequalities.
Even in the deterministic case, finding a stationary point is in general intractable \citep{daskalakis2021complexity,hirsch1987exponential}. %
This is in stark contrast with minimization where only global optimality is NP-hard.

An interesting nonmonotone class for which we \emph{do} have efficient algorithms is characterized by the so called \emph{weak Minty variational inequality} (MVI) \citep{diakonikolas2021efficient}.
This problem class captures nontrivial structures such as attracting limit cycles and is governed by a parameter $\rho$ whose negativity increases the degree of nonmonotonicity.
It turns out that the stepsize $\gamma$ for the exploration step in extragradient-type schemes lower bounds the problem class through $\rho > -\nicefrac{\gamma}{2}$ \citep{pethick2022escaping}.
In other words, it seems that we need to take $\gamma$ large to guarantee convergence for a large class.
\looseness=-1

This \rbl{reliance on a} large stepsize is at the core of why the community has struggled to provide a stochastic variants for weak MVIs.
The only known results effectively increase the batch size \emph{at every iteration} \cite[Thm. 4.5]{diakonikolas2021efficient}---a strategy that would be prohibitively expensive in most machine learning applications.
\Citet{pethick2022escaping} proposed \eqref{eq:seg+} which attempts to tackle the noise by only diminishing the second stepsize.
This suffices in the special case of unconstrained quadratic games but can fail even in the monotone case as illustrated in \Cref{fig:monotone}.
This naturally raises the following research question:

\begin{center}
\emph{Can stochastic weak Minty variational inequalities be solved without increasing the batch size?}
\end{center}

We resolve this open problem in the affirmative \rbl{when the stochastic oracles are Lipschitz in mean}, with a modification of stochastic extragradient called \emph{bias-corrected stochastic extragradient} (BCSEG+).
The scheme only requires one additional first order oracle call, while crucially maintaining the fixed stepsize.
Specifically, we make the following contributions:
\begin{enumerate}
  \item We show that it is possible to converge for weak MVI \emph{without} increasing the batch size, by introducing a bias-correction term.
    The scheme introduces \emph{no additional hyperparameters} and recovers the maximal range $\rho \in (-\nicefrac{\gamma}{2},\infty)$ of explicit deterministic schemes.
    \rbl{The rate we establish is interesting already in the star-monotone case where only \emph{asymptotic} convergence of the norm of the operator was known when refraining from increasing the batch size \citep[Thm. 1]{hsieh2020explore}}.
    Our result additionally carries over to another class of problem treated in \Cref{app:negF}, which we call \emph{negative} weak MVIs.
  \item We generalize the result to a whole family of schemes that can treat constrained and regularized settings.
  First and foremost the class includes a generalization of the forward-backward-forward (FBF) algorithm of \citet{Tseng2000modified} to stochastic weak MVIs.
  The class also contains a stochastic nonlinear extension of the celebrated primal dual hybrid gradient (PDHG) algorithm \citep{Chambolle2011firstorder}.
  Both methods are obtained as instantiations of the same template scheme, thus providing a unified analysis and revealing an interesting requirement on the update under weak MVI when only stochastic feedback is available.

  \item We prove almost sure convergence under the classical Robbins-Monro stepsize schedule of the second stepsize.
    This provides a guarantee on the last iterate, which is especially important in the nonmonotone case, where average guarantees cannot be converted into a single candidate solution.
    Almost sure convergence is challenging already in the monotone case where even stochastic extragradient may not converge \citep[Fig. 1]{hsieh2020explore}.
\end{enumerate}

\section{Related work}
\label{sec:relatedwork}

\paragraph{Weak MVI}
\citet{diakonikolas2021efficient} was the first to observe that an extragradient-like scheme called extragradient+ \eqref{eq:eg+} converges globally for weak MVIs with $\rho \in (-\nicefrac{1}{8L_F}, \infty)$.
This results was later tightened to $\rho \in (-\nicefrac{1}{2L_F}, \infty)$ and extended to constrained and regularized settings in \citep{pethick2022escaping}.
A single-call variant has been analysed in \citet{bohm2022solving}.
\rbl{Weak MVI is a star variant of cohypomonotonicity}, for which an inexact proximal point method was originally studied in \citet{combettes2004proximal}.
Later, a tight characterization was carried out by \citet{bauschke2021generalized} for the exact case.
It was shown that acceleration is achievable for an extragradient-type scheme even for cohypomonotone problems \citep{lee2021fast}.
Despite this array of positive results the stochastic case is largely untreated for weak MVIs.
The only known result \cite[Theorem 4.5]{diakonikolas2021efficient} requires the batch size to be increasing. %
Similarly, the accelerated method in \citet[Thm. 6.1]{lee2021fast} requires the variance of the stochastic oracle to decrease as $\mathcal O(1/k)$.

\paragraph{Stochastic \& monotone} \rbl{
When more structure is present the story is different since diminishing stepsizes becomes permissible.
In the monotone case rates for the gap function was obtained for stochastic Mirror-Prox in \citet{juditsky2011solving} under bounded domain assumption, which was later relaxed for the extragradient method under additional assumptions \citep{mishchenko2020revisiting}. 
The norm of the operator was shown to asymptotically converge for unconstrained MVIs in \citet{hsieh2020explore} with a double stepsize policy.
There exists a multitude of extensions for monotone problems:
Single-call stochastic methods are covered in detail by \citet{hsieh2019convergence},
variance reduction was applied to Halpern-type iterations \citep{cai2022stochastic}, 
cocoercivity was used in \citet{beznosikov2022stochastic},
and bilinear games studied in \citet{li2022convergence}. 
Beyond monotonicity, a range of structures have been explored such as 
MVIs \citep{song2020optimistic}, 
pseudomonotonicity \citep{kannan2019optimal,boct2021minibatch}, 
two-sided Polyak-\L{}ojasiewicz condition \citep{yang2020global}, 
expected cocoercivity \citep{loizou2021stochastic},
sufficiently bilinear \citep{loizou2020stochastic},
and strongly star-monotone \citep{gorbunov2022stochastic}.}

\paragraph{Variance reduction}
The assumptions we make about the stochastic oracle in \Cref{sec:setup} are similar to what is found in the variance reduction literature (see for instance \citet[Assumption 1]{alacaoglu2021stochastic} or \citet{arjevani2019lower}).
However, our use of the assumption are different in a crucial way.
Whereas the variance reduction literature uses the stepsize $\gamma \propto 1/L_{\hat{F}}$ (see e.g. \citet[Theorem 2.5]{alacaoglu2021stochastic}), we aim at using the much larger $\gamma \propto 1/L_F$.
For instance, in the special case of a finite sum problem of size $N$, the mean square smoothness constant $L_{\hat{F}}$ from \Cref{ass:AsymPrecon:stoch:stocLips} can be $\sqrt{N}$ times larger than $L_F$ (see \Cref{sec:finiteSum} for details).
This would lead to a prohibitively strict requirement on the degree of allowed nonmonotonicity through the relationship $\rho > -\nicefrac{\gamma}{2}$.

\paragraph{Bias-correction}
The idea of adding a correction term has also been exploited in minimization, specifically in the context of compositional optimization \cite{chen2021solving}.
Due to their distinct problem setting it suffices to simply extend stochastic gradient descent (SGD), albeit under additional assumptions such as \citep[Assumption 3]{chen2021solving}.
In our setting, however, SGD is not possible even when restricting ourselves to monotone problems.

\section{Problem formulation and preliminaries}
\label{sec:setup}
We are interested in finding $z \in \R^n$ such that the following inclusion holds,
\begin{equation}\label{eq:StrucIncl}
	0\in Tz := Az + Fz.
\end{equation}
A wide range of machine learning applications can be cast as an inclusion.
Most noticeable, a structured minimax problem can be reduced to \eqref{eq:StrucIncl} as shown in \Cref{sec:pdhg}.
\rbl{
We will rely on common notation and concepts from monotone operators (see \Cref{app:preliminaries} for precise definitions).
}
\begin{ass}\label{ass:AsymPrecon}
In problem \eqref{eq:StrucIncl}, 
\begin{enumerate}
\item \label{ass:AsymPrecon:M:Lip} The operator $F: \R^n\rightarrow\R^n$ is $L_F$-Lipschitz with $L_F \in [0,\infty)$, i.e.,
    \begin{equation}
    \|Fz - Fz'\|
    \leq
    L_F \|z-z'\|
    \quad \forall z,z' \in \R^n.
    \end{equation}
\item\label{ass:A:Struct}
	The operator $A:\R^n\rightrightarrows\R^n$ is a maximally monotone operator.  
\item\label{ass:AsymPrecon:Minty:Struct} Weak Minty variational inequality (MVI) holds, \ie, there exists a nonempty set $\mathcal S^{\star}\subseteq \zer T$ such that for all $z^\star\in \mathcal S^{\star}$ and some $\rho\in(-\tfrac{1}{2L_F}, \infty)$
\begin{equation}
\langle v,z - z^{\star}\rangle \geq \rho\|v\|^2, \quad \text{for all $(z, v)\in \graph T$.}
\end{equation}
\end{enumerate}
\end{ass}
\begin{remark}
In the unconstrained and smooth case ($A\equiv 0$), \Cref{ass:AsymPrecon:Minty:Struct} reduces to $\langle Fz,z - z^{\star}\rangle \geq \rho\|Fz\|^2$ for all $z \in \R^n$.
When $\rho = 0$ this condition reduces \rbl{to the MVI (i.e. star-monotonicity)}, while negative $\rho$ makes the problem increasingly nonmonotone.
Interestingly, the inequality is not symmetric and one may instead consider that the assumption holds for $-F$.
Through this observation, \Cref{app:negF} extends the reach of the extragradient-type algorithms developed for weak MVIs.
\end{remark}

\paragraph{Stochastic oracle}
We assume that we cannot compute $Fz$ easily, but instead we have access to the stochastic oracle $\hat F(z,\xi)$, which we assume is unbiased with bounded variance.
We additionally assume that $z \mapsto \hat F(z,\xi)$ is $L_{\hat F}$ Lipschitz continuous in mean and that it can be simultaneously queried under the same randomness.

\begin{ass}\label{ass:AsymPrecon:stoch}
For the operator $\hat F(\cdot,\xi):\R^n\rightarrow\R^n$ the following holds.
    \begin{enumerate}
        \item \label{ass:AsymPrecon:stoch:multioracle}
            Two-point oracle: 
            The stochastic oracle can be queried for any two points $z,z'\in\R^n$,
            \begin{equation}
            \hat F(z,\xi), \hat F(z',\xi) \quad \text{where} \quad{\xi \sim \mathcal{P}}.
            \end{equation}
        \item \label{ass:AsymPrecon:stoch:unbiased}
            Unbiased:
            \(%
                \mathbb{E}_{\xi}\left[\hat F(z,\xi)\right] = Fz \quad \forall z \in \R^n
            \).%
        \item  \label{ass:AsymPrecon:stoch:boundedvar}
            Bounded variance:
            \(%
                \mathbb{E}_{\xi}\left[\|\hat F(z,\xi)-\hat F(z)\|^2\right] \leq \sigma_F^2  \quad \forall z \in \R^n
            \).%
    \end{enumerate}
\end{ass}
\begin{ass}\label{ass:AsymPrecon:stoch:stocLips} 
The operator $\hat F(\cdot,\xi):\R^n\rightarrow\R^n$ is
Lipschitz continuous in mean with $L_{\hat F} \in [0,\infty)$:
\begin{equation}
\mathbb E_\xi\left[\|\hat F(z,\xi)-\hat F(z',\xi)\|^2\right] \leqslant L_{\hat F}^2\|z-z^{\prime}\|^2 
\quad \text{for all } z,z'\in\R^n.
\end{equation}
\end{ass}

\begin{remark}
\Cref{ass:AsymPrecon:stoch:multioracle,ass:AsymPrecon:stoch:stocLips} are also common in the variance reduction literature \citep{fang2018spider,nguyen2019finite,alacaoglu2021stochastic}, but in contrast with variance reduction we will not necessarily need knowledge of $L_{\hat F}$ to specify the algorithm, in which case the problem constant will only affect the complexity.
Crucially, this decoupling of the stepsize from $L_{\hat F}$ will allow the proposed scheme to converge for a larger range of $\rho$ in \Cref{ass:AsymPrecon:Minty:Struct}.
Finally, note that \Cref{ass:AsymPrecon:stoch:multioracle} commonly holds in machine learning applications, where usually the stochasticity is induced by the sampled mini-batch.
\end{remark}

\section{Method}
\label{sec:method}

To arrive at a stochastic scheme for weak MVI we first need to understand the crucial ingredients in the deterministic setting.
For simplicity we will initially consider the unconstrained and smooth setting, i.e. $A\equiv 0$ in \eqref{eq:StrucIncl}.
The first component is taking the second stepsize $\alpha$ smaller as done in extragradient+ \eqref{eq:eg+},
\begin{equation}
\label{eq:eg+}
\tag{EG+}
\begin{split}
\bar{z}^k &= z^k - \gamma Fz^k \\
z^{k+1} &= z^k - \alpha \gamma F\bar{z}^k
\end{split}
\end{equation}
where $\alpha\in(0,1)$.
Convergence in weak MVI was first shown in \cite{diakonikolas2021efficient} and later tightened by \cite{pethick2022escaping}, who characterized that smaller $\alpha$ allows for a larger range of the problem constant $\rho$.
Taking $\alpha$ small is unproblematic for a stochastic scheme where usually the stepsize is taken diminishing regardless.

However,
\cite{pethick2022escaping} also showed that the extrapolation stepsize $\gamma$ plays a critical role for convergence under weak MVI.
Specifically, they proved that a larger stepsize $\gamma$ leads to a looser bound on the problem class through $\rho>-\gamma/2$. \rbl{While a lower bound has not been established we provide an example in \Cref{fig:SEG+:counterexample} of \Cref{app:experiments} where small stepsize prevents convergence.} %
Unfortunately, picking $\gamma$ large (e.g. as $\gamma=\nicefrac{1}{L_F}$) causes significant complications in the stochastic case where both stepsizes are usually taken to be diminishing as in the following scheme,
\begin{equation}
\label{eq:seg}
\tag{SEG}
\begin{split}
\bar{z}^k &= z^k - \beta_k \gamma \hat F(z^k, \xi_k) \ \quad \text{with}\quad \xi_k \sim \mathcal{P}  \\
z^{k+1} &= z^k - \alpha_k \gamma \hat F(\bar{z}^k, \bar{\xi}_k) \quad \text{with}\quad \bar{\xi}_k \sim \mathcal{P}  \\
\end{split}
\end{equation}
where $\alpha_k = \beta_k \propto \nicefrac{1}{k}$. 
Even with a two-timescale variant (when $\beta_k>\alpha_k$) it has only been possible to show convergence for MVI \rbl{(i.e. when $\rho = 0$)} \citep{hsieh2020explore}.
Instead of decreasing both stepsizes, \cite{pethick2022escaping} proposes a scheme that keeps the first stepsize constant,
\begin{equation}
\label{eq:seg+}
\tag{SEG+}
\begin{split}
\bar{z}^k &= z^k - \gamma \hat F(z^k, \xi_k) \ \quad \text{with}\quad \xi_k \sim \mathcal{P}  \\
z^{k+1} &= z^k - \alpha_k \gamma \hat F(\bar{z}^k, \bar{\xi}_k) \quad \text{with}\quad \bar{\xi}_k \sim \mathcal{P}  \\
\end{split}
\end{equation}
However, \eqref{eq:seg+} does not necessarily converge even in the monotone case as we illustrate in \Cref{fig:monotone}.
The non-convergence stems from the bias term introduced by the randomness of $\bar z^k$ in $\hat F(\bar z^k, \bar{\xi}_k)$.
Intuitively, the role of $\bar z^k$ is to approximate the deterministic exploration step $\tilde{\bar z}^k:=z^k-\gamma Fz^k$.
While $\bar z^k$ is an unbiased estimate of $\tilde{\bar z}^k$ this does not imply that $\hat F(\bar z^k, \bar{\xi}_k)$ is an unbiased estimate of $F(\tilde{\bar z}^k)$.
Unbiasedness only holds in special cases, such as when $F$ is linear and $A\equiv 0$ for which we show convergence of \eqref{eq:seg+} in \Cref{sec:seg+} under weak MVI.
In the monotone case it suffice to take the exploration stepsize $\gamma$ diminishing \citep[Thm. 1]{hsieh2020explore}, but this runs counter to the fixed stepsize requirement of weak MVI.
\looseness=-1

Instead we propose \emph{bias-corrected stochastic extragradient+} (BC-SEG+) in \Cref{alg:WeakMinty:Sto:Struct}.
BC-SEG+ adds a bias correction term of the previous operator evaluation using the current randomness $\xi_k$. 
This crucially allows us to keep the first stepsize fixed.
We further generalize this scheme to constrained and regularized setting with \Cref{alg:WeakMinty:Sto:StructA} by introducing the use of the resolvent, $(\id + \gamma A)^{-1}$.

\begin{algorithm}[t]
    \caption{(BC-SEG+) Stochastic algorithm for problem \eqref{eq:StrucIncl} when $A\equiv 0$}%
    \label{alg:WeakMinty:Sto:Struct}%
    
\begin{algorithmic}[1]
	\Require
		\(z^{-1} = \bar z^{-1} = z^0  \in\R^n\)
		$\alpha_k \in (0,1)$,
		$\gamma \in(\lfloor-2 \rho\rfloor_{+}, 1 / L_F)$

\item[\algfont{Repeat} for \(k=0,1,\ldots\) until convergence]

\State\label{state:FZ:sto:Smooth}%
	Sample $\xi_k\sim \mathcal{P}$
\State\label{state:barzL:sto:Smooth}%
	\(
		\bar z^k = z^k - \gamma \hat F(z^k, \xi_k) + (1-\betaparam_k) \big( \bar z^{k-1} - z^{k-1} + \gamma \hat F(z^{k-1}, \xi_k) \big)
	\) 
\State\label{state:Fbarz:Smooth}%
	Sample $\bar\xi_k\sim \mathcal{P}$
\State\label{state:z+:sto:Smooth}%
	\(
		z^{k+1} = z^k - \alpha_k \gamma \hat F(\bar z^k, \bar \xi_k) 
	\)

\item[\algfont{Return}]
	\(z^{k+1}\)
\end{algorithmic}

\end{algorithm}

\section{Analysis of SEG+}
\label{sec:seg+}
\rbl{In the special case where $F$ is affine and $A \equiv 0$ we can show convergence of \eqref{eq:seg+} under weak MVI up to arbitrarily precision even with a large stepsize $\gamma$.}
\begin{thm} 
\label{thm:SEG+}
Suppose that \cref{ass:AsymPrecon,ass:AsymPrecon:stoch} hold. 
Assume $Fz := Bz+v$ and choose $\alpha_k \in (0,1)$ and $\gamma \in (0,1/L_F)$ such that $\rho \geq \gamma(\alpha_k - 1)/2$.
Consider the sequence $\seq{z^k}$ generated by \eqref{eq:seg+}.
Then for all $z^\star \in \mathcal S^\star$,
\begin{equation}
\begin{aligned}
\sum^{K}_{k=0}\tfrac{\alpha_k}{\sum_{j=0}^K \alpha_j} \mathbb E \|Fz^k\|^{2} &\leq
\tfrac{\|z^{0}-z^{\star}\|^{2} + \gamma^2(\gamma^2L_F^2 + 1) \sigma^2_F \sum_{j=0}^K \alpha_j^2 }{\gamma^2(1-\gamma^2L_F^2) \sum_{j=0}^K \alpha_j}.
\end{aligned}
\end{equation}
\end{thm}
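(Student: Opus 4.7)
The plan is to mimic the standard deterministic extragradient+ analysis, using the affine assumption only at one critical point to handle the bias of $\hat F(\bar z^k,\bar\xi_k)$ when $\bar z^k$ is random.

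First, I expand the distance to a solution:
\begin{equation*}
\|z^{k+1}-z^\star\|^2 = \|z^k-z^\star\|^2 - 2\alpha_k\gamma\langle \hat F(\bar z^k,\bar\xi_k),\, z^k-z^\star\rangle + \alpha_k^2\gamma^2\|\hat F(\bar z^k,\bar\xi_k)\|^2.
\end{equation*}
Taking conditional expectation over $\bar\xi_k$ (with $\bar z^k$ fixed) replaces $\hat F(\bar z^k,\bar\xi_k)$ by $F\bar z^k$ in the inner product and contributes an $\alpha_k^2\gamma^2\sigma_F^2$ residual on the squared-norm term via \Cref{ass:AsymPrecon:stoch:unbiased,ass:AsymPrecon:stoch:boundedvar}.

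Next I use the affine structure. Setting $\tilde{\bar z}^k := z^k - \gamma Fz^k$ and $\epsilon^k := \hat F(z^k,\xi_k) - Fz^k$, affineness of $F$ gives $F\bar z^k = F\tilde{\bar z}^k - \gamma B \epsilon^k$. Hence, conditionally on the past,
\begin{equation*}
\E_{\xi_k}[F\bar z^k] = F\tilde{\bar z}^k, \qquad \E_{\xi_k}\|F\bar z^k\|^2 \le \|F\tilde{\bar z}^k\|^2 + \gamma^2 L_F^2\sigma_F^2,
\end{equation*}
where the latter uses $\|B\|\le L_F$ and the variance bound. Combined with the previous step this yields
\begin{equation*}
\E[\|z^{k+1}-z^\star\|^2\mid\mathcal F_k] \le \|z^k-z^\star\|^2 - 2\alpha_k\gamma\langle F\tilde{\bar z}^k,\,z^k-z^\star\rangle + \alpha_k^2\gamma^2\|F\tilde{\bar z}^k\|^2 + \alpha_k^2\gamma^2(1+\gamma^2 L_F^2)\sigma_F^2.
\end{equation*}
This is now exactly the deterministic EG+ recursion applied at $\tilde{\bar z}^k$, plus an additive $\mathcal O(\alpha_k^2)$ noise term.

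For the deterministic part I would split $\langle F\tilde{\bar z}^k,z^k-z^\star\rangle = \langle F\tilde{\bar z}^k,\tilde{\bar z}^k-z^\star\rangle + \gamma\langle F\tilde{\bar z}^k, Fz^k\rangle$, apply the weak MVI inequality to the first piece to get $\rho\|F\tilde{\bar z}^k\|^2$, and use $2\langle F\tilde{\bar z}^k,Fz^k\rangle \ge \|F\tilde{\bar z}^k\|^2 + (1-\gamma^2L_F^2)\|Fz^k\|^2 - L_F^2\|\tilde{\bar z}^k - z^k\|^2$ (with $\tilde{\bar z}^k - z^k = -\gamma Fz^k$). Collecting the coefficients of $\|F\tilde{\bar z}^k\|^2$ produces $\gamma^2\alpha_k\bigl(\alpha_k - 1 - 2\rho/\gamma\bigr)$, which is nonpositive exactly under the hypothesis $\rho \ge \gamma(\alpha_k-1)/2$, so this term can be dropped, leaving
\begin{equation*}
\E[\|z^{k+1}-z^\star\|^2\mid\mathcal F_k] \le \|z^k-z^\star\|^2 - \gamma^2\alpha_k(1-\gamma^2 L_F^2)\|Fz^k\|^2 + \alpha_k^2\gamma^2(1+\gamma^2 L_F^2)\sigma_F^2.
\end{equation*}
Taking total expectation, telescoping from $k=0$ to $K$, and dividing by $\gamma^2(1-\gamma^2L_F^2)\sum_j\alpha_j$ gives exactly the claimed bound.

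The only delicate step is passing to $\E_{\xi_k}[F\bar z^k] = F\tilde{\bar z}^k$; this is where the affine assumption is indispensable, since for a general Lipschitz $F$ one only controls $\|F\bar z^k - F\tilde{\bar z}^k\|$ but not the bias of $F\bar z^k$ as an estimator of $F\tilde{\bar z}^k$ — precisely the obstruction that motivates BC-SEG+ in the remainder of the paper. The rest is bookkeeping: choosing where to apply Young/Cauchy--Schwarz so that the $\|F\tilde{\bar z}^k\|^2$ coefficient cancels under the stepsize condition, and verifying that the remaining variance inflation factor matches $\gamma^2(\gamma^2L_F^2+1)\sigma_F^2$.
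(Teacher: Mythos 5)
Your proposal is correct and takes essentially the same route as the paper's own proof: both introduce the deterministic reference point $\tilde{\bar z}^k = z^k-\gamma Fz^k$, use affinity of $F$ to write $\hat F(\bar z^k,\bar\xi_k)$ (resp.\ $F\bar z^k$) as $F\tilde{\bar z}^k$ plus zero-mean noise bounded via $\|B\|\le L_F$, apply the weak MVI at $\tilde{\bar z}^k$, and use the polarization identity with $\|F\tilde{\bar z}^k-Fz^k\|\le \gamma L_F\|Fz^k\|$ so that the stepsize condition $\rho\ge\gamma(\alpha_k-1)/2$ kills the $\|F\tilde{\bar z}^k\|^2$ coefficient, yielding the same descent inequality with noise constant $\gamma^2(\gamma^2L_F^2+1)\sigma_F^2$ before telescoping. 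The only blemish is the displayed bound on $2\langle F\tilde{\bar z}^k,Fz^k\rangle$, which redundantly keeps both the $-L_F^2\|\tilde{\bar z}^k-z^k\|^2$ term and the already-substituted coefficient $(1-\gamma^2L_F^2)$; your subsequent coefficient collection uses the correct form, so this is a writing slip rather than a gap.
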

The underlying reason for this positive results is that $\hat F(\bar z^k,\bar\xi_k)$ is unbiased when $F$ is linear.
This no longer holds when either linearity of $F$ is dropped or when the resolvent is introduced for $A \not\equiv 0$,
\rbl{in which case the scheme only converges to a $\gamma$-dependent neighborhood as illustrated in \Cref{fig:monotone}.
This is problematic in weak MVI where $\gamma$ cannot be taken arbitrarily small (see \Cref{fig:SEG+:counterexample} of \Cref{app:experiments}).}

\begin{figure}[t]
\vspace{-1.5em}
\centering
\includegraphics[width=0.5\textwidth]{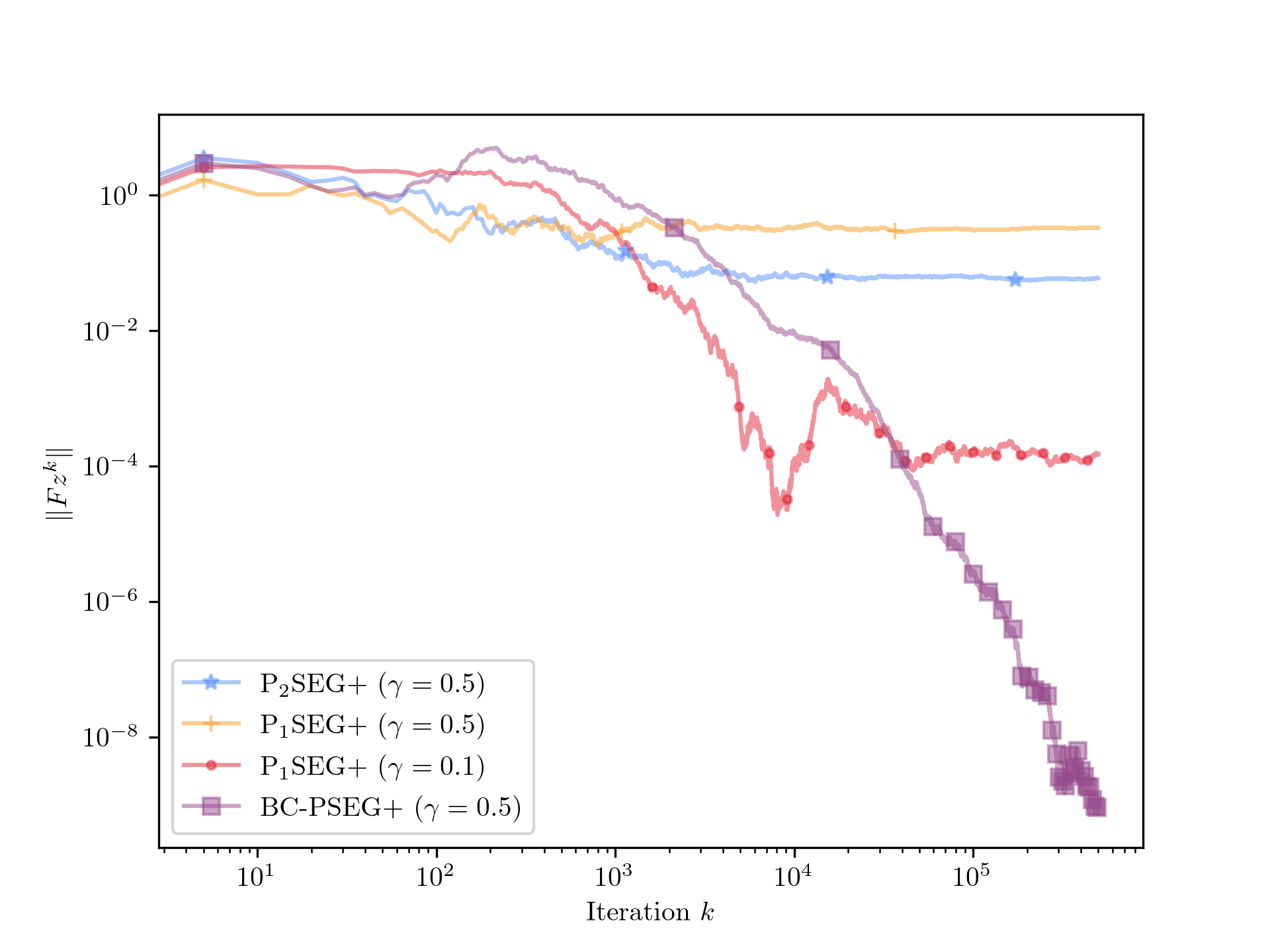}%
\includegraphics[width=0.5\textwidth]{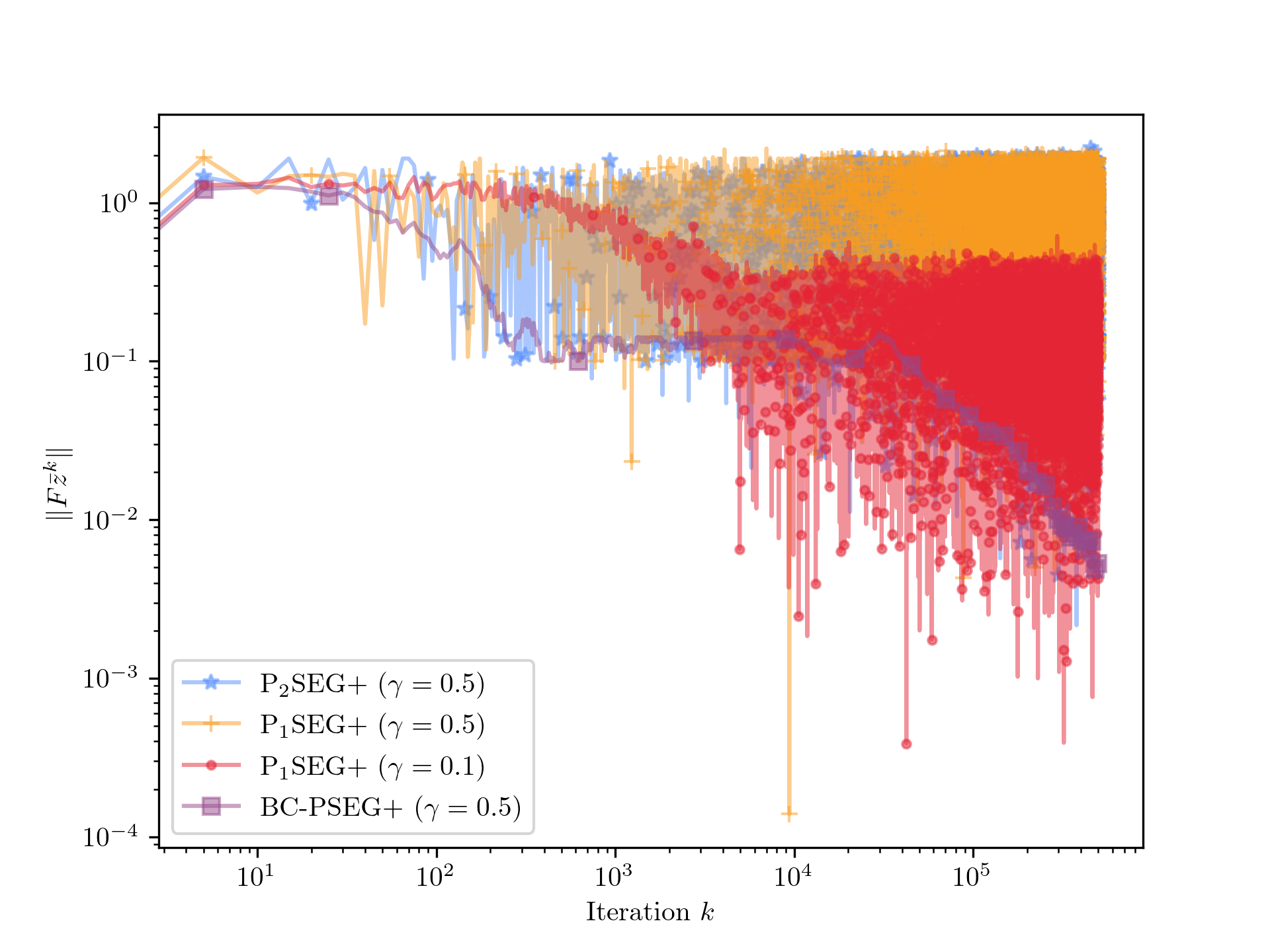}
\caption{Monotone constrained case illustrating the issue for projected variants of \eqref{eq:seg+} (see \Cref{app:algo} for algorithmic details). The objective is bilinear $\phi(x,y)=(x-0.9)\cdot(y-0.9)$ under box constraints $\|(x,y)\|_\infty\leq 1$. The unique stationary point $(x^\star,y^\star)=(0.9,0.9)$ lies in the interior, so even $\|Fz\|$ can be driven to zero.
\rbl{Despite the simplicity of the problem both projected variants of \eqref{eq:seg+} only converges to a $\gamma$-dependent neighborhood.
For weak MVI with $\rho < 0$ this neighborhood cannot be made arbitrarily small since $\gamma$ cannot be taken arbitrarily small (see \Cref{fig:SEG+:counterexample} of \Cref{app:experiments}).}
}
\label{fig:monotone}
\vspace{-1.5em}
\end{figure}

\section{Analysis for unconstrained and smooth case}
\label{sec:analysis}

For simplicity we first consider the case where $A\equiv 0$.
To mitigate the bias introduced in $F(\bar z^k,\bar \xi_k)$ for \eqref{eq:seg+}, we propose \Cref{alg:WeakMinty:Sto:Struct} which modifies the exploration step.
The algorithm can be seen as a particular instance of the more general scheme treated in \Cref{sec:analysis:const}.

\begin{thm}\label{thm:BiasCorr:2}
     Suppose that \cref{ass:AsymPrecon,ass:AsymPrecon:stoch,ass:AsymPrecon:stoch:stocLips} hold.
     Suppose in addition that \(\gamma\in (\lfloor-2\rho\rfloor_+, \nicefrac1{L_F})\) and \(\seq{\alpha_k}\subset(0,1)\) is a diminishing sequence such that 
     \begin{align}\label{eq:condest:Ae0}
         2\gamma L_{\hat{F}}\sqrt{\alpha_{0}}+\Big(1+\big(\tfrac{1+\gamma^2L_F^2}{1-\gamma^2L_F^2}\gamma^{2}L_{F}^{2}\big)\gamma^{2}L_{\hat{F}}^{2}\Big)\alpha_{0}\leq1+\tfrac{2\rho}{\gamma}.
     \end{align}
    Then, the following estimate holds for all $z^\star \in \mathcal S^\star$
\begin{align}
\label{thm:BiasCorr:2:rate}
\mathbb E[\|F(z^{k_\star})\|^2] 
    {}\leq{}
\frac{%
    (1+\eta \gamma^2L_F^2) \|z^{0} - z^\star\|^2 
        {}+{}
    C \sigma_F^2\gamma^2 \sum_{j=0}^K \alpha_j^2
     }{%
        \mu \sum_{j=0}^K \alpha_j
    }
\end{align}
where \(C = 1+2\eta\big((\gamma^{2}L_{\hat{F}}^{2}+1)+2\alpha_{0}\big)\), 
$\eta=\tfrac{1}{2}\tfrac{1+\gamma^2L_F^2}{1-\gamma^2L_F^2}\gamma^{2}L_{F}^{2}+\tfrac{1}{\gamma L_{\hat{F}}\sqrt{\alpha_{0}}}$, $\mu = \gamma^2(1-\gamma^2L_F^2)/2$
and 
$k_{\star}$ is chosen from $\{0,1, \ldots, K\}$ according to probability $\mathcal{P}\left[k_{\star}=k\right]=\frac{\alpha_k}{\sum_{j=0}^K \alpha_j}$.
 \end{thm}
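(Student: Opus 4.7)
My plan is to run a standard extragradient-style Lyapunov argument, but with an additional error term tracking how far the randomized exploration iterate $\bar z^k$ deviates from its deterministic idealization $\tilde{\bar z}^k := z^k - \gamma F z^k$. Concretely, define the bias variable
\begin{equation*}
    e_k := \bar z^k - (z^k - \gamma F z^k),
    \qquad
    \delta_k(z) := \hat F(z,\xi_k) - Fz,
\end{equation*}
and Lyapunov function $V_k := \|z^k - z^\star\|^2 + \eta\, \mathbb{E}\|e_{k-1}\|^2$, with $\eta$ as in the statement. The proof proceeds in three blocks: (i) a descent inequality for $\|z^k - z^\star\|^2$; (ii) a recursion for $\mathbb{E}\|e_k\|^2$; (iii) combining them into a recursion for $V_k$ and summing.

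For (i), expanding $\|z^{k+1}-z^\star\|^2$ via the update in \cref{state:z+:sto:Smooth} and taking expectation over $\bar\xi_k$ (using \cref{ass:AsymPrecon:stoch:unbiased,ass:AsymPrecon:stoch:boundedvar}) yields
\begin{equation*}
\mathbb{E}\|z^{k+1}-z^\star\|^{2} \leq \mathbb{E}\|z^k-z^\star\|^{2} - 2\alpha_k\gamma\, \mathbb{E}\langle F\bar z^k,z^k-z^\star\rangle + \alpha_k^2\gamma^2 \mathbb{E}\|F\bar z^k\|^2 + \alpha_k^2 \gamma^2 \sigma_F^2 .
\end{equation*}
Apply weak MVI at $\bar z^k$ to split $\langle F\bar z^k,z^k-z^\star\rangle = \rho \|F\bar z^k\|^2 + \langle F\bar z^k, z^k-\bar z^k\rangle$, and rewrite $z^k - \bar z^k = \gamma Fz^k + (-e_k)$. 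The inner product $\langle F\bar z^k, Fz^k\rangle$ is then handled by the standard polarization identity combined with the Lipschitz bound $\|F\bar z^k - Fz^k\|^2 \leq L_F^2\|z^k-\bar z^k\|^2$ and a Young split of $\|z^k-\bar z^k\|^2 = \|\gamma Fz^k + e_k\|^2$; similarly $\langle F\bar z^k, e_k\rangle$ is Young-split with a parameter proportional to $1/(\gamma L_{\hat F}\sqrt{\alpha_0})$. The ratio $(1+\gamma^2L_F^2)/(1-\gamma^2L_F^2)$ appearing in $\eta$ is the residue of optimizing the first Young parameter so that the coefficient of $\|Fz^k\|^2$ remains $\gamma^2(1-\gamma^2L_F^2)/2$ as reported in $\mu$.

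For (ii), a direct computation from \cref{state:barzL:sto:Smooth} gives the clean error dynamics
\begin{equation*}
    e_k = (1-\alpha_k)\, e_{k-1} - \gamma\, \delta_k(z^k) + (1-\alpha_k)\, \gamma\, \delta_k(z^{k-1}).
\end{equation*}
Since $\xi_k$ is independent of everything generated previously and $z^k,z^{k-1},e_{k-1}$ are measurable with respect to the filtration up to $\bar\xi_{k-1}$, the cross term with $e_{k-1}$ vanishes in conditional expectation. Rearranging $-\gamma\delta_k(z^k) + (1-\alpha_k)\gamma\delta_k(z^{k-1}) = -\gamma[\delta_k(z^k)-\delta_k(z^{k-1})] - \alpha_k\gamma\delta_k(z^{k-1})$ and applying \cref{ass:AsymPrecon:stoch:stocLips} to the first piece (for which $\mathbb E\|\delta_k(z^k)-\delta_k(z^{k-1})\|^2 \leq L_{\hat F}^2 \|z^k-z^{k-1}\|^2$) and \cref{ass:AsymPrecon:stoch:boundedvar} to the second produces, after a Young split,
\begin{equation*}
    \mathbb{E}\|e_k\|^2 \leq (1-\alpha_k)^2\, \mathbb{E}\|e_{k-1}\|^2 + (1+\theta)\gamma^2 L_{\hat F}^2\, \mathbb{E}\|z^k-z^{k-1}\|^2 + (1+\tfrac{1}{\theta})\alpha_k^2\gamma^2\sigma_F^2 ,
\end{equation*}
where $\|z^k - z^{k-1}\|^2 = \alpha_{k-1}^2\gamma^2\|\hat F(\bar z^{k-1},\bar\xi_{k-1})\|^2$, whose expectation is $\alpha_{k-1}^2\gamma^2(\|F\bar z^{k-1}\|^2 + \sigma_F^2)$.

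Combining (i) and (ii) and choosing $\eta$ exactly as stated makes the $\|e_k\|^2$ contributions telescope across $k$, and the residual coefficients of $\|F\bar z^{k-1}\|^2$ from the error recursion feed back into the descent inequality. The stepsize condition \eqref{eq:condest:Ae0} is exactly what guarantees that, after telescoping, the coefficient of $\|F\bar z^k\|^2$ (and, through the weak MVI $\rho>-\gamma/2$ range, of $\|Fz^k\|^2$) remains nonnegative; the linear-in-$\sqrt{\alpha_0}$ term in \eqref{eq:condest:Ae0} is precisely the cost of the Young split on $\langle F\bar z^k, e_k\rangle$. Summing, telescoping, and normalizing by $\sum_j \alpha_j$ then picking $k_\star$ from the stated distribution converts the cumulative bound into \eqref{thm:BiasCorr:2:rate}. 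The main obstacle is the bookkeeping in this last step: the feedback loop $\|e_k\|^2 \to \|F\bar z^{k-1}\|^2 \to \|z^k - z^\star\|^2$ must be tuned so that the inflated noise factor $C = 1 + 2\eta((\gamma^2 L_{\hat F}^2 + 1) + 2\alpha_0)$ absorbs the $\sigma_F^2$ contributions from both (i) and (ii) simultaneously while the progress term $\mu \alpha_k \|Fz^k\|^2$ remains intact, and this constraint is what pins down both the form of $\eta$ and the shape of \eqref{eq:condest:Ae0}.
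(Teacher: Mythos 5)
Your proposal follows essentially the same route as the paper's proof: the same bias-error variable (your $e_k$ is the paper's $u^k$), the same error recursion exploiting the fresh-sample cross-term cancellation and the Lipschitz-in-mean bound, the same weak-MVI/Young decomposition of the descent step whose parameters produce $\mu$, $\eta$ and condition \eqref{eq:condest:Ae0}, and the same random-iterate conversion at the end. The only (cosmetic) difference is that the paper carries an additional potential term $B_k\|z^k-z^{k-1}\|^2$, whereas you substitute $\|z^k-z^{k-1}\|^2=\alpha_{k-1}^2\gamma^2\|\hat F(\bar z^{k-1},\bar\xi_{k-1})\|^2$ directly and let the resulting $\|F\bar z^{k-1}\|^2$ term feed back into the previous step's descent inequality, which amounts to the same bookkeeping.
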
 
\begin{rem}
As $\alpha_0 \rightarrow 0$, the requirement \eqref{eq:condest:Ae0} reduces to $\rho > -\nicefrac{\gamma}{2}$ as in the deterministic setting of \citet{pethick2022escaping}.
Letting $\alpha_k = \nicefrac{\alpha_0}{\sqrt{k+r}}$ the rate becomes $\mathcal O(\nicefrac{1}{\sqrt{k}})$, thus matching the rate for the gap function of stochastic extragradient in the monotone case (see e.g. \citet{juditsky2011solving}).
\end{rem}

The above result provides a rate for a \emph{random iterate} as pioneered by \citet{ghadimi2013stochastic}. 
Showing \emph{last iterate} results even asymptotically is more challenging.
Already in the monotone case, vanilla \eqref{eq:seg} (where $\beta_k=\alpha_k$) only has convergence guarantees for the average iterate \citep{juditsky2011solving}.
In fact, the scheme can cycle even in simple examples \citep[Fig. 1]{hsieh2020explore}.

Under the classical (but more restrictive) Robbins-Monro stepsize policy, it is possible to show almost sure convergence for the iterates generates by \Cref{alg:WeakMinty:Sto:Struct}.
The following theorem demonstrates the result in the particular case of \(\alpha_k = \nicefrac{1}{k+r}\).
The more general statement is deferred to \Cref{app:smooth}.

\begin{thm}[almost sure convergence]\label{thm:BiasCorr}
Suppose that \cref{ass:AsymPrecon,ass:AsymPrecon:stoch,ass:AsymPrecon:stoch:stocLips} hold. 
Suppose \(\gamma\in (\lfloor-2\rho\rfloor_+, \nicefrac1{L_F})\), 
\(\alpha_k = \tfrac{1}{k+r}\) for any positive natural number \(r\) and
\begin{equation}\label{eq:BiasCorr:requirements}
    (\gamma L_{\hat{F}}+1)\alpha_{k}+\rbl{2}\left(\tfrac{1+\gamma^2L_F^2}{1-\gamma^2L_F^2}\gamma^{4}\rbl{L_{F}^{2}L_{\hat F}^{2}}\alpha_{k+1}+\gamma L_{\hat{F}}\right)\left(\alpha_{k+1}+1\right)\alpha_{k+1} 
        {}\leq{}
    1 + \tfrac{2\rho}{\gamma}.
\end{equation}
Then, the sequence \(\seq{z^k}\) generated by \Cref{alg:WeakMinty:Sto:Struct} converges almost surely to some \(z^\star\in \zer T\). 

\end{thm}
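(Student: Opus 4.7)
The plan is to turn the one-step descent inequality underlying \Cref{thm:BiasCorr:2} into a Robbins--Siegmund compatible conditional supermartingale inequality, and then combine this with a demiclosedness argument to upgrade the random-iterate rate into almost sure last-iterate convergence to a single element of $\zer T$.

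First I would introduce a Lyapunov function $V_k$, measurable with respect to the natural filtration $\{\mathcal F_k\}$ of the algorithm's randomness, built around $\|z^k - z^\star\|^2$ and augmented with auxiliary terms that control the memory introduced by the correction on line~\ref{state:barzL:sto:Smooth} of \Cref{alg:WeakMinty:Sto:Struct}. Conditioning on $\mathcal F_k$ and expanding $\|z^{k+1} - z^\star\|^2$ using the update rule, I would invoke unbiasedness and bounded variance (\Cref{ass:AsymPrecon:stoch}), the mean-square Lipschitz bound (\Cref{ass:AsymPrecon:stoch:stocLips}), and the weak Minty inequality (\Cref{ass:AsymPrecon}(\ref{ass:AsymPrecon:Minty:Struct})) to obtain a recursion of the form
\begin{equation*}
\mathbb{E}[V_{k+1}\mid \mathcal F_k] \leq V_k - \mu \alpha_k \|Fz^k\|^2 + c_k,
\end{equation*}
where $\mu > 0$ is the constant from \Cref{thm:BiasCorr:2} and $c_k$ is almost surely summable because $\sum_k \alpha_k^2 < \infty$ under the Robbins--Monro schedule $\alpha_k = 1/(k+r)$. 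Condition~\eqref{eq:BiasCorr:requirements} is precisely what guarantees that the $\|Fz^k\|^2$ coefficient stays strictly positive \emph{uniformly in $k$} after absorbing all cross terms introduced by the bias correction; this is more demanding than the corresponding condition~\eqref{eq:condest:Ae0} behind \Cref{thm:BiasCorr:2}, which only needs to hold at $k=0$.

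Applying the Robbins--Siegmund lemma then yields two almost sure conclusions: (i) $V_k$, and hence $\|z^k - z^\star\|^2$, converges to a finite limit for every $z^\star \in \mathcal S^\star$; and (ii) $\sum_k \alpha_k \|Fz^k\|^2 < \infty$. Combined with $\sum_k \alpha_k = \infty$, (ii) forces $\liminf_k \|Fz^k\| = 0$. By (i) the iterates are bounded and therefore admit an accumulation point $z^\infty$; Lipschitz continuity of $F$ together with closedness of $\graph T$ (via maximal monotonicity of $A$) yields $0 \in T z^\infty$, so $z^\infty \in \zer T$. A standard Opial-type argument---applying (i) with $z^\star$ replaced by $z^\infty$---then rules out multiple accumulation points and delivers $z^k \to z^\infty$ almost surely.

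The main obstacle is managing the coupling between $\bar z^k$ and the current sample $\xi_k$. The two-point oracle reuses the same $\xi_k$ both in $\hat F(z^k,\xi_k)$ and in the correction $\hat F(z^{k-1},\xi_k)$, so the conditional variance of $\bar z^k$ must be controlled via \Cref{ass:AsymPrecon:stoch:stocLips}, which bounds $\mathbb{E}\|\hat F(z^k,\xi_k) - \hat F(z^{k-1},\xi_k)\|^2$ by $L_{\hat F}^2\|z^k - z^{k-1}\|^2$. Through the update rule, this quantity itself scales like $\alpha_{k-1}^2\gamma^2\|\hat F(\bar z^{k-1},\bar\xi_{k-1})\|^2$, feeding the memory term back into the Lyapunov function. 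The delicate bookkeeping that absorbs this feedback without shrinking $\mu$ to zero is precisely what produces condition~\eqref{eq:BiasCorr:requirements}, and is the heart of the argument.
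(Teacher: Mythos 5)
Your proposal follows essentially the same route as the paper's proof: a potential function of the form $\|z^k-z^\star\|^2 + A_k\|u^k\|^2 + B_k\|z^k-z^{k-1}\|^2$ with $u^k$ the deviation of the bias-corrected exploration step from its deterministic counterpart, a conditional descent inequality with summable error, the Robbins--Siegmund supermartingale theorem, and the standard boundedness/accumulation-point argument to identify a single limit in $\zer T$. One minor mischaracterization: in the paper, condition \eqref{eq:BiasCorr:requirements} serves to make the coefficient of $\mathbb E[\|F\bar z^k\|^2\mid\mathcal F_k]$ nonpositive after the time-varying weight $A_{k+1}$ is obtained by solving a linear recursion exactly (and verifying summability of the resulting series for $\alpha_k = 1/(k+r)$), whereas the strictly negative drift in $\|Fz^k\|^2$ comes from $\gamma < 1/L_F$ through the constant $\mu$; this does not change the substance of your plan.
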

\begin{rem}
As $\alpha_k \rightarrow 0$ the condition on $\rho$ reduces to $\rho > - \nicefrac{\gamma}{2}$ like in the deterministic case.
\end{rem}

To make the results more accessible, both theorems have made particular choices of the free parameters from the proof, that ensures convergence for a given $\rho$ and $\gamma$.
However, since the parameters capture inherent tradeoffs, the choice above might not always provide the tightest rate.
Thus, the more general statements of the theorems have been preserved in the appendix.

\section{Analysis for constrained case}
\label{sec:analysis:const}

The result for the unconstrained smooth case can be extended when the resolvent is available.
\Cref{alg:WeakMinty:Sto:StructA} provides a direct generalization of the unconstrained \Cref{alg:WeakMinty:Sto:Struct}.
The construction relies on approximating the deterministic algorithm proposed in \cite{pethick2022escaping}, which iteratively projects onto a half-space which is guaranteed to contain the solutions.
By defining $Hz = z - \gamma Fz$, the scheme can concisely be written as,
\begin{equation}\label{eq:ceg+}
\tag{CEG+}
\begin{split}
\bar z^k &= (I + \gamma A)^{-1}(Hz^k) \\
z^{k+1} &= z^k - \alpha_k (Hz^k - H\bar z^k),
\end{split}
\end{equation}
for a particular adaptive choice of $\alpha_k \in (0,1)$.
With a fair amount of hindsight we choose to replace $Hz^k$ with the bias-corrected estimate $h^k$ (as defined in \Cref{state:h:sto:Struct} in \Cref{alg:WeakMinty:Sto:StructA}), such that the estimate is also reused in the second update.

\begin{thm}\label{thm:const:convergence}
    Suppose that \cref{ass:AsymPrecon,ass:AsymPrecon:stoch,ass:AsymPrecon,ass:AsymPrecon:stoch:stocLips} hold. 
    Moreover, suppose that \(\alpha_k\in(0,1)\), \(\gamma\in (\lfloor-2\rho\rfloor_+, \nicefrac1{L_F})\) and the following holds,
    \begin{equation}\label{thm:const:convergence:conditions}
    \mu \coloneqq \tfrac{1-\sqrt{\alpha_0}}{1+\sqrt{\alpha_0}} - \alpha_0(1+2\gamma^2L_{\hat F}^2 \eta) + \tfrac{2\rho}{\gamma} > 0
    \end{equation}
    where $\eta \geq \tfrac{1}{\sqrt{\alpha_0}(1-\gamma^2 L_F^2)} + \tfrac {1-\sqrt{\alpha_0}}{\sqrt{\alpha_0}}$. 
    Consider the sequence \(\seq{z^k}\) generated by \Cref{alg:WeakMinty:Sto:StructA}.
    Then, the following estimate holds for all $z^\star \in \mathcal S^\star$ 
    \begin{align*}
        \rbl{
        \mathbb E[\dist(0,T\z^{k_\star})^2]
        }
            {}\leq{}
        \frac{\mathbb E[\|z^{0}-z^{\star}\|^{2}] + \eta \mathbb E[\|h^{-1}-Hz^{-1}\|^{2}] + C\gamma^2 \sigma_F^2\sum_{j=0}^K \alpha_{j}^{2}}{\gamma^2\mu \sum_{j=0}^K \alpha_{j}}
    \end{align*}
    where $C = 1 + 2\eta(1+\gamma^2L_{\hat F}^2) + 2\alpha_0 \eta$ and
$k_{\star}$ is chosen from $\{0,1, \ldots, K\}$ according to probability $\mathcal{P}\left[k_{\star}=k\right]=\frac{\alpha_k}{\sum_{j=0}^K \alpha_j}$.
\end{thm}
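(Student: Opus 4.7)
The plan is to adapt the deterministic Lyapunov analysis of \eqref{eq:ceg+} from \citet{pethick2022escaping}, augmented with a tracking term that controls the bias of $h^k$ relative to $Hz^k$. Writing $e^k := h^k - Hz^k$, I propose the Lyapunov function
\[
V_k := \|z^k - z^\star\|^2 + \eta\,\|e^{k-1}\|^2,
\]
so that the term $\eta\|h^{-1}-Hz^{-1}\|^2$ in the target bound comes out naturally after telescoping and dividing by $\gamma^2\mu\sum \alpha_j$. The lower bound $\eta \geq \tfrac{1}{\sqrt{\alpha_0}(1-\gamma^2 L_F^2)} + \tfrac{1-\sqrt{\alpha_0}}{\sqrt{\alpha_0}}$ is chosen precisely to absorb the cross terms produced by Young's inequality with balance parameter $\sqrt{\alpha_0}$.

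First I would expand $\|z^{k+1}-z^\star\|^2$ using the update rule. The key algebraic identity comes from the resolvent step: since $\bar z^k = (I+\gamma A)^{-1}(h^k)$, there exists $u^k \in A\bar z^k$ with $\gamma u^k = h^k - \bar z^k$, so $g^k := u^k + F\bar z^k \in T\bar z^k$. Rewriting the update in terms of $g^k$ and invoking weak MVI \Cref{ass:AsymPrecon:Minty:Struct} at $(\bar z^k, g^k)$ produces a negative contribution of the form $-\alpha_k\gamma^2(2\rho/\gamma + O(1))\|g^k\|^2$, which is the origin of the $2\rho/\gamma$ summand in $\mu$. The Lipschitz residual $\gamma^2\|Fz^k - F\bar z^k\|^2$ is absorbed into the factor $1-\gamma^2 L_F^2$ thanks to $\gamma<1/L_F$, while the cross term coupling $g^k$ with $e^k$ is split via Young's inequality against the Lyapunov's $\eta\|e^{k-1}\|^2$ mass.

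The core obstacle is a bias recursion for $\|e^k\|^2$. By construction of $h^k$ in \Cref{state:h:sto:Struct} (a STORM-type correction combining a fresh sample at $z^k$ with the same $\xi_k$ evaluated at $z^{k-1}$, mirroring line \ref{state:barzL:sto:Smooth} of \Cref{alg:WeakMinty:Sto:Struct}), one has $\mathbb E_k[e^k] = (1-\alpha_k)\,e^{k-1}$. Splitting the zero-mean remainder via Young's inequality and invoking \Cref{ass:AsymPrecon:stoch:stocLips} together with the variance bound yields
\[
\mathbb E_k[\|e^k\|^2] \leq (1-\alpha_k)^2 \|e^{k-1}\|^2 + 2\gamma^2 L_{\hat F}^2 \|z^k - z^{k-1}\|^2 + 2\gamma^2 \alpha_k^2\sigma_F^2,
\]
and $\|z^k - z^{k-1}\|^2 = \alpha_{k-1}^2\|h^{k-1} - H\bar z^{k-1}\|^2$ can be bounded by $O(\alpha_{k-1}^2(\|g^{k-1}\|^2 + \|e^{k-1}\|^2))$. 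Weighting this recursion by $\eta$ and folding it into $V_k$ is the most delicate bookkeeping step: positivity of $\mu$ in \eqref{thm:const:convergence:conditions} is exactly what is required for the coefficient multiplying $\|g^k\|^2$ to remain negative after all cancellations, while the surviving variance contributions combine into $C\gamma^2\sigma_F^2\alpha_k^2$ with the stated $C = 1 + 2\eta(1+\gamma^2 L_{\hat F}^2) + 2\alpha_0\eta$.

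Assembling these pieces gives a one-step estimate $\mathbb E_k[V_{k+1}] \leq V_k - \gamma^2\mu\,\alpha_k\|g^k\|^2 + C\gamma^2\sigma_F^2\alpha_k^2$. Telescoping from $0$ to $K$, taking total expectations, and using $\|g^k\|^2 \geq \dist(0,T\bar z^k)^2$ (since $g^k \in T\bar z^k$), I divide by $\gamma^2\mu\sum_{j=0}^K \alpha_j$ and introduce the randomized index $k_\star$ sampled proportionally to $\alpha_k$ to recover the stated bound.
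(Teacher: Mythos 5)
Your proposal follows essentially the same route as the paper's proof: the same potential (squared distance to the solution plus an $\eta$-weighted bias term $\|h^{k}-Hz^{k}\|^2$), the same STORM-type bias recursion obtained from $\mathbb E_k[e^k]=(1-\alpha_k)e^{k-1}$ together with \Cref{ass:AsymPrecon:stoch:stocLips}, the same use of weak MVI applied to $\tfrac1\gamma\bigl(h^k - H\bar z^k\bigr)\in T\bar z^k$, the same Young-inequality balancing that yields the $\sqrt{\alpha_0}$-dependent constants, and the same randomized-index conversion to $\mathbb E[\dist(0,T\bar z^{k_\star})^2]$. The only differences are bookkeeping-level (the paper keeps an explicit $B_{k+1}\|z^{k+1}-z^k\|^2$ term in the potential instead of shifting indices in the telescope, and controls $\|\bar z^k - z^k\|$ via cocoercivity/strong monotonicity of $H$ rather than Lipschitzness of $F$ directly), so your argument is correct and matches the paper's.
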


\begin{remark}
The condition on $\rho$ in \eqref{thm:const:convergence:conditions} reduces to $\rho > - \nicefrac{\gamma}{2}$ when $\alpha_0 \rightarrow 0$ as in the deterministic case.
As oppose to \Cref{thm:BiasCorr} which tracks $\|Fz^k\|^2$, the convergence measure of \Cref{thm:const:convergence} reduces to $\dist(0,T\z^{k})^2=\|F\z^{k}\|^2$ when $A\equiv 0$.
Since \Cref{alg:WeakMinty:Sto:Struct} and \Cref{alg:WeakMinty:Sto:StructA} coincide when $A \equiv 0$, \Cref{thm:const:convergence} also applies to \Cref{alg:WeakMinty:Sto:Struct} in the unconstrained case.
Consequently, we obtain rates for both $\|F\bar z^k\|^2$ and $\|Fz^k\|^2$ in the unconstrained smooth case. 
\end{remark}

\begin{algorithm}[t]
    \caption{(BC-PSEG+) Stochastic algorithm for problem \eqref{eq:StrucIncl}}%
    \label{alg:WeakMinty:Sto:StructA}%
    
\begin{algorithmic}[1]
	\Require
		\(z^{-1} = z^0 \in\R^n\),
		$h^{-1}\in\R^n$,
		$\alpha_k \in (0,1)$,
		$\gamma \in(\lfloor-2 \rho\rfloor_{+}, 1 / L_F)$

\item[\algfont{Repeat} for \(k=0,1,\ldots\) until convergence]

\State\label{state:FZ:sto:Struct}%
	Sample $\xi_k\sim \mathcal{P}$
\State\label{state:h:sto:Struct}%
	$h^k = \big( z^k - \gamma \hat F(z^k, \xi_k)\big) + (1-\alpha_k) \Big( h^{k-1} - \big(z^{k-1} - \gamma \hat F(z^{k-1}, \xi_k)\big) \Big)$
\State\label{state:barzL:sto:Struct}%
	$\bar z^k = (\id + \gamma A)^{-1} h_k$
\State\label{state:Fbarz:Struct}%
	Sample $\bar\xi_k\sim \mathcal{P}$
\State\label{state:z+:sto:Struct}%
	$z^{k+1} = z^k - \alpha_k\big(h^k - \bar z^k + \gamma \hat F(\bar z^k, \bar \xi_k)\big)$

\item[\algfont{Return}]
	\(z^{k+1}\)
\end{algorithmic}

\end{algorithm}

\section{Asymmetric \& nonlinear preconditioning}
\label{sec:precon}

{%
In this section we show that the family of stochastic algorithms which converges under weak MVI can be expanded beyond \Cref{alg:WeakMinty:Sto:StructA}. This is achieved by extending \eqref{eq:ceg+} through introducing a nonlinear and asymmetrical preconditioning. 
Asymmetrical preconditioning has been used in the literature  to unify a large range of algorithm in the monotone setting \cite{Latafat2017Asymmetric}. A subtle but crucial difference, however, is that the preconditioning considered here depends nonlinearly on the current iterate. As it will be shown in \Cref{sec:pdhg} this nontrivial feature is the key for showing convergence for primal-dual algorithms in the nonmonotone setting. 

Consider the following generalization of \eqref{eq:ceg+} by introducing a potentially  \emph{asymmetric nonlinear} preconditioning \(P_{z^k}\) that depends on the current iterate \(z^k\). 
\begin{subequations}\label{eq:AFBA} 
    \begin{align}
    \textrm{find} \; \bar z^k \; \textrm{such that} \quad &
    \HC[z^k][z^k]
    {}\in{}
    \PC[z^k][\z^k]+A(\z^k),
    \label{eq:AFBA:Stoc:bar}
       \\
   \textrm{update} \quad&
   z^{k+1} 
   {}={}
    z^k + \alpha \Gamma
    \left(%
        \HC[z^k][\z^k] - \HC[z^k][z^k]
    \right). 
    \label{eq:AFBA:Stoc:plus}
    \qquad 
 \end{align}
\end{subequations}
where \(\HC[u][v]\coloneqq \PC[u][v] - F(v)\) and \(\Gamma\) is some positive definite matrix. 
The iteration independent and diagonal choice \(P_{z^k} = \gamma^{-1} \I\)  and \(\Gamma=\gamma \I\) correspond to the basic \eqref{eq:ceg+}. 
More generally we consider
\begin{equation}\label{eq:AFBA:P:det}
\PC[u][z] {}\coloneqq{}\Gamma^{-1} z + \QC[u][z]
\end{equation} 
where $\QC[u][z]$ captures the nonlinear and asymmetric part, which ultimately enables alternating updates and relaxing the Lipschitz conditions (see \Cref{rem:PDHG:Lips2}).
Notice that the iterates above does not always yield well-defined updates and one must inevitably impose additional structures on the preconditioner (we provide sufficient condition in \Cref{app:AFBA:prelim}).
Consistently with \eqref{eq:AFBA:P:det}, in the stochastic case we define
\begin{equation}
\SPC[u][z] \coloneqq \Gamma^{-1} z + \SQC[u][z].
\end{equation}}
The proposed stochastic scheme, which introduces a carefully chosen bias-correction term, is summarized as 
\begin{subequations}\label{eq:AFBA:Stoc} 
    \begin{align}
    \textrm{compute} \quad
    h^k 
        {}={}&
    \SPC[z^k][z^k][\xi_k]-\hat{F}(z^k,\xi_k) + (1 - \alpha_{k})\Big(h^{k-1} - \SPC[z^{k-1}][z^{k-1}][\xi_k]+\hat{F}(z^{k-1},\xi_k)  \label{eq:AFBA:h}
    \\
        &{}-{}
        \SQC[z^{k-1}][\z^{k-1}][\hxi_{k-1}] + \SQC[z^{k-1}][\z^{k-1}][\hxi_{k}]
        \Big)  \qquad\text{with}\quad \xi_{k},\hxi_{k} \sim \mathcal{P}
         \notag 
    \\
    \textrm{find} \; \bar z^k \; \textrm{such that} \quad 
    &
    h^k
        {}\in{}
    \SPC[z^k][\bar{z}^k][\hxi_k]+A\bar{z}^k 
    \label{eq:AFBA:bar}
       \\
   \textrm{update} \quad
   &
   z^{k+1} 
        {}={}
    z^k + \alpha_k \Gamma
    \left(%
        \SPC[z^k][\z^k][\bar \xi_k]-\hat{F}(\z^k,\bar\xi_k) 
        - h^k 
    \right) 
    \qquad\text{with}\quad \bar\xi_k \sim \mathcal{P}
    \label{eq:AFBA:plus}
    \qquad 
 \end{align}
\end{subequations}

\begin{remark}
The two additional terms in \eqref{eq:AFBA:h} are due to the interesting interplay between weak MVI and stochastic feedback, which forces a change of variables (see \Cref{app:bias-correction}). 
\end{remark}

To make a concrete choice of $\SQC[u][z][\xi]$ we will consider a minimax problem as a motivating example 
\rbl{(see \Cref{app:AFBA:prelim} for a more general setup).}

\subsection{Nonlinearly preconditioned primal dual hybrid gradient}
\label{sec:pdhg}

\begin{algorithm}[t]
    \caption{Nonlinearly preconditioned primal dual extragradient (NP-PDEG) for solving \eqref{eq:prob:PD}}%
    \label{alg:NP-PDHG}%
    
\begin{algorithmic}[1]
	\Require
		\(z^{-1} = z^0 = (x^0, y^0)\) with \(x^0,x^{-1},\hat{x}^{-1},\bar{x}^{-1} \in \R^n\), \(y^0,y^{-1} \in\R^r\), 
		\(\theta \in[0,\infty)\), \(\Gamma_{1} \succ0\), \(\Gamma_{2} \succ 0\)
\item[\algfont{Repeat} for \(k=0,1,\ldots\) until convergence]
\State \(\xi_k \sim \mathcal{P}\)
\State\label{state:NP-PDHG:hatx}%
  \(
    \hat x^k = x^k-\Gamma_{1}\nabla_{x}\hat\varphi(z^k, \xi_k) + (1-\alpha_k) \big(\hat x^{k-1} - x^{k-1} + \Gamma_1 \nabla_x \hat \varphi(x^{k-1}, y^{k-1}, \xi_k)\big)
  \)
\State\label{state:NP-PDHG:barx}%
	\(
		\bar{x}^k  =\prox_{f}^{\Gamma_{1}^{-1}}
		\big(%
      \hat x^k
		\big)
	\)
\State \(\xi_k' \sim \mathcal{P}\)
\State\label{state:NP-PDHG:haty}%
  \(
    \hat y^k = y^k+\Gamma_{2}
			\big(
        \theta\nabla_{y}\hat\varphi(\bar{x}^k,y^k, \xi_k') + (1-\theta)\nabla_{y}\hat\varphi(z^k, \xi_k)
      \big)
	\) \\
	\(
		\qquad \quad + (1-\alpha_k)\Big(\hat y^{k-1} - y^{k-1} - \Gamma_{2}
				\big(
					\theta\nabla_{y}\hat\varphi(\bar{x}^{k-1},y^{k-1}, \xi_k') + (1-\theta)\nabla_{y}\hat\varphi(z^{k-1}, \xi_k)
				\big)
			\Big)
  \)
\State\label{state:NP-PDHG:bary}%
	\(
		\bar{y}^k  =\prox_{g}^{\Gamma_{2}^{-1}}
      \big(%
        \hat y^k
      \big)
	\)
\State \(\bar\xi_k \sim \mathcal{P}\)
\State\label{state:NP-PDHG:x}%
	\(
		x^{k+1}
			{}={}
		x^k + \alpha_k 
		\left(%
			\bar{x}^k
      - \hat x^k
      -\Gamma_{1} \nabla_{x}\hat\varphi(\bar{z}^k, \bar\xi_k)
		\right)
	\)
\State\label{state:NP-PDHG:y}%
	\(
		y^{k+1}
			{}={}
		y^k + \alpha_k
		\left(%
			\bar{y}^k
      - \hat y^k
      +\Gamma_{2} \nabla_{y}\hat\varphi(\bar{z}^k, \bar\xi_k) 
		\right)
	\)
\item[\algfont{Return}]
	\(z^{k+1}=(x^{k+1},y^{k+1})\)
\end{algorithmic}

\end{algorithm}

 We consider the problem of 
 \begin{equation}\label{eq:prob:PD}
     \minimize_{x\in\R^n} \maximize_{y\in\R^r} \quad f(x) + \varphi(x,y) - g(y).
 \end{equation}
 where $\varphi(x,y) := \mathbb E_\xi [\hat \varphi(x,y,\xi)]$.
 The first order optimality conditions may be written as the inclusion
 \begin{equation}\label{eq:AnF}
    0\in Tz \coloneqq A z + Fz,
    \quad \textrm{where} \quad 
    A = (\partial f, \partial g), \quad 
    F(z) = (\nabla_x \varphi(z), -\nabla_y \varphi(z)),
 \end{equation}
 while the algorithm only has access to the stochastic estimates $\hat F(z, \xi) \coloneqq (\nabla_x \hat\varphi(z,\xi), -\nabla_y \hat\varphi(z,\xi))$.
\begin{ass}\label{ass:PD}
For problem \eqref{eq:prob:PD}, let the following hold with a stepsize matrix $\Gamma = \blockdiag(\Gamma_{1}, \Gamma_{2})$ where $\Gamma_1\in \R^n$ and $\Gamma_2\in \R^r$ are symmetric positive definite matrices:%
    \begin{enumerate}
        \item \(f\), \(g\) are proper lsc convex 
        \item \label{ass:PD:Lip:phi} \(\varphi:\R^{n+r} \to \R\) is continuously differentiable and for some symmetric positive definite matrices \(D_{xx},D_{xy}, D_{yx}, D_{yy}\), the following holds for all \(z=(x,y), z^\prime=(x^\prime,y^\prime)\in\R^{n+r}\)
        \begin{align*}
            \|\nabla_{x}\varphi(z^\prime)-\nabla_{x}\varphi(z)\|^{2}_{\Gamma_{1}}    
                {}\leq{}
            L^2_{xx}\|x^\prime - x\|_{D_{xx}}^{2}
                {}+{}
            L^2_{xy}\|y^\prime - y\|_{D_{xy}}^{2},
            \\
            \qquad
            \|\nabla_{y}\varphi(z^\prime)-\theta\nabla_{y}\varphi(x^\prime,y)-(1-\theta)\nabla_{y}\varphi(z)\|_{\Gamma_{2}}^{2} 
                {}\leq{}
            L^2_{yx}\|x^\prime - x\|_{D_{yx}}^{2}
                {}+{}
            L^2_{yy}\|y^\prime - y\|_{D_{yy}}^{2}.
        \end{align*}
        \item Stepsize condition: \label{ass:PD:stepsize} 
             $L^2_{xx}D_{xx} + L^2_{yx}D_{yx} \prec \Gamma_{1}^{-1}
             \quad \text{and} \quad
             L^2_{xy}D_{xy} + L^2_{yy}D_{yy} \prec \Gamma_{2}^{-1}.$
        \item  \label{ass:PD:stoch:boundedvar}
            Bounded variance:
            \(%
                \mathbb{E}_{\xi}\left[\|\hat F(z,\xi)-\hat F(z',\xi)\|^2_{\Gamma}\right] \leq \sigma_F^2  \quad \forall z,z' \in \R^n
            \).%
        \item \label{ass:PD:Lipmean:phi}
        \(\hat \varphi(\cdot, \xi):\R^{n+r} \to \R\) is continuously differentiable and for some symmetric positive definite matrices \(D_{\widehat{xz}},D_{\widehat{yz}}, D_{\widehat{yx}}, D_{\widehat{yy}}\), the following holds for all \(z=(x,y), z^\prime=(x^\prime,y^\prime)\in\R^{n+r}\) and $v,v'\in\R^n$
        \looseness=-1
        \begin{align*}
            \text{for $\theta \in [0, \infty)$:}\quad &
                \mathbb E_{\xi}\left[\|\nabla_{x}\hat\varphi(z^\prime,\xi)-\nabla_{x}\hat\varphi(z,\xi)\|^{2}_{\Gamma_{1}}\right]    
                    {}\leq{}
                L^2_{\widehat{xz}}\|z^\prime - z\|_{D_{\widehat{xz}}}^{2}
                \\
            \text{if $\theta \neq 1$:}\quad &
                \mathbb E_{\xi}\left[\|
                        \nabla_{y}\hat\varphi(z,\xi)
                        - \nabla_{y}\hat\varphi(z',\xi)
                    \|_{\Gamma_{2}}^{2}
                \right]
                    {}\leq{}
                L^2_{\widehat{yz}}\|z^\prime - z\|_{D_{\widehat{yz}}}^{2}
                \\
            \text{if $\theta \neq 0$:}\quad &
                \mathbb E_{\xi}\left[\|
                        \nabla_{y}\hat\varphi(v',y',\xi)
                        - \nabla_{y}\hat\varphi(v,y,\xi)
                    \|_{\Gamma_{2}}^{2}
                \right]
                    {}\leq{}
                L^2_{\widehat{yx}}\|v^\prime - v\|_{D_{\widehat{yx}}}^{2}
                    {}+{}
                L^2_{\widehat{yy}}\|y^\prime - y\|_{D_{\widehat{yy}}}^{2}.
        \end{align*}    
    \end{enumerate}
\end{ass}

\begin{rem}
    In \Cref{alg:NP-PDHG} the choice of \(\theta \in [0,\infty)\) leads to different algorithmic oracles and underlying assumptions in terms of Lipschitz continuity in \Cref{ass:PD:Lip:phi,ass:PD:Lipmean:phi}.
    \begin{enumerate}
        \item 
            If \(\theta=0\) then the first two steps may be computed in parallel and we recover \Cref{alg:WeakMinty:Sto:StructA}. Moreover, to ensure \Cref{ass:PD:Lip:phi} in this case it suffices to assume for $L_x,L_y \in [0,\infty)$,
            \[
                \|\nabla_x \varphi(z^\prime) - \nabla_x \varphi(z)\| \leq L_{x}\|z^\prime -z\|,\quad
                \|\nabla_y \varphi(z^\prime) - \nabla_y \varphi(z)\| \leq L_{y}\|z^\prime -z\|.
            \]
        \item \label{rem:PDHG:Lips2}
            Taking \(\theta=1\) leads to Gauss-Seidel updates 
            \rbl{and a nonlinear primal dual extragradient algorithm }
            with sufficient Lipschitz continuity assumptions for some $L_x,L_y \in [0,\infty)$,
            \[
                \|\nabla_x \varphi(z^\prime) - \nabla_x \varphi(z)\| \leq L_{x}\|z^\prime -z\|,\quad
                \|\nabla_y \varphi(z^\prime) - \nabla_y \varphi(x^\prime,y)\| \leq L_{y}\|y^\prime -y\|.
            \]
    \end{enumerate}
    \vspace{-2em}
\end{rem}

\Cref{alg:NP-PDHG} is an application of \eqref{eq:AFBA:Stoc} applied for solving \eqref{eq:AnF}. 
In order to cast the algorithm as an instance of the template algorithm \eqref{eq:AFBA:Stoc}, we choose the positive definite stepsize matrix as $\Gamma = \blockdiag(\Gamma_{1}, \Gamma_{2})$ with $\Gamma_{1} \succ0,\; \Gamma_{2} \succ 0$,
and the nonlinear part of the preconditioner as
 \begin{equation}
 \label{eq:Q:def}
    \SQC[u][\z]
        {}\coloneqq{}
    \left(%
        0, 
        - \theta \nabla_y \hat \varphi(\bar x, y,\xi)
    \right),
    \quad \text{and} \quad 
    \QC[u][\z]
        {}\coloneqq{}
    \left(%
        0, 
        - \theta \nabla_y \varphi(\bar x, y)
    \right)
 \end{equation}
 where $u=(x,y)$ and $\bar z=(\bar x,\bar y)$.
 Recall $\HC[u][z] \coloneqq \PC[u][z] - F(z)$ and define $S_{u}(z;\z) \coloneqq H_{u}(z) - \QC[u][\z]$.
 The convergence in \Cref{thm:AsymPrecon:PDHG:convergence} depends on the distance between the initial estimate $\Gamma^{-1}\hat{z}^{-1}$ with $\hat{z}^{-1} = (\hat{x}^{-1},\hat{y}^{-1})$ and the deterministic $S_{z^{-1}}(z^{-1};\z^{-1})$.
 See \Cref{app:preliminaries} for additional notation.

\begin{thm}\label{thm:AsymPrecon:PDHG:convergence}
    Suppose that \cref{ass:AsymPrecon:Minty:Struct} to \ref{ass:AsymPrecon:stoch:unbiased} and \ref{ass:PD} hold.
    Moreover, suppose that \(\alpha_k\in(0,1)\), \rbl{\(\theta\in[0,\infty)\)} and the following holds,
    \begin{gather}\label{thm:AsymPrecon:PDHG:convergence:conditions}
    \mu \coloneqq \tfrac{1-\sqrt{\alpha_0}}{1+\sqrt{\alpha_0}}
        + \tfrac{2\rho}{\bar \gamma} 
        -\alpha_0 
        - 2\alpha_0 (\hat{c}_1 + 2\hat{c}_2(1+\hat{c}_3))\eta 
         > 0 
    \quad \text{and} \quad
    1 - 4\hat{c}_2\alpha_0 > 0
    \end{gather}
    where \(\bar \gamma\) denotes the smallest eigenvalue of \(\Gamma\), $\eta \geq {(1+4\hat{c}_2\alpha_0^2)(\tfrac{1}{\sqrt{\alpha_0}(1-L_M)^2} + \tfrac {1-\sqrt{\alpha_0}}{\sqrt{\alpha_0}})}/{(1 - 4\hat{c}_2\alpha_0)}$ and 
    \begin{gather*}
    \hat{c}_1 \coloneqq L^2_{\widehat{xz}}\|\Gamma D_{\widehat{xz}}\|+2(1-\theta)^2L^2_{\widehat{yz}}\|\Gamma D_{\widehat{yz}}\|+2\theta^2L^2_{\widehat{yy}}\|\Gamma_2D_{\widehat{yy}}\|, 
    \quad
    \hat{c}_2 \coloneqq 2\theta^2L^2_{\widehat{yx}}\|\Gamma_1 D_{\widehat{yx}}\|, 
    \quad
    \hat{c}_3 \coloneqq L^2_{\widehat{xz}}\|\Gamma D_{\widehat{xz}}\|, \\
    L_M^2 \coloneqq \max\big\{L_{xx}^2\|D_{xx}\Gamma_1\|+L_{yx}^2\|D_{yx}\Gamma_1\|, \|L_{xy}^2\|D_{xy}\Gamma_2\|+L_{yy}^2\|D_{yy}\Gamma_2\|\big\}.
    \end{gather*}
    Consider the sequence \(\seq{z^k}\) generated by \Cref{alg:NP-PDHG}.
    Then, the following holds for all $z^\star \in \mathcal S^\star$ 
    \begin{align*}
        \rbl{
        \mathbb E[\dist_\Gamma(0,T\z^{k_\star})^2]
        }
            {}\leq{}
        \frac{\mathbb E[\|z^{0}-z^{\star}\|_{\Gamma^{-1}}^{2}] + \eta \mathbb E[\|\Gamma^{-1}\hat z^{-1}-S_{z^{-1}}(z^{-1};\z^{-1})\|_{\Gamma}^{2}] + C \sigma_F^2\sum_{j=0}^K \alpha_{j}^{2}}{\mu \sum_{j=0}^K \alpha_{j}}
    \end{align*}
    where
    $C \coloneqq 2(\eta + \alpha_0 (\tfrac{1}{\sqrt{\alpha_0}(1-L_M)^2} + \tfrac {1-\sqrt{\alpha_0}}{\sqrt{\alpha_0}}))(1+2\hat{c}_2) + 1 + 2(\hat{c}_1 + 2\hat{c}_2(\Theta+\hat{c}_3))\eta$ 
    with 
    $\Theta = (1-\theta)^2 + 2\theta^2$ 
    and
    $k_{\star}$ is chosen from $\{0,1, \ldots, K\}$ according to probability $\mathcal{P}\left[k_{\star}=k\right]=\frac{\alpha_k}{\sum_{j=0}^K \alpha_j}$.
\end{thm}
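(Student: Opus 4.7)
The plan is to recognize Algorithm~\ref{alg:NP-PDHG} as an instance of the template scheme \eqref{eq:AFBA:Stoc} with the particular choice of the preconditioner in \eqref{eq:Q:def}, and then invoke the abstract convergence result for \eqref{eq:AFBA:Stoc} that underlies \Cref{thm:const:convergence} (its generalization, presumably established in the appendix that handles the full template). The translation then amounts to (i) verifying that the concrete proximal/gradient steps in \Cref{alg:NP-PDHG} match the abstract inclusion~\eqref{eq:AFBA:bar} and update~\eqref{eq:AFBA:plus}, and (ii) turning the abstract Lipschitz-in-mean and variance conditions on $\SPC$, $\SQC$, and $\hat F$ into the explicit constants $\hat c_1,\hat c_2,\hat c_3, L_M$.

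First I would verify the instantiation. With $\Gamma=\blockdiag(\Gamma_1,\Gamma_2)$ and $\QC[u][\bar z]=(0,-\theta\nabla_y\varphi(\bar x,y))$, the inclusion \eqref{eq:AFBA:bar}, $h^k\in \SPC[z^k][\bar z^k][\hxi_k]+A\bar z^k$, separates into an $x$-block and a $y$-block. The $x$-block gives the proximal step in state~\ref{state:NP-PDHG:barx}, while the $y$-block, after rearranging with the $\theta$ term, gives the proximal step in state~\ref{state:NP-PDHG:bary}; the two extra terms in the definition \eqref{eq:AFBA:h} of $h^k$ are exactly what is needed to account for the change of $\hat\xi_k$ between consecutive iterations in the new variable $\SQC[z^{k-1}][\bar z^{k-1}][\cdot]$. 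Checking that $\hat x^k$, $\hat y^k$ in Algorithm~\ref{alg:NP-PDHG} coincide with $\Gamma h^k$ componentwise is then a mechanical exercise, after which \eqref{eq:AFBA:plus} becomes states~\ref{state:NP-PDHG:x}--\ref{state:NP-PDHG:y}.

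Next I would invoke the abstract theorem for \eqref{eq:AFBA:Stoc}, which yields an estimate of the same form as in \Cref{thm:const:convergence} but in the weighted norm $\|\cdot\|_{\Gamma^{-1}}$ and with an abstract Lipschitz-like constant $L_M$ for the deterministic kernel $S_{u}(\cdot;\cdot)$ together with mean-square Lipschitz constants for $\SPC$ and $\SQC$. The remaining work is the computation of these constants. Applying \Cref{ass:PD:Lip:phi} with the block structure of $\Gamma$ bounds the deterministic part: a Young's inequality together with the spectral bounds $\|D_{xx}\Gamma_1\|$, etc., produces the maximum in $L_M^2$, and the stepsize condition~\ref{ass:PD:stepsize} guarantees $L_M<1$. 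For the stochastic parts, expanding $\SPC[u][z][\xi]-\SPC[u][z'][\xi]$ and $\SQC[u][z][\xi]-\SQC[u][z'][\xi']$ and applying \Cref{ass:PD:Lipmean:phi,ass:PD:stoch:boundedvar} blockwise yields the constants $\hat c_1$ (Lipschitz-in-mean of the full $\hat P$, hence the contributions from $L_{\widehat{xz}}^2$, $(1-\theta)^2 L_{\widehat{yz}}^2$, and $\theta^2 L_{\widehat{yy}}^2$), $\hat c_2$ (the extra $\SQC$ cross-term with $\theta^2 L_{\widehat{yx}}^2$), and $\hat c_3$ (from the gradient noise appearing in the $x$-block of $\hat P$).

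The main obstacle is the asymmetric, iterate-dependent part $\SQC[u][\bar z][\xi]$: unlike in \Cref{thm:const:convergence}, the bias-correction \eqref{eq:AFBA:h} must now absorb the additional stochastic term $\SQC[z^{k-1}][\bar z^{k-1}][\hxi_{k-1}]-\SQC[z^{k-1}][\bar z^{k-1}][\hxi_k]$, which depends on two independent samples of $\hxi$. Controlling its second moment is what forces the coefficient $\hat c_2$ to enter quadratically (through $\hat c_2\alpha_0^2$ in $\eta$) and generates the extra condition $1-4\hat c_2\alpha_0>0$. The derivation needs a careful conditioning argument that separates the contribution of $\xi_k$, $\hxi_k$ and $\bar\xi_k$ and uses the tower property so that only the variance-like terms survive; the remaining deterministic descent inequality is then of the same structure as in the $\theta=0$ case, and plugging the explicit constants into the abstract estimate gives the stated bound.
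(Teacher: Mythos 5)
Your plan follows essentially the same route as the paper: the appendix proves exactly this generalization of \Cref{thm:const:convergence} (with free Young parameters $b,\varepsilon$) by instantiating the template \eqref{eq:AFBA:Stoc} with the choice \eqref{eq:Q:def}, changing variables to $s^k = h^k - \hat Q_{z^k}(\bar z^k,\xi_k')$ (which is precisely what the two extra bias-correction terms in \eqref{eq:AFBA:h} enable), running the same $\Gamma$-weighted potential-function descent with the lemmas on $M$, $H_u$ and the preconditioned resolvent, and then specializing $b=\sqrt{\alpha_0}$, $\varepsilon = 1/(\sqrt{\alpha_0}(1-L_M)^2)$ and converting the weighted sum into a randomly sampled iterate and the distance $\dist_\Gamma(0,T\bar z^{k_\star})$. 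The one small imprecision is the provenance of the $\hat c_2\alpha_0^2$ term and the condition $1-4\hat c_2\alpha_0>0$: the two-sample $\hat Q$ difference is neutralized by the index shift, and these terms instead arise from bounding $\|\bar x^k-\bar x^{k-1}\|_{\Gamma_1^{-1}}$ by $\|s^k_x-s^{k-1}_x\|_{\Gamma_1}$ via nonexpansiveness of the resolvent, which feeds $\|s^{k-1}-S_{z^{k-1}}(z^{k-1};\bar z^{k-1})\|_\Gamma^2$ back into the recursion with an $\alpha_k^2$ factor.
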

\begin{remark}
When $\alpha_0 \rightarrow 0$ the conditions in \eqref{thm:AsymPrecon:PDHG:convergence} reduces to $1+\tfrac{2\rho}{\bar \gamma} > 0$ as in the deterministic case.
\rbl{Almost sure convergence is provided in \Cref{app:thm:AFBA:almostsure} of the appendix.}
\end{remark}
For $\theta=0$ \Cref{alg:NP-PDHG} reduces to \Cref{alg:WeakMinty:Sto:StructA}.
With this choice \Cref{thm:AsymPrecon:PDHG:convergence} simplifies, since the constant $\hat{c}_2=0$, and we recover the convergence result of \Cref{thm:const:convergence}.

\section{Experiments}
\label{sec:experiments}

\begin{figure}[t]
\vspace{-1.5em}
\centering
\includegraphics[width=0.5\textwidth]{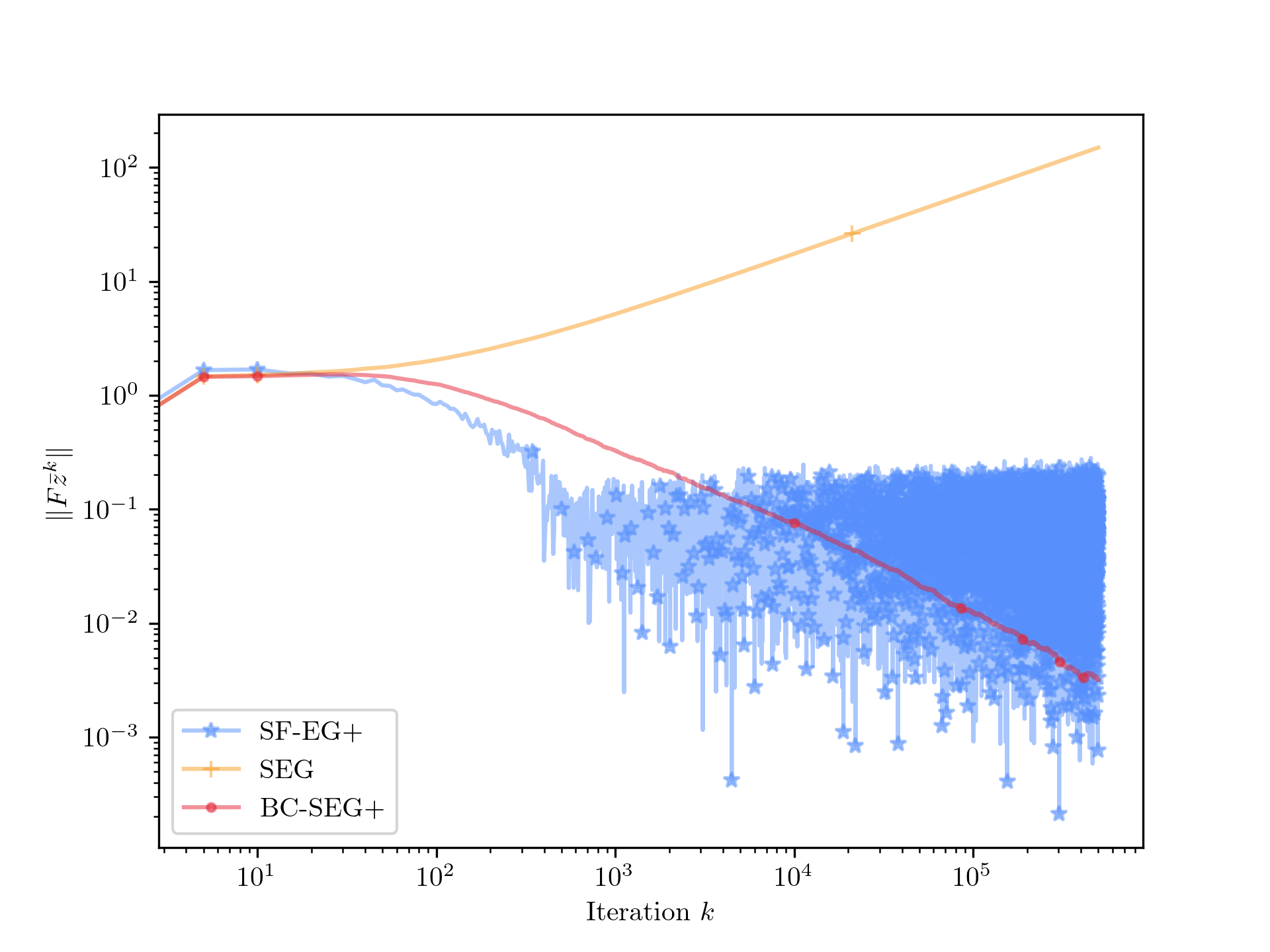}%
\includegraphics[width=0.5\textwidth]{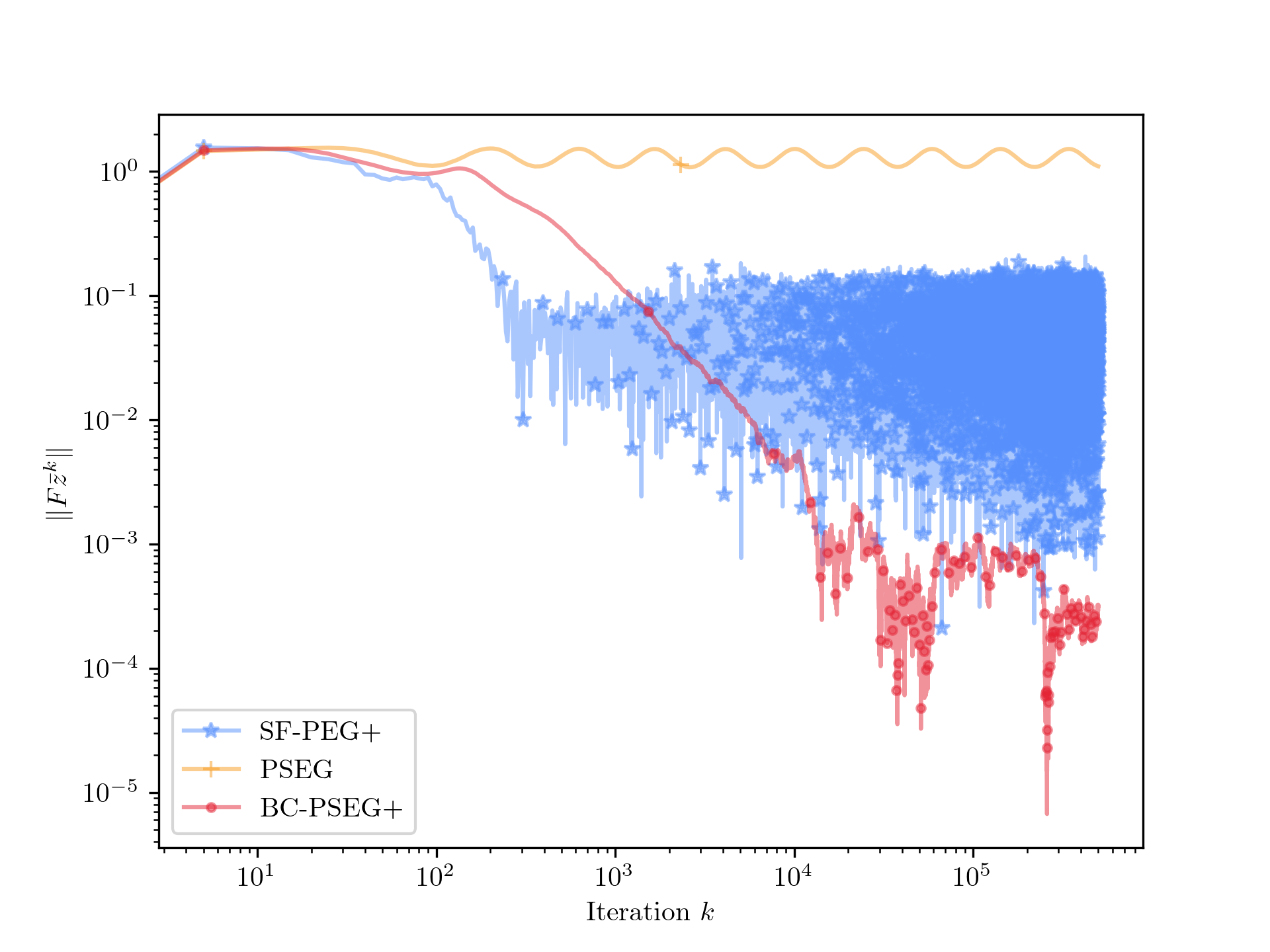}%
\caption{Comparison of methods in the unconstrained setting of \Cref{ex:quadratic} (left) and the constrained setting of \Cref{ex:globalforsaken} (right).
\rbl{
Notice that only BC-SEG+ and BC-PSEG+ converges properly while \eqref{eq:seg} diverges, \eqref{eq:PSEG} cycles and both \eqref{eq:SF-EG+} and \eqref{eq:SF-PEG+} only converge to a neighborhood.
BC-(P)SEG+ is guaranteed to converge with probability 1 as established through \Cref{thm:BiasCorr,app:thm:AFBA:almostsure}.
}
}
\label{fig:simulations}
\vspace{-1.5em}
\end{figure}

\rbl{We compare BC-SEG+ and BC-PSEG+ against \eqref{eq:eg+} using \emph{stochastic feedback} (which we refer to as \eqref{eq:SF-EG+}) and \eqref{eq:seg} in both an unconstrained setting and a constrained setting introduced in \citet{pethick2022escaping}.
See \Cref{app:algo} for the precise formulation of the projected variants which we denote \eqref{eq:SF-PEG+} and \eqref{eq:PSEG} respectively.}
In the unconstrained example we control all problem constant and set $\rho=-\nicefrac{1}{10L_F}$, while the constrained example is a specific minimax problem where $\rho > -\nicefrac{1}{2L_F}$ holds within the constrained set for a Lipschitz constant $L_F$ restricted to the same constrained set.
To simulate a stochastic setting in both examples, we consider additive Gaussian noise, i.e.
$\hat F(z,\xi) = Fz + \xi$ where $\xi \sim \mathcal N(0,\sigma^2 I)$.
In the experiments we choose $\sigma = 0.1$ \rbl{and $\alpha_k \propto \nicefrac{1}{k}$, which ensures almost sure convergence of BC-(P)SEG+}.
For a more aggressive stepsize choice $\alpha_k \propto \nicefrac{1}{\sqrt{k}}$ see \Cref{fig:aggressive-step}.
Further details can be found in \Cref{app:experiments}.%

The results are shown in \Cref{fig:simulations}.
The sequence generated by \eqref{eq:seg} and \eqref{eq:PSEG} diverges for the unconstrained problem and cycles in the constrained problem respectively.
In comparison \eqref{eq:SF-EG+} and \eqref{eq:SF-PEG+} gets within a neighborhood of the solutions but fails to converge due to the non-diminishing stepsize, while BC-SEG+ and BC-PSEG+ converges in the examples.

\section{Conclusion}
\label{sec:conclusion}

This paper shows that nonconvex-nonconcave problems characterize by the weak Minty variational inequality can be solved efficiently even when only \emph{stochastic} gradients are available.
The approach crucially avoids increasing batch sizes by instead introducing a bias-correction term.
We show that convergence is possible for the same range of problem constant $\rho \in (-\nicefrac{\gamma}{2},\infty)$ as in the deterministic case.
Rates are established for a \emph{random iterate}, which matches those of stochastic extragradient in the monotone case,
and the result is complemented with almost sure convergence, thus providing asymptotic convergence for the \emph{last iterate}.
We show that the idea extends to a family of extragradient-type methods which includes a nonlinear extension of the celebrated primal dual hybrid gradient (PDHG) algorithm.
For future work it is interesting to see if the rate can be improved by considering accelerated methods such as Halpern iterations.

\section{Acknowledgments and disclosure of funding}
\label{sec:acknowledgement}
This project has received funding from the European Research Council (ERC) under the European Union's Horizon 2020 research and innovation programme (grant agreement n° 725594 - time-data).
This work was supported by the Swiss National Science Foundation (SNSF) under  grant number 200021\_205011. %
The work of the third and fourth author was supported by the Research Foundation Flanders (FWO) postdoctoral grant 12Y7622N
and research projects G081222N, G033822N, G0A0920N; Research Council
KU Leuven C1 project No. C14/18/068; European Union's Horizon 2020
research and innovation programme under the Marie Skłodowska-Curie
grant agreement No. 953348. %
The work of Olivier Fercoq was supported by the Agence National de la Recherche grant ANR-20-CE40-0027, Optimal Primal-Dual Algorithms (APDO).

\bibliography{TeX/SWMVI_NeurIPS2022.bib,TeX/Bibliography.bib}

\begin{thebibliography}{34}
\providecommand{\natexlab}[1]{#1}
\providecommand{\url}[1]{\texttt{#1}}
\expandafter\ifx\csname urlstyle\endcsname\relax
  \providecommand{\doi}[1]{doi: #1}\else
  \providecommand{\doi}{doi: \begingroup \urlstyle{rm}\Url}\fi

\bibitem[Alacaoglu \& Malitsky(2021)Alacaoglu and
  Malitsky]{alacaoglu2021stochastic}
Ahmet Alacaoglu and Yura Malitsky.
\newblock Stochastic variance reduction for variational inequality methods.
\newblock \emph{arXiv preprint arXiv:2102.08352}, 2021.

\bibitem[Arjevani et~al.(2019)Arjevani, Carmon, Duchi, Foster, Srebro, and
  Woodworth]{arjevani2019lower}
Yossi Arjevani, Yair Carmon, John~C Duchi, Dylan~J Foster, Nathan Srebro, and
  Blake Woodworth.
\newblock Lower bounds for non-convex stochastic optimization.
\newblock \emph{arXiv preprint arXiv:1912.02365}, 2019.

\bibitem[Bauschke \& Combettes(2017)Bauschke and Combettes]{Bauschke2017Convex}
Heinz~H. Bauschke and Patrick~L. Combettes.
\newblock \emph{Convex analysis and monotone operator theory in {H}ilbert
  spaces}.
\newblock CMS Books in Mathematics. Springer, 2017.
\newblock ISBN 978-3-319-48310-8.

\bibitem[Bauschke et~al.(2021)Bauschke, Moursi, and
  Wang]{bauschke2021generalized}
Heinz~H Bauschke, Walaa~M Moursi, and Xianfu Wang.
\newblock Generalized monotone operators and their averaged resolvents.
\newblock \emph{Mathematical Programming}, 189\penalty0 (1):\penalty0 55--74,
  2021.

\bibitem[Bertsekas(2011)]{Bertsekas2011Incremental}
Dimitri~P. Bertsekas.
\newblock Incremental proximal methods for large scale convex optimization.
\newblock \emph{Mathematical programming}, 129\penalty0 (2):\penalty0 163--195,
  2011.

\bibitem[Beznosikov et~al.(2022)Beznosikov, Gorbunov, Berard, and
  Loizou]{beznosikov2022stochastic}
Aleksandr Beznosikov, Eduard Gorbunov, Hugo Berard, and Nicolas Loizou.
\newblock Stochastic gradient descent-ascent: Unified theory and new efficient
  methods.
\newblock \emph{arXiv preprint arXiv:2202.07262}, 2022.

\bibitem[B{\"o}hm(2022)]{bohm2022solving}
Axel B{\"o}hm.
\newblock Solving nonconvex-nonconcave min-max problems exhibiting weak minty
  solutions.
\newblock \emph{arXiv preprint arXiv:2201.12247}, 2022.

\bibitem[Bo{\c{t}} et~al.(2021)Bo{\c{t}}, Mertikopoulos, Staudigl, and
  Vuong]{boct2021minibatch}
Radu~Ioan Bo{\c{t}}, Panayotis Mertikopoulos, Mathias Staudigl, and Phan~Tu
  Vuong.
\newblock Minibatch forward-backward-forward methods for solving stochastic
  variational inequalities.
\newblock \emph{Stochastic Systems}, 11\penalty0 (2):\penalty0 112--139, 2021.

\bibitem[Cai et~al.(2022)Cai, Song, Guzm{\'a}n, and
  Diakonikolas]{cai2022stochastic}
Xufeng Cai, Chaobing Song, Crist{\'o}bal Guzm{\'a}n, and Jelena Diakonikolas.
\newblock A stochastic {Halpern} iteration with variance reduction for
  stochastic monotone inclusion problems.
\newblock \emph{arXiv preprint arXiv:2203.09436}, 2022.

\bibitem[Chambolle \& Pock(2011)Chambolle and Pock]{Chambolle2011firstorder}
A.~Chambolle and T.~Pock.
\newblock A first-order primal-dual algorithm for convex problems with
  applications to imaging.
\newblock \emph{Journal of Mathematical Imaging and Vision}, 40\penalty0
  (1):\penalty0 120--145, 2011.

\bibitem[Chen et~al.(2021)Chen, Sun, and Yin]{chen2021solving}
Tianyi Chen, Yuejiao Sun, and Wotao Yin.
\newblock Solving stochastic compositional optimization is nearly as easy as
  solving stochastic optimization.
\newblock \emph{IEEE Transactions on Signal Processing}, 69:\penalty0
  4937--4948, 2021.

\bibitem[Combettes \& Pennanen(2004)Combettes and
  Pennanen]{combettes2004proximal}
Patrick~L Combettes and Teemu Pennanen.
\newblock Proximal methods for cohypomonotone operators.
\newblock \emph{SIAM journal on control and optimization}, 43\penalty0
  (2):\penalty0 731--742, 2004.

\bibitem[Daskalakis et~al.(2021)Daskalakis, Skoulakis, and
  Zampetakis]{daskalakis2021complexity}
Constantinos Daskalakis, Stratis Skoulakis, and Manolis Zampetakis.
\newblock The complexity of constrained min-max optimization.
\newblock In \emph{Proceedings of the 53rd Annual ACM SIGACT Symposium on
  Theory of Computing}, pp.\  1466--1478, 2021.

\bibitem[Diakonikolas et~al.(2021)Diakonikolas, Daskalakis, and
  Jordan]{diakonikolas2021efficient}
Jelena Diakonikolas, Constantinos Daskalakis, and Michael Jordan.
\newblock Efficient methods for structured nonconvex-nonconcave min-max
  optimization.
\newblock In \emph{International Conference on Artificial Intelligence and
  Statistics}, pp.\  2746--2754. PMLR, 2021.

\bibitem[Fang et~al.(2018)Fang, Li, Lin, and Zhang]{fang2018spider}
Cong Fang, Chris~Junchi Li, Zhouchen Lin, and Tong Zhang.
\newblock Spider: Near-optimal non-convex optimization via stochastic
  path-integrated differential estimator.
\newblock \emph{Advances in Neural Information Processing Systems}, 31, 2018.

\bibitem[Ghadimi \& Lan(2013)Ghadimi and Lan]{ghadimi2013stochastic}
Saeed Ghadimi and Guanghui Lan.
\newblock Stochastic first-and zeroth-order methods for nonconvex stochastic
  programming.
\newblock \emph{SIAM Journal on Optimization}, 23\penalty0 (4):\penalty0
  2341--2368, 2013.

\bibitem[Gorbunov et~al.(2022)Gorbunov, Berard, Gidel, and
  Loizou]{gorbunov2022stochastic}
Eduard Gorbunov, Hugo Berard, Gauthier Gidel, and Nicolas Loizou.
\newblock Stochastic extragradient: General analysis and improved rates.
\newblock In \emph{International Conference on Artificial Intelligence and
  Statistics}, pp.\  7865--7901. PMLR, 2022.

\bibitem[Hirsch \& Vavasis(1987)Hirsch and Vavasis]{hirsch1987exponential}
M~Hirsch and S~Vavasis.
\newblock Exponential lower bounds for finding {B}rouwer fixed points.
\newblock In \emph{Proceedings of the 28th Symposium on Foundations of Computer
  Science}, pp.\  401--410, 1987.

\bibitem[Hsieh et~al.(2019)Hsieh, Iutzeler, Malick, and
  Mertikopoulos]{hsieh2019convergence}
Yu-Guan Hsieh, Franck Iutzeler, J{\'e}r{\^o}me Malick, and Panayotis
  Mertikopoulos.
\newblock On the convergence of single-call stochastic extra-gradient methods.
\newblock \emph{Advances in Neural Information Processing Systems}, 32, 2019.

\bibitem[Hsieh et~al.(2020)Hsieh, Iutzeler, Malick, and
  Mertikopoulos]{hsieh2020explore}
Yu-Guan Hsieh, Franck Iutzeler, J{\'e}r{\^o}me Malick, and Panayotis
  Mertikopoulos.
\newblock Explore aggressively, update conservatively: Stochastic extragradient
  methods with variable stepsize scaling.
\newblock \emph{arXiv preprint arXiv:2003.10162}, 2020.

\bibitem[Juditsky et~al.(2011)Juditsky, Nemirovski, and
  Tauvel]{juditsky2011solving}
Anatoli Juditsky, Arkadi Nemirovski, and Claire Tauvel.
\newblock Solving variational inequalities with stochastic mirror-prox
  algorithm.
\newblock \emph{Stochastic Systems}, 1\penalty0 (1):\penalty0 17--58, 2011.

\bibitem[Kannan \& Shanbhag(2019)Kannan and Shanbhag]{kannan2019optimal}
Aswin Kannan and Uday~V Shanbhag.
\newblock Optimal stochastic extragradient schemes for pseudomonotone
  stochastic variational inequality problems and their variants.
\newblock \emph{Computational Optimization and Applications}, 74\penalty0
  (3):\penalty0 779--820, 2019.

\bibitem[Latafat \& Patrinos(2017)Latafat and Patrinos]{Latafat2017Asymmetric}
Puya Latafat and Panagiotis Patrinos.
\newblock Asymmetric forward--backward--adjoint splitting for solving monotone
  inclusions involving three operators.
\newblock \emph{Computational Optimization and Applications}, 68\penalty0
  (1):\penalty0 57--93, Sep 2017.

\bibitem[Lee \& Kim(2021)Lee and Kim]{lee2021fast}
Sucheol Lee and Donghwan Kim.
\newblock Fast extra gradient methods for smooth structured
  nonconvex-nonconcave minimax problems.
\newblock \emph{arXiv preprint arXiv:2106.02326}, 2021.

\bibitem[Li et~al.(2022)Li, Yu, Loizou, Gidel, Ma, Le~Roux, and
  Jordan]{li2022convergence}
Chris~Junchi Li, Yaodong Yu, Nicolas Loizou, Gauthier Gidel, Yi~Ma, Nicolas
  Le~Roux, and Michael Jordan.
\newblock On the convergence of stochastic extragradient for bilinear games
  using restarted iteration averaging.
\newblock In \emph{International Conference on Artificial Intelligence and
  Statistics}, pp.\  9793--9826. PMLR, 2022.

\bibitem[Loizou et~al.(2020)Loizou, Berard, Jolicoeur-Martineau, Vincent,
  Lacoste-Julien, and Mitliagkas]{loizou2020stochastic}
Nicolas Loizou, Hugo Berard, Alexia Jolicoeur-Martineau, Pascal Vincent, Simon
  Lacoste-Julien, and Ioannis Mitliagkas.
\newblock Stochastic hamiltonian gradient methods for smooth games.
\newblock In \emph{International Conference on Machine Learning}, pp.\
  6370--6381. PMLR, 2020.

\bibitem[Loizou et~al.(2021)Loizou, Berard, Gidel, Mitliagkas, and
  Lacoste-Julien]{loizou2021stochastic}
Nicolas Loizou, Hugo Berard, Gauthier Gidel, Ioannis Mitliagkas, and Simon
  Lacoste-Julien.
\newblock Stochastic gradient descent-ascent and consensus optimization for
  smooth games: Convergence analysis under expected co-coercivity.
\newblock \emph{Advances in Neural Information Processing Systems},
  34:\penalty0 19095--19108, 2021.

\bibitem[Mishchenko et~al.(2020)Mishchenko, Kovalev, Shulgin, Richt{\'a}rik,
  and Malitsky]{mishchenko2020revisiting}
Konstantin Mishchenko, Dmitry Kovalev, Egor Shulgin, Peter Richt{\'a}rik, and
  Yura Malitsky.
\newblock Revisiting stochastic extragradient.
\newblock In \emph{International Conference on Artificial Intelligence and
  Statistics}, pp.\  4573--4582. PMLR, 2020.

\bibitem[Nguyen et~al.(2019)Nguyen, van Dijk, Phan, Nguyen, Weng, and
  Kalagnanam]{nguyen2019finite}
Lam~M Nguyen, Marten van Dijk, Dzung~T Phan, Phuong~Ha Nguyen, Tsui-Wei Weng,
  and Jayant~R Kalagnanam.
\newblock Finite-sum smooth optimization with {SARAH}.
\newblock \emph{arXiv preprint arXiv:1901.07648}, 2019.

\bibitem[Pethick et~al.(2022)Pethick, Latafat, Patrinos, Fercoq, and
  Cevher]{pethick2022escaping}
Thomas Pethick, Puya Latafat, Panagiotis Patrinos, Olivier Fercoq, and Volkan
  Cevher.
\newblock Escaping limit cycles: Global convergence for constrained
  nonconvex-nonconcave minimax problems.
\newblock In \emph{International Conference on Learning Representations}, 2022.

\bibitem[Rockafellar(1970)]{Rockafellar1970Convex}
Ralph~Tyrell Rockafellar.
\newblock \emph{Convex analysis}.
\newblock Princeton University Press, 1970.

\bibitem[Song et~al.(2020)Song, Zhou, Zhou, Jiang, and Ma]{song2020optimistic}
Chaobing Song, Zhengyuan Zhou, Yichao Zhou, Yong Jiang, and Yi~Ma.
\newblock Optimistic dual extrapolation for coherent non-monotone variational
  inequalities.
\newblock \emph{Advances in Neural Information Processing Systems},
  33:\penalty0 14303--14314, 2020.

\bibitem[Tseng(2000)]{Tseng2000modified}
P.~Tseng.
\newblock A modified forward-backward splitting method for maximal monotone
  mappings.
\newblock \emph{SIAM Journal on Control and Optimization}, 38\penalty0
  (2):\penalty0 431--446, 2000.

\bibitem[Yang et~al.(2020)Yang, Kiyavash, and He]{yang2020global}
Junchi Yang, Negar Kiyavash, and Niao He.
\newblock Global convergence and variance-reduced optimization for a class of
  nonconvex-nonconcave minimax problems.
\newblock \emph{arXiv preprint arXiv:2002.09621}, 2020.

\end{thebibliography}
\bibliographystyle{iclr2023_conference}

\newpage
\appendix

\begin{center}
\vspace{7pt}
{\Large \fontseries{bx}\selectfont Appendix}
\end{center}

\renewcommand{\contentsname}{Table of Contents}
\etocdepthtag.toc{mtappendix}
\etocsettagdepth{mtchapter}{none}
\etocsettagdepth{mtappendix}{subsection}
\tableofcontents

\newpage

\section{Prelude}
    
For the unconstrained and smooth setting
\Cref{app:seg+} treats convergences of \eqref{eq:seg+} for the restricted case where $F$ is linear.
\Cref{app:smooth} shows both random iterate results and almost sure convergence of \Cref{alg:WeakMinty:Sto:Struct}.
\Cref{thm:BiasCorr:2,thm:BiasCorr} in the main body are implied by the more general results in this section, which preserves certain free parameters and more general stepsize requirements.
\Cref{app:const,app:AFBA} moves beyond the unconstrained and smooth case by showing convergence for instances of the template scheme \eqref{eq:AFBA}.
Almost sure convergence is established in \Cref{app:thm:AFBA:almostsure}.
The analysis of \Cref{alg:NP-PDHG} in \Cref{app:AFBA} applies to \Cref{alg:WeakMinty:Sto:StructA}, but for completeness we establish convergence for general $F$ separately in \Cref{app:const}. %
The relationship between the theorems are presented in \Cref{app:tbl:overview}.

\begin{table}
\centering
\caption{Overview of the results. The second row is obtained as special cases of the first row.}
\label{app:tbl:overview}
\begin{tabular}{l|c|c|c|c}
\hline
\multicolumn{1}{r}{} & \multicolumn{2}{|c|}{Unconstrained \& smooth ($A\equiv 0$)}             & \multicolumn{2}{c}{Constrained ($A\not\equiv 0$)}                                                      \\
\hline
                     &  \multicolumn{1}{c|}{Random iterate} &  \multicolumn{1}{c|}{Last iterate} &  \multicolumn{1}{c|}{BC-PSEG+}                                  &  \multicolumn{1}{c}{NP-PDHG} \\
\hline
Appendix             & \Cref{app:thm:BiasCorr:2}          & \Cref{app:thm:BiasCorr}          & \Cref{app:thm:const:convergence}                                   & \Cref{app:thm:AsymPrecon:convergence} \\
             & $\Downarrow$ & $\Downarrow$ & $\Downarrow$ & $\Downarrow$ \\
Main paper           & \Cref{thm:BiasCorr:2}              & \Cref{thm:BiasCorr}              & \Cref{thm:const:convergence} &                                 \Cref{thm:AsymPrecon:PDHG:convergence}
\end{tabular}
\end{table}

\rbl{\section{Preliminaries}\label{app:preliminaries}
    Given a psd matrix $V$ we define the inner product as $\langle\cdot,\cdot\rangle_V \coloneqq \langle\cdot,V\cdot\rangle$ and the corresponding norm $\|\cdot\| \coloneqq \sqrt{\langle\cdot,\cdot\rangle_V}$.
The distance from $u \in \R^n$ to a set $\mathcal U \subseteq \R^n$ with respect to a positive definite matrix $V$ is defined as $\dist_V(u,\mathcal U) \coloneqq \min_{u' \in \mathcal U} \|u-u'\|_V$, which we simply denote $\dist(u,\mathcal U)$ when $V=I$.
The norm $\|X\|$ refers to spectral norm when $X$ is a matrix.

We summarize essential definitions from operator theory, but otherwise refer to
\cite{Bauschke2017Convex,Rockafellar1970Convex} for further details.

An operator $A:\R^n\rightrightarrows\R^d$ maps each point $x\in\R^n$ to a subset $Ax \subseteq \R^d$, where the notation $A(x)$ and $Ax$ will be used interchangably. 
We denote the domain of $A$ by 
$\dom A\coloneqq\{x\in\R^n\mid Ax\neq\emptyset\},$
its graph by 
$\graph A\coloneqq\{(x,y)\in\R^n\times \R^d\mid y\in Ax\}.$
The inverse of $A$ is defined through its graph, $\graph A^{-1}\coloneqq\{(y,x)\mid (x,y)\in\graph A\}$
and the set of its zeros by $\zer A\coloneqq\{x\in\R^n \mid 0\in Ax\}$. 

\begin{defin}[(co)monotonicity \cite{bauschke2021generalized}] 
An operator $A:\R^n\rightrightarrows\R^n$ is said to be $\rho$-monotone for some $\rho\in \R$, if for all $(x,y),(x^\prime,y^\prime)\in\graph A$
 \[
 \langle y-y^\prime, x-x^\prime\rangle \geq \rho\|x-x^\prime\|^2,
 \]
 and it is said to be $\rho$-comonotone if for all $(x,y),(x^\prime,y^\prime)\in\graph A$
\[
 \langle y-y^\prime, x-x^\prime\rangle \geq \rho\|y-y^\prime\|^2.
 \]
The operator $A$ is said to be maximally (co)monotone if there exists no other (co)monotone operator $B$ for which $\graph A \subset \graph B$ properly.
\end{defin}
If $A$ is $0$-monotone we simply say it is monotone.
When $\rho<0$, $\rho$-comonotonicity is also referred to as $|\rho|$-cohypomonotonicity.
\begin{defin}[Lipschitz continuity and cocoercivity]
    Let $\mathcal D\subseteq \R^n$ be a nonempty subset of $\R^n$. A single-valued operator $A:\mathcal D\to \R^n$ is said to be $L$-Lipschitz continuous if for any $x,x^\prime\in \mathcal D$
	\[
	\|Ax - Ax^\prime\| \leq L\|x-x^\prime\|,
	\]
	and $\beta$-cocoercive if 
	\[
		\langle x-x^\prime, Ax - Ax^\prime \rangle \geq \beta\|Ax - Ax^\prime\|^2. 
	\]
	Moreover, $A$ is said to be nonexpansive if it is $1$-Lipschitz continuous, and firmly nonexpansive if 
	it is $1$-cocoercive. 
\end{defin}
A $\beta$-cocoercive operator is also $\beta^{-1}$-Lipschitz continuity by direct implication of Cauchy-Schwarz. 
The resolvent operator  $J_A=(\id + A)^{-1}$ is firmly nonexpansive (with $\dom J_A= \R^n$) if and only if $A$ is (maximally) monotone.

We will make heavy use of the Fenchel-Young inequality.
For all $a,b \in \R^n$ and $e > 0$ we have,

\begin{align}
2\langle a,b\rangle &\leq e \|a\|^2 + \tfrac{1}{e} \|b\|^2 \\
\|a+b\|^2 &\leq (1+e) \|a\|^2 + (1+\tfrac{1}{e}) \|b\|^2 \\
-\|a-b\|^2 &\leq -\tfrac{1}{1+e} \|a\|^2 + \tfrac{1}{e} \|b\|^2
\end{align}
}
\section{Proof for SEG+}\label{app:seg+}
    
\begin{appendixproof}{thm:SEG+}
Following \citep{hsieh2020explore} closely, define the reference state $\tilde{\bar{z}}^k :=z^k - \gamma Fz^k$ to be the exploration step using the \emph{deterministic} operator and denote the second stepsize as $\eta_k := \alpha_k \gamma$.
We will let $\zeta$ denote the additive noise term, i.e. $\hat F(z,\xi) := F(z)+\zeta$.
Expanding the distance to solution,
\begin{equation}
\label{eq:proof:seg+:main}
\begin{aligned}
\|z^{k+1}-z^{\star}\|^{2} &=\|z^{k}-\eta_{k} \hat F(\bar z^k,\bar\xi_k)-z^{\star}\|^{2} \\
&=\|z^{k}-z^{\star}\|^{2}-2 \eta_{k}\langle\hat F(\bar z^k,\bar\xi_k), z^{k}-z^{\star}\rangle+\eta_{k}^{2}\|\hat F(\bar z^k,\bar\xi_k)\|^{2} \\
&=
\|z^{k}-z^{\star}\|^{2}
-2 \eta_{k}\langle\hat F(\bar z^k,\bar\xi_k), \tilde{\bar{z}}^k-z^{\star}\rangle
-2 \gamma \eta_{k}\langle\hat F(\bar z^k,\bar\xi_k), F(z^{k})\rangle
+\eta_{k}^{2}\|\hat F(\bar z^k,\bar\xi_k)\|^{2}.
\end{aligned}
\end{equation}
Recall that the operator is assumed to be linear $Fz = Bz+v$ in which case we have,
\begin{equation}
\label{eq:proof:seg+:Fbar}
\begin{aligned}
\hat F(\bar z^k,\bar\xi_k) &= B\bar z^k + v + \bar{\zeta}_k\\
= & B(z^k - \gamma \hat F(z^k,\xi_k)) + v + \bar{\zeta}_k \\
= & B(z^k - \gamma Bz^k - \gamma v  - \gamma \zeta_k) + v + \bar{\zeta}_k \\
= & B(z^k - \gamma (Bz^k + v)) + v - \gamma B\zeta_k + \bar{\zeta}_k \\
= & F(\tilde{\bar z}^k) - \gamma B\zeta_k + \bar{\zeta}_k.
\end{aligned}
\end{equation}
The two latter terms are zero in expectation due to the unbiasedness from \Cref{ass:AsymPrecon:stoch:unbiased}, which lets us write the terms on the RHS of \eqref{eq:proof:seg+:main} as,
\begin{align}
-\mathbb E_k\langle\hat F(\bar z^k,\bar\xi_k), \tilde{\bar{z}}^k-z^{\star}\rangle 
         &= -\langle F(\tilde{\bar z}^k), \tilde{\bar{z}}^k-z^{\star}\rangle\label{eq:proof:seg+:eq1} \\
-\mathbb E_k \langle\hat F(\bar z^k,\bar\xi_k), F(z^{k})\rangle 
         &= -\langle F(\tilde{\bar z}^k), F(z^{k})\rangle \label{eq:proof:seg+:eq2} \\
\mathbb E_k \|\hat F(\bar z^k,\bar\xi_k)\|^{2}
         &= \|F(\tilde{\bar z}^k)\|^2 + \mathbb E_k\|\gamma B\zeta_k\|^2 + \mathbb E_k\|\bar{\zeta}_k\|^2. \label{eq:proof:seg+:eq3}
\end{align}
We can bound \eqref{eq:proof:seg+:eq1} directly through the weak MVI in \Cref{ass:AsymPrecon:Minty:Struct} which might still be positive,
\begin{equation}
-\langle F(\tilde{\bar{z}}^k), \tilde{\bar{z}}^k-z^{\star}\rangle
\leq -\rho \|F(\tilde{\bar{z}}^k)\|^2.
\end{equation}
For the latter two terms of \eqref{eq:proof:seg+:eq3} we have
\begin{equation}
\mathbb E_k\|\gamma B\zeta_k\|^2 + \mathbb E_k\|\bar{\zeta}_k\|^2 
  = \gamma^2 \mathbb E_k\|F(\zeta_k)\rbl{-F(0)}\|^2 + \mathbb E_k\|\bar{\zeta}_k\|^2
  \leq (\gamma^2L_F^2 + 1) \sigma^2_F,
\end{equation}
where the last inequality follows from Lipschitz in \Cref{ass:AsymPrecon:M:Lip} and bounded variance in \Cref{ass:AsymPrecon:stoch:boundedvar}.

Combining everything into \eqref{eq:proof:seg+:main} we are left with
\begin{equation}
\label{eq:proof:seg+:main2}
\begin{aligned}
\mathbb E_k \|z^{k+1}-z^{\star}\|^{2} &\leq
\|z^{k}-z^{\star}\|^{2}
+ \eta_k^2(\gamma^2L_F^2 + 1) \sigma^2_F
-2\gamma\eta_k\langle F(\tilde{\bar z}^k), F(z^{k})\rangle
+(\eta_k^2 - 2\eta_k \rho) \|F(\tilde{\bar z}^k)\|^2
\end{aligned}
\end{equation}
By assuming the stepsize condition, $\rho \geq (\eta_k - \gamma)/2$, we have $\eta_k^2 - 2\eta_k \rho \leq \gamma \eta_k$.
This allows us to complete the square,
\begin{equation}
\begin{aligned}
-2\gamma\eta_k\langle F(\tilde{\bar z}^k), F(z^{k})\rangle
+(\eta_k^2 - 2\eta_k \rho) \|F(\tilde{\bar z}^k)\|^2
&\leq -2\gamma\eta_k\langle F(\tilde{\bar z}^k), F(z^{k})\rangle
+\gamma \eta_k \|F(\tilde{\bar z}^k)\|^2 \\
&= \gamma\eta_k (\|F(z^k)-F(\tilde{\bar z}^k)\|^2 - \|F(z^k)\|^2) \\
&\leq \gamma\eta_k (\gamma^2L_F^2 - 1) \|F(z^k)\|^2,
\end{aligned}
\end{equation}
where the last inequality follows from Lipschitzness of $F$ and the definition of the update rule.
Plugging into \eqref{eq:proof:seg+:main2} we are left with
\begin{equation}
\label{eq:proof:seg+:main3}
\begin{aligned}
\mathbb E_k \|z^{k+1}-z^{\star}\|^{2} &\leq
\|z^{k}-z^{\star}\|^{2}
+ \eta_k^2(\gamma^2L_F^2 + 1) \sigma^2_F
- \gamma\eta_k (1-\gamma^2L_F^2) \|F(z^k)\|^2.
\end{aligned}
\end{equation}
The result is obtained by total expectation and summing.
\end{appendixproof}

\section{Proof for smooth unconstrained case}\label{app:smooth}
    
\begin{lem}\label{lem:recur}
    Consider the recurrent relation \(B_{k+1} = \xi_k B_k + d_k\) such that \(\xi_k >0\) for all \(k\geq 0\). Then 
    \[
        B_{k+1}
            {}={}
        \big(%
            \Pi_{p=0}^{k}\xi_{p}
        \big)
        \left(
            B_{0}+\sum_{\ell=0}^{k}\frac{d_{\ell}}{\Pi_{p=0}^{\ell}\xi_{p}}
        \right). 
    \]
\end{lem}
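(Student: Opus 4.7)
The plan is to proceed by a straightforward induction on \(k\), since the claimed closed form is exactly what one obtains by iteratively unrolling the one-step recursion and dividing through by the accumulated product.

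For the base case \(k=0\), I would simply check that the formula reduces to \(B_1 = \xi_0 B_0 + d_0\), which matches the defining recursion. For the inductive step, assuming the formula holds at index \(k-1\), I would substitute the induction hypothesis into \(B_{k+1} = \xi_k B_k + d_k\), absorb the extra factor of \(\xi_k\) into the product \(\Pi_{p=0}^{k-1}\xi_p\) to get \(\Pi_{p=0}^{k}\xi_p\), and then rewrite the trailing \(d_k\) as \(\Pi_{p=0}^{k}\xi_p \cdot d_k / \Pi_{p=0}^{k}\xi_p\) so that it joins the sum as the new \(\ell=k\) term.

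There is essentially no obstacle here: positivity of the \(\xi_p\) is only needed to guarantee that the divisions \(d_\ell / \Pi_{p=0}^{\ell}\xi_p\) are well-defined, so that the algebraic manipulation is legitimate. The lemma is a bookkeeping statement, and the induction writes itself once one aligns the product indices carefully. I would present it in at most a few lines without further commentary, since the main purpose of the result is to be invoked later (presumably in the almost sure convergence arguments of the appendix) rather than to contain any intrinsic difficulty.
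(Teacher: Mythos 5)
Your induction is correct and is exactly the intended argument: the paper itself states \Cref{lem:recur} without proof, treating it as the routine unrolling of the recursion that you carry out, with positivity of the \(\xi_p\) only ensuring the divisions by \(\Pi_{p=0}^{\ell}\xi_p\) are well-defined. Nothing is missing; your base case and index bookkeeping in the inductive step match the stated closed form.
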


\begin{ass}\label{ass:stepsizes:Ae0:2} 
\(\gamma\in (\lfloor-2\rho\rfloor_+, \nicefrac1{L_F})\) and 
\rbl{for positive real valued $b$,}
\begin{equation}\label{eq:mu}
    \mu \coloneqq \gamma^2(1 - \gamma^2L_F^2(1+{b}^{-1})) > 0.
\end{equation}
\end{ass}

\begin{thm}\label{app:thm:BiasCorr:2}
     Suppose that \cref{ass:AsymPrecon,ass:AsymPrecon:stoch,ass:AsymPrecon:stoch:stocLips} hold. Suppose in addition that \cref{ass:stepsizes:Ae0:2} holds and that \(\seq{\alpha_k}\subset(0,1)\) is a diminishing sequence such that 
     \begin{align}\label{app:eq:condest:Ae0}
         2\gamma L_{\hat{F}}\sqrt{\alpha_{0}}+\Big(1+\big((b+1)\gamma^{2}L_{F}^{2}\big)\gamma^{2}L_{\hat{F}}^{2}\Big)\alpha_{0}\leq1+\tfrac{2\rho}{\gamma}.
     \end{align}
     Consider the sequence \(\seq{z^k}\) generated by \Cref{alg:WeakMinty:Sto:Struct}.
     Then, the following estimate holds
\begin{align}
\label{app:thm:BiasCorr:2:rate}
\sum_{k=0}^K \frac{\alpha_k}{\sum_{j=0}^K \alpha_j}\mathbb E[\|F(z^k)\|^2] 
    {}\leq{}
\frac{%
    \|z^{0} - z^\star\|^2 + \eta\gamma^2\|F(z^{0})\|^2 
        {}+{}
    C \sigma_F^2\gamma^2 \sum_{j=0}^K \alpha_j^2
     }{%
        \mu \sum_{j=0}^K \alpha_j
    },
\end{align}
where \(C = 1+2\eta\big((\gamma^{2}L_{\hat{F}}^{2}+1)+2\alpha_{0}\big)\)
and $\eta=\tfrac{1}{2}(b+1)\gamma^{2}L_{F}^{2}+\tfrac{1}{\gamma L_{\hat{F}}\sqrt{\alpha_{0}}}$. 
 \end{thm}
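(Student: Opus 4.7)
The plan is to introduce the deterministic reference exploration point $\tilde{\bar z}^k := z^k - \gamma F(z^k)$ and the discrepancy $u^k := \bar z^k - \tilde{\bar z}^k$. Substituting Line~\ref{state:barzL:sto:Smooth} reveals that the bias correction is engineered precisely so that
\[
u^k = (1-\alpha_k)\, u^{k-1} - \gamma\, \delta^k,\qquad \delta^k := \bigl[\hat F(z^k,\xi_k) - F z^k\bigr] - (1-\alpha_k)\bigl[\hat F(z^{k-1},\xi_k) - F z^{k-1}\bigr],
\]
where $\mathbb{E}_k[\delta^k] = 0$ and, by \Cref{ass:AsymPrecon:stoch:stocLips,ass:AsymPrecon:stoch:boundedvar}, $\mathbb{E}_k\|\delta^k\|^2 \leq 2L_{\hat F}^2\|z^k - z^{k-1}\|^2 + 2\alpha_k^2\sigma_F^2$. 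The initialization $\bar z^{-1}=z^{-1}=z^0$ in \Cref{alg:WeakMinty:Sto:Struct} yields $u^{-1} = \gamma F(z^0)$, which already foreshadows the $\eta\gamma^2\|F(z^0)\|^2$ term appearing in \eqref{app:thm:BiasCorr:2:rate}.

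Next, I would expand $\|z^{k+1}-z^\star\|^2$ via Line~\ref{state:z+:sto:Smooth}, take conditional expectation in $\bar\xi_k$ to replace $\hat F(\bar z^k,\bar\xi_k)$ by $F(\bar z^k)$ modulo an $\alpha_k^2\gamma^2\sigma_F^2$ variance term, and decompose $z^k - z^\star = (z^k - \bar z^k) + (\bar z^k - z^\star)$ using the identity $z^k - \bar z^k = \gamma F(z^k) - u^k$. Invoking weak MVI (\Cref{ass:AsymPrecon:Minty:Struct}) at $\bar z^k$, completing the square on $-2\alpha_k\gamma^2\langle F(\bar z^k), F(z^k)\rangle$, and applying Young's with the free parameter $b$ of \Cref{ass:stepsizes:Ae0:2} to split $\|F(\bar z^k) - F(z^k)\|^2 \leq (1+b^{-1})\gamma^2 L_F^2\|F(z^k)\|^2 + (1+b)L_F^2\|u^k\|^2$ leads to a per-step descent of the form
\[
\mathbb{E}_k\|z^{k+1}-z^\star\|^2 \leq \|z^k-z^\star\|^2 - \alpha_k\mu\|F(z^k)\|^2 - \alpha_k\gamma^2\bigl(1+\tfrac{2\rho}{\gamma}-\alpha_k\bigr)\|F(\bar z^k)\|^2 + \alpha_k(1+b)\gamma^2 L_F^2\|u^k\|^2 + 2\alpha_k\gamma\langle F(\bar z^k), u^k\rangle + \alpha_k^2\gamma^2\sigma_F^2,
\]
with $\mu$ exactly as in \eqref{eq:mu}.

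The last step is to introduce the Lyapunov function $V^k := \|z^k-z^\star\|^2 + \eta\|u^{k-1}\|^2$ and combine the descent with the squared recursion $\mathbb{E}_k\|u^k\|^2 \leq (1-\alpha_k)^2\|u^{k-1}\|^2 + 2\gamma^2L_{\hat F}^2\|z^k - z^{k-1}\|^2 + 2\gamma^2\alpha_k^2\sigma_F^2$. Rewriting $\|z^k-z^{k-1}\|^2 = \alpha_{k-1}^2\gamma^2\|\hat F(\bar z^{k-1},\bar\xi_{k-1})\|^2$ and bounding its expectation by $2\|F(\bar z^{k-1})\|^2 + 2\sigma_F^2$ folds the induced $\|F(\bar z^{k-1})\|^2$ contribution into the slack of the previous descent step (this is what the $\gamma^2 L_{\hat F}^2$ and $\alpha_0$ factors in \eqref{app:eq:condest:Ae0} bookkeep). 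The cross term $2\alpha_k\gamma\langle F(\bar z^k), u^k\rangle$ is dispatched with Young's at weight of order $\gamma L_{\hat F}\sqrt{\alpha_0}$, which simultaneously explains the $\tfrac{1}{\gamma L_{\hat F}\sqrt{\alpha_0}}$ summand inside $\eta$ and the $2\gamma L_{\hat F}\sqrt{\alpha_0}$ summand in condition \eqref{app:eq:condest:Ae0}. Telescoping the one-step drop of $V^k$, taking total expectation, and dividing by $\mu\sum_j\alpha_j$ then yields \eqref{app:thm:BiasCorr:2:rate}.

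The main obstacle is precisely this coupled bookkeeping: three negative quadratic terms ($\|F(z^k)\|^2$, $\|F(\bar z^k)\|^2$, $\|u^k\|^2$) must dominate three distinct positive inflations (the Young split of $\|F(\bar z^k)-F(z^k)\|^2$, the cross term $\langle F(\bar z^k),u^k\rangle$, and the $\|\hat F(\bar z^{k-1},\bar\xi_{k-1})\|^2$ contribution from the $u^k$-recursion), all while preserving $\rho>-\gamma/2$ in the limit $\alpha_0\to 0$. The two-term additive structure of $\eta$ corresponds exactly to the two Young inequalities that are needed, and the requirement that the $\|F(\bar z^k)\|^2$ coefficient stay nonpositive after absorbing both the $\sqrt{\alpha_0}$-sized cross-term inflation and the $\alpha_0^2\gamma^4 L_{\hat F}^2$ variance inflation crystallizes into condition \eqref{app:eq:condest:Ae0}, which recovers the deterministic threshold $\rho>-\gamma/2$ asymptotically.
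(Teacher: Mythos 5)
Your proposal is correct and follows essentially the same route as the paper's proof: the same bias-tracking variable $u^k=\bar z^k-z^k+\gamma F(z^k)$ with the same martingale recursion from unbiasedness and mean-square Lipschitzness, the same decomposition of $\langle F(\bar z^k),z^k-z^\star\rangle$ via weak MVI, polarization and the two Young parameters $b$ and $\varepsilon=\gamma L_{\hat F}\sqrt{\alpha_0}$, and a Lyapunov function that differs only cosmetically (the paper carries the term $2\eta\gamma^2L_{\hat F}^2\|z^{k+1}-z^k\|^2$ explicitly in the potential, while you substitute $\|z^k-z^{k-1}\|=\alpha_{k-1}\gamma\|\hat F(\bar z^{k-1},\bar\xi_{k-1})\|$ and fold it into the previous step's $\|F(\bar z^{k-1})\|^2$ slack, which is an equivalent reindexing). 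The one detail to tighten is your bound $\mathbb E\big[\|\hat F(\bar z^{k-1},\bar\xi_{k-1})\|^2\big]\le 2\|F(\bar z^{k-1})\|^2+2\sigma_F^2$: the bias--variance identity gives the factor-one bound $\|F(\bar z^{k-1})\|^2+\sigma_F^2$, and it is this tighter version that reproduces exactly the constants in \eqref{app:eq:condest:Ae0} and in $C$.
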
 

\begin{appendixproof}{app:thm:BiasCorr:2}
The proof relies on establishing a (stochastic) descent property on the following potential function 
\begin{align*}
    \U_{k+1} 
        {}\coloneqq{}
    \|z^{k+1}-z^{\star}\|^{2}+A_{k+1}\|u^{k}\|^{2}+B_{k+1}\|z^{k+1}-z^{k}\|^{2}. %
\end{align*}
where \(u^k \coloneqq \bar z^{k} - z^k + \gamma F(z^k)\) \rbl{measures the difference of the bias-corrected step from the deterministic exploration step}, and \(\seq{A_{k}}\), \(\seq{B_k}\) are positive scalar parameters to be identified. 
We proceed to consider each term individually.

Let us begin by quantifying how well $\bar z^k$ estimates $z^k - \gamma F(z^k)$. 
\begin{align*}
u^k = \bar z^{k} - z^k& + \gamma F(z^k) = \gamma F(z^k) - \gamma \hat F(z^k, \xi_k) + (1-\betaparam_k) (\bar z^{k-1} - z^{k-1} + \gamma \hat F(z^{k-1}, \xi_k)).
\end{align*}
Therefore,
\begin{align*}
    \|u^k \|^2 
        {}={}&
    \|\gamma F(z^k) - \gamma \hat F(z^k, \xi_k)  + (1-\betaparam_k)(\gamma \hat F(z^{k-1}, \xi_k) - \gamma F(z^{k-1}))\|^2
    + (1-\betaparam_k)^2 \|u^{k-1}\|^2
    \\
    & + 2 (1-\betaparam_k) \langle \bar z^{k-1} - z^{k-1} + \gamma F(z^{k-1}), \gamma F(z^k) - \gamma \hat F(z^k, \xi_k)  + (1-\betaparam_k)(\gamma \hat F(z^{k-1}, \xi_k) - \gamma F(z^{k-1}))\rangle.
\end{align*}
Conditioned on \(\F_k\), in the inner product the left term is known and the right term has an expectation that equals zero. Therefore, we obtain
{\mathtight
\begin{align*}
\mathbb E[\|u^{k}\|^2 \mid \F_k] 
    {}={}&
\mathbb E[ \|(1-\betaparam_{k})\left(\gamma F(z^{k})-\gamma\hat{F}(z^{k},\xi_{k})+\gamma\hat{F}(z^{k-1},\xi_{k})-\gamma F(z^{k-1})\right)+\betaparam_{k}\left(\gamma F(z^{k})-\gamma\hat{F}(z^{k},\xi_{k})\right)\|^{2} \mid \F_k]
\\
    &{}+{}
(1-\betaparam_k)^2 \|u^{k-1}\|^2 
\\
    {}\leq{}&
(1-\betaparam_k)^2 \|u^{k-1} \|^2 
    {}+{}
2(1-\betaparam_k)^2 \gamma^2\mathbb E[\| \hat F(z^k, \xi_k) -\hat F(z^{k-1}, \xi_k) \|^2 \mid \F_k] 
\\
    &{}+{} 
2\betaparam_{k}^2 \gamma^2 \mathbb E[\|F(z^k) - \hat F(z^k, \xi_k) \|^2  \mid \F_k]
\\
    {}\leq{}&
(1-\betaparam_k)^2 \|u^{k-1}\|^2 + 2(1-\betaparam_k)^2 \gamma^2 L_{\hat F}^2 \|z^k - z^{k-1}\|^2 + 2\betaparam_k^2 \gamma^2 \sigma_F^2
 \numberthis \label{eq:normuk}
\end{align*}}%
where in the first inequality we used Young inequality and the fact that the second moment is larger than the variance, and \cref{ass:AsymPrecon:stoch:boundedvar,ass:AsymPrecon:stoch:stocLips} were used in the second inequality.

 By \cref{state:z+:sto:Smooth}, the equality 
\begin{align}
\label{eq:from_kp1_to_k}
\|z^{k+1} - z^\star\|^2 = \|z^k - z^\star\|^2 - 2 \alpha_k \gamma \langle \hat F(\bar z^k, \bar \xi_k),  z^k - z^\star\rangle + \alpha_k^2\gamma^2 \| \hat F(\bar z^k, \bar \xi_k)\|^2,
\end{align}
holds. 
The inner product in \eqref{eq:from_kp1_to_k} can be upper bounded using Young inequalities with positive parameters $\varepsilon_k$, \(k \geq0\), and $b$ as follows. 
\begin{align*}
    \mathbb E[\langle  - \gamma \hat F(\bar z^k, \bar \xi_k),  z^k - z^\star\rangle \mid \bar{\F}_k] 
        {}={}&
    -\gamma \langle  F(\bar z^k), z^k - \bar z^k\rangle -\gamma \langle  F(\bar z^k), \bar z^k - z^\star\rangle  
    \\
        {}={}&
    - \gamma^2 \langle  F(\bar z^k),  F(z^k) \rangle 
    + \gamma\langle F(\bar z^k), \bar z^k - z^k + \gamma F(z^k)\rangle
    -\gamma \langle  F(\bar z^k), \bar z^k - z^\star\rangle  
    \\
        {}\leq{}&
    \gamma^2 \Big(\frac{1}{2} \|F(\bar z^k) - F(z^k)\|^2 - \frac{1}{2}\|F(\bar z^k)\|^2 - \frac{1}{2}\|F(z^k)\|^2\Big) + \frac{\gamma^2 \varepsilon_k}{2} \|F(\bar z^k)\|^2
    \\
        &{}+{} \frac{1}{2\varepsilon_k}\|\bar z^k - z^k + \gamma F(z^k)\|^2 - \gamma \rho \|F(\bar z^k)\|^2 
    \\
        {}\leq{}&
    \gamma^2 L_F^2 \frac{1+{b}}{2} \|u^k\|^2 + \frac{1+{b}^{-1}}{2}\gamma^4 L_F^2 \|F(z^k)\|^2- \frac{\gamma^2}{2}\|F(\bar z^k)\|^2
    \\
        &{}-{}
    \frac{\gamma^2}{2}\|F(z^k)\|^2
        {}+{}
    \frac{\gamma^2 \varepsilon_k}{2} \|F(\bar z^k)\|^2 + \frac{1}{2\varepsilon_k}\|u^k \|^2 - \gamma \rho \|F(\bar z^k)\|^2 
    \\
        {}={}& \big(\gamma^2 L_F^2 \frac{1+{b}}{2} + \frac{1}{2\varepsilon_k}\big)\|u^k\|^2 + \frac{\gamma^2(\gamma^2L_F^2(1+{b}^{-1})- 1)}{2} \|F(z^k)\|^2 
    \\
        &{}+{}
     \big(\frac{\gamma^2({\varepsilon_k}-1)}{2} - \gamma \rho\big)\|F(\bar z^k)\|^2. 
     \numberthis \label{eq:innerprod:1}
\end{align*}
Conditioning \eqref{eq:innerprod:1} with 
\(
    \mathbb E\big[ \cdot \mid {\F}_k\big] 
        {}={}
    \mathbb E\big[ \mathbb E\big[ \cdot \mid \bar{\F}_k\big] \mid {\F}_k\big] 
\), since \(\F_k \subset \bar{\F}_{k}\), yields 
\begin{align*}
    2\mathbb E[\langle  - \gamma \hat F(\bar z^k, \bar \xi_k),  z^k - z^\star\rangle \mid {\F}_k] 
        {}\leq{}& 
    \big(\gamma^2 L_F^2(1+b) + \frac{1}{\varepsilon_k}\big)\mathbb E[\|u^k\|^2 \mid \F_k ] {}-{}
    \mu \|F(z^k)\|^2
    \\
        &{}+{}
     \big(\gamma^2({\varepsilon_k}-1) - 2\gamma \rho\big)\E[]{\|F(\bar z^k)\|^2\mid \F_k},
     \numberthis \label{eq:innerprod:2}
\end{align*}
where \(\mu\) was defined in \eqref{eq:mu}. 

\rbl{
The condition expectation of the third term in \eqref{eq:from_kp1_to_k} is bounded through \cref{ass:AsymPrecon:stoch:boundedvar} by
\begin{align*}
\mathbb E\left[  \|\hat F(\bar z^k, \bar \xi_k)\|^2\mid  \F_k\right] 
= \mathbb E\left[ \mathbb E[ \|\hat F(\bar z^k, \bar \xi_k)\|^2\mid  \bar{\F}_k] \mid  \F_k\right] 
\leq \|F(\bar z^k)\|^2 + \sigma_F^2,
\end{align*}
}
which in turn implies 
\begin{align*}
    \E[]{\|z^{k+1}-z^{k}\|^{2}\mid\F_{k}}
        {}={}&
    \alpha_{k}^{2}\gamma^{2}\mathbb{E}\left[\|\hat{F}(\bar{z}^{k},\bar{\xi}_{k})\|^{2}\mid\mathcal{F}_{k}\right]
        {}\leq{}
    \alpha_{k}^{2}\gamma^{2}\mathbb{E}\left[\|F\bar{z}^{k}\|^{2}\mid\mathcal{F}_{k}\right]+\alpha_{k}^{2}\gamma^{2}\sigma_F^{2}
 \numberthis\label{eq:normFzbar}
\end{align*}

Combining \eqref{eq:innerprod:2}, \eqref{eq:normFzbar}, and \eqref{eq:from_kp1_to_k} yields
\begin{align*}
\mathbb E[\|z^{k+1} &- z^\star\|^2 + A_{k+1}  \|u^{k}\|^2 + B_{k+1} \|z^{k+1} - z^k\|^2  \mid \F_k]
\\
    {}\leq{}&
\|z^k - z^\star\|^2 
    {}+{}
    \Big(%
    A_{k+1}+ \alpha_k  \big(\gamma^2 L_F^2(1+b) + \frac{1}{\varepsilon_k}\big)
    \Big)
    \mathbb E[\|u^k\|^2 \mid \F_k ] 
    {}-{}
    \alpha_k  \mu \|F(z^k)\|^2
    \\
    &{}+{}
    \left(\alpha_{k}\big(\gamma^{2}(\varepsilon_{k}-1)-2\gamma\rho\big)+\alpha_{k}^{2}\gamma^{2}\right) \E[]{\|F(\bar z^k)\|^2\mid \F_k}
    {}+{}\alpha_k^2\gamma^2\sigma_F^2
\\
    &{}+{}
B_{k+1}\alpha_{k}^{2}\gamma^{2}\mathbb{E}\left[\|F\bar{z}^{k}\|^{2}\mid\mathcal{F}_{k}\right]+B_{k+1}\alpha_{k}^{2}\gamma^{2}\sigma_F^{2}.
\numberthis
\label{eq:Descent1}
\end{align*}
Further using \eqref{eq:normuk} and denoting  
\begin{align*}
    X_1^k 
        {}\coloneqq{}& 
    \alpha_{k}\left(\gamma^{2}L_F^{2}(1+b)+\tfrac{1}{\varepsilon_{k}}\right)+A_{k+1},
    \\
    X_2^k
        {}\coloneqq{}&
    \alpha_{k}\left(\gamma^{2}(\varepsilon_k-1)-2\rho\gamma+\alpha_{k} \gamma^{2}\right)
\end{align*}
leads to 
\begin{align*}
    \E[]{\U_{k+1} \mid \F_k} - \U_k 
        {}\leq{}&
    -\alpha_{k}\mu\|F(z^{k})\|^{2}+\left(X_{1}^k(1-\betaparam_{k})^{2}-A_{k}\right)\|u^{k-1}\|^{2} 
    \\
        &{}+{}
    \left(2X_{1}^k(1-\betaparam_{k})^{2}\gamma^{2}L_{\hat{F}}^{2}-B_{k}\right)\|z^{k}-z^{k-1}\|^{2} 
        {}+{}
    \left(X_{2}^k+B_{k+1}\alpha_{k}^{2}\gamma^{2}\right)\E[]{\|F(\bar{z}^{k})\|^{2}\mid\F_{k}}
    \\
        &{}+{}
    \left(B_{k+1}\alpha_{k}^{2}+\alpha_{k}^{2}+2X_{1}^k\betaparam_{k}^{2}\right)\gamma^{2}\sigma_F^{2}. 
    \numberthis\label{eq:Descent:U:Ae0}
\end{align*}
Having established \eqref{eq:Descent:U:Ae0}, set \(A_k=A\), \(B_k= 2A\gamma^2L_{\hat F}^2\), and \(\varepsilon_k =\varepsilon\) to obtain by the law of total expectation that
\begin{align*}
    \E[]{\U_{k+1}} - \E[]{\U_k} 
        {}\leq{}&
    -\alpha_{k}\mu\E[]{\|F(z^{k})\|^{2}}+\left(X_{1}^k(1-\alpha_{k})^{2}-A\right)\E[]{\|u^{k-1}\|^{2}}
    \\
        &{}+{}
    2\gamma^{2}L_{\hat{F}}^{2}\left(X_{1}^k(1-\alpha_{k})^{2}-A\right)\E[]{\|z^{k}-z^{k-1}\|^{2}} 
        {}+{}
    \left(X_{2}^k+2A\gamma^4L_{\hat F}^2\alpha_{k}^{2}\right)\E[]{\|F(\bar{z}^{k})\|^{2}}
    \\
        &{}+{}
    \left(2A\gamma^2L_{\hat F}^2+1+2X_{1}^k\right)\alpha_{k}^{2}\gamma^{2}\sigma_F^{2}. 
    \numberthis\label{eq:Descent:U:Ae0:2}
\end{align*}
\rbl{
To get a recursion we require
\begin{equation}\label{eq:smooth:requirements}
X_{1}^k(1-\alpha_{k})^{2}-A \leq 0 \quad \text{and} \quad X_{2}^k+2A\gamma^4L_{\hat F}^2\alpha_{k}^{2} \leq 0.
\end{equation}
By developing the first requirement of \eqref{eq:smooth:requirements} we have,
\begin{equation}
0 \geq X_{1}^k(1-\alpha_{k})^{2}-A
     = \alpha_{k}(1-\alpha_{k})^{2}\left(\gamma^{2}L_F^{2}(1+b)+\tfrac{1}{\varepsilon}\right)+\alpha_k(\alpha_{k}-2)A.
\end{equation}
Equivalently, $A$ needs to satisfy
\begin{equation}
A \geq \frac{(1-\alpha_{k})^{2}}{2-\alpha_k}\left(\gamma^{2}L_F^{2}(1+b)+\tfrac{1}{\varepsilon}\right).
\end{equation}
for any $\alpha_k \in (0,1)$.
Since $\frac{(1-\alpha_{k})^{2}}{2-\alpha_k} \leq \tfrac{1}{2}$ given $\alpha_k \in (0,1)$ it suffice to pick
\begin{equation}\label{eq:smooth:A}
A = \tfrac{1}{2}\left((b+1)\gamma^{2}L_{F}^{2}+\tfrac{1}{\varepsilon}\right).
\end{equation}
For the second requirement of \eqref{eq:smooth:requirements} note that we can equivalently require that the following quantity is negative
\begin{align*}
    \tfrac{1}{\alpha_{k}\gamma^{2}}\left(X_{2}^{k}+2A\gamma^{4}L_{\hat{F}}^{2}\alpha_{k}^{2}\right) 
        &{}={}
    \varepsilon-1-\tfrac{2\rho}{\gamma}+\alpha_{k}+2A\gamma^{2}L_{\hat{F}}^{2}\alpha_{k} \\
        &{}\leq{}
    \varepsilon-1-\tfrac{2\rho}{\gamma}+\left(1+\left((b+1)\gamma^{2}L_{F}^{2}+\tfrac{1}{\varepsilon}\right)\gamma^{2}L_{\hat{F}}^{2}\right)\alpha_{0}
\end{align*}
where we have used that $\alpha_k \leq \alpha_0$ and the choice of $A$ from \eqref{eq:smooth:A}.
}
Setting the Young parameter \(\varepsilon = \gamma L_{\hat{F}}\sqrt{\alpha_{0}}\) we obtain that 
\(X_{2}^{k}+2A\gamma^{4}L_{\hat{F}}^{2}\alpha_{k}^{2}\leq 0\) owing to \eqref{app:eq:condest:Ae0}. 

On the other hand, the last term in \eqref{eq:Descent:U:Ae0:2} may be upper bounded by
\begin{align*}
    2A\gamma^{2}L_{\hat{F}}^{2}+1+2X_{1}^{k} 
        {}={}&
    1+\left((b+1)\gamma^{2}L_{F}^{2}+\tfrac{1}{\gamma L_{\hat{F}}\sqrt{\alpha_{0}}}\right)\left((\gamma^{2}L_{\hat{F}}^{2}+1)+2\alpha_{k}\right)
    \\
        {}\leq{}&
    1+\left((b+1)\gamma^{2}L_{F}^{2}+\tfrac{1}{\gamma L_{\hat{F}}\sqrt{\alpha_{0}}}\right)\left((\gamma^{2}L_{\hat{F}}^{2}+1)+2\alpha_{0}\right) = C. 
\end{align*}
Thus, it follows from \eqref{eq:Descent:U:Ae0:2} that 
\begin{align*}
    \E[]{\U_{k+1}} - \E[]{\U_k} 
        {}\leq{}&
    -\alpha_{k}\mu\E[]{\|F(z^{k})\|^{2}} 
        {}+{}
    C\alpha_{k}^{2}\gamma^{2}\sigma_F^{2}. 
\end{align*}
Telescoping the above inequality completes the proof.
\end{appendixproof}

\begin{appendixproof}{thm:BiasCorr:2}
The theorem is obtained as a particular instantiation of  \Cref{app:thm:BiasCorr:2}.

The condition in \eqref{eq:mu} can be rewritten as $b > \tfrac{\gamma^2L_F^2}{1-\gamma^2L_F^2}$.
A reasonable choice is $b = \tfrac{2\gamma^2L_F^2}{1-\gamma^2L_F^2}$.
Substituting back into $\mu$ we obtain
\begin{equation}
\begin{split}
\mu &= \gamma^2(1 - \gamma^2L_F^2(1+\tfrac{1-\gamma^2L_F^2}{2\gamma^2L_F^2})) = \tfrac{\gamma^2(1-\gamma^2L_F^2)}{2} > 0.
\end{split}
\end{equation}
Similarly, the choice of $b$ is substituted into $\eta$ and \eqref{app:eq:condest:Ae0} of \Cref{app:thm:BiasCorr:2}.

The rate in \eqref{app:eq:condest:Ae0} is further simplified by applying Lipschitz continuity of $F$ from \Cref{ass:AsymPrecon:M:Lip} to $\|Fz^0\|^2 = \|Fz^0-Fz^\star\|^2$.
The proof is complete by observing that the guarantee on the weighted sum can be converted into an expectation over a sampled iterate in the style of \citet{ghadimi2013stochastic}. 
\end{appendixproof}

\begin{ass}[almost sure convergence]\label{ass:stepsizes:Ae0}
    Let \(d\in[0,1]\), \(b>0\).
    Suppose that the following holds 
    \begin{enumerate}
        \item \label{ass:stepsizes:Ae0:1} the diminishing sequence \(\seq{\alpha_k}\subset(0,1)\) satisfies the classical conditions
        \[
        \textstyle
            \sum_{k=0}^\infty \alpha_k 
                {}={}
            \infty,\qquad 
            \bar{\alpha}
                {}\coloneqq{}
            \sum_{k=0}^\infty \alpha_k^2 
                {}<{}
            \infty; 
        \]
         \item\label{ass:stepsizes:Ae0:3} letting 
         \(
            c_k   
                {}\coloneqq{}
            (1+b) \gamma^2 L_F^2 + \frac{1}{\gamma L_{\hat{F}}}\alpha_{k}^{-d} 
        \)  for all \(k\geq 0\)
     \begin{equation}\label{eq:A0}
        \eta_{k} 
            {}\coloneqq{}
        \textstyle
        \sum_{\ell=k}^{\infty}
            \left(%
                c_{l}\alpha_{l}\Pi_{p=0}^{\ell}(1-\alpha_{p})^{2}
            \right) 
            {}<{}
        \infty,
        \qquad 
        \nu
            {}\coloneqq{}
        \sum_{k=0}^\infty \eta_{k+1}\alpha_{k}^{2}\left(\Pi_{p=0}^{k}\tfrac{1}{(1-\alpha_{p})^{2}}\right) < \infty,
    \end{equation}
    and 
\begin{align*}
    \gamma L_{\hat{F}}\alpha_{k}^d+\alpha_{k}
    +2\gamma^{2}L_{\hat{F}}^{2}\alpha_{k}
    \eta_{k+1}
        \Pi_{p=0}^{k}\tfrac{1}{(1-\alpha_{p})^{2}}
        {}\leq{} 
    1 + \tfrac{2\rho}{\gamma}.
    \numberthis
    \label{eq:app:thm:BiasCorr:ineq}
\end{align*}
     \end{enumerate} 
\end{ass}
Although at first look the above assumptions may appear involved, as shown in \Cref{app:thm:BiasCorr} classical stepsize choice of \(\tfrac{\alpha_0}{k+1}\) is sufficient to satisfy  \eqref{eq:A0}, and to ensure almost sure convergence provided that instead \eqref{eq:con2:d1} holds. Note that with this choice as \(k\) goes to infinity, \(\alpha_k\searrow 0\) and the deterministic range 
    \(
        \gamma + 2\rho > 0 
    \)
    is obtained.

\begin{thm}[almost sure convergence]\label{app:thm:BiasCorr}
Suppose that \cref{ass:AsymPrecon,ass:AsymPrecon:stoch,ass:AsymPrecon:stoch:stocLips} hold. 
Additionally, suppose the stepsize conditions in \cref{ass:stepsizes:Ae0:2,ass:stepsizes:Ae0}.
Then, the sequence \(\seq{z^k}\) generated by \Cref{alg:WeakMinty:Sto:Struct} converges almost surely to some \(z^\star\in \zer T\). Moreover, the following estimate holds
\begin{align}
\label{app:thm:BiasCorr:rate}
\sum_{k=0}^K \frac{\alpha_k}{\sum_{j=0}^K \alpha_j}\mathbb E[\|F(z^k)\|^2] 
    {}\leq{}
\frac{%
    \|z^{0} - z^\star\|^2 + \eta_0\gamma^2 \|F(z^{0})\|^2 
        {}+{}
    \bar C
    }{%
        \mu \sum_{j=0}^K \alpha_j
    },
\end{align}
where \(\bar C = 2\gamma^{2}\sigma_F^{2}\big((\gamma^{2}L_{\hat{F}}^{2}+1)\nu+\bar{\alpha}\big(\tfrac{1}{2}+(b+1)\gamma^{2}L_{F}^{2}+\tfrac{1}{\gamma L_{\hat{F}}}\big)\big)\) is finite. 

In particular, if \(\alpha_k = \tfrac{1}{k+r}\) for any positive natural number  \(r\), and \(d = 1\), then \cref{ass:stepsizes:Ae0:3} can be replaced by 
\begin{equation}\label{eq:con2:d1}
    (\gamma L_{\hat{F}}+1)\alpha_{k}+\rbl{2}\left((1+b)\gamma^{4}\rbl{L_{F}^{2}L_{\hat F}^{2}}\alpha_{k+1}+\gamma L_{\hat{F}}\right)\left(\alpha_{k+1}+1\right)\alpha_{k+1} 
        {}\leq{}
    1 + \tfrac{2\rho}{\gamma}.
\end{equation}
\end{thm}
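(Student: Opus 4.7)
The plan is to lift the expectation-level descent from the proof of \Cref{app:thm:BiasCorr:2} to a \emph{conditional} (pathwise) supermartingale inequality, then apply the Robbins--Siegmund lemma. Concretely, I would start from inequality \eqref{eq:Descent:U:Ae0} in the preceding proof, but now keep $A_k$, $B_k$, and $\varepsilon_k$ genuinely time-varying rather than collapsing them to constants. The aim is to pick these so that the coefficients of $\|u^{k-1}\|^2$, $\|z^k - z^{k-1}\|^2$, and $\E[]{\|F(\bar z^k)\|^2 \mid \F_k}$ on the right-hand side are all nonpositive \emph{for every $k$}, leaving only a descent term in $\|F(z^k)\|^2$ and a summable noise term in $\alpha_k^2 \sigma_F^2$.

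The natural choice is to set $B_k = 2 A_k \gamma^2 L_{\hat F}^2$ and $\varepsilon_k = \gamma L_{\hat F} \alpha_k^{-d}$, and to pick $A_k$ as the solution of the backward linear recursion $A_k \geq \alpha_k c_k(1-\alpha_k)^2 + A_{k+1}(1-\alpha_k)^2$, where $c_k$ is as in \cref{ass:stepsizes:Ae0:3}. Unrolling this recursion (an application of \Cref{lem:recur} run backward) yields exactly the closed form $A_k = \eta_k \cdot \prod_{p=0}^{k-1}(1-\alpha_p)^{-2}$ up to a multiplicative constant, whose finiteness is the content of the first half of \eqref{eq:A0}. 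The corresponding condition that $X_2^k + B_{k+1}\alpha_k^2\gamma^2 \leq 0$ unpacks to exactly \eqref{eq:app:thm:BiasCorr:ineq}, which is why that inequality appears in the hypothesis. With these choices, the potential $\U_k = \|z^k - z^\star\|^2 + A_k\|u^{k-1}\|^2 + B_k\|z^k - z^{k-1}\|^2$ satisfies, conditionally on $\F_k$,
\[
    \E[]{\U_{k+1} \mid \F_k} \leq \U_k - \alpha_k \mu \|F(z^k)\|^2 + D_k \gamma^2 \sigma_F^2,
\]
with $D_k$ bounded by a multiple of $\eta_{k+1}\alpha_k^2\prod_{p=0}^k(1-\alpha_p)^{-2} + \alpha_k^2$; the second half of \eqref{eq:A0} (the constant $\nu$) ensures $\sum_k D_k < \infty$.

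Given that inequality, the Robbins--Siegmund theorem yields two conclusions almost surely: $\U_k$ converges to some finite random variable (so $\seq{z^k}$ is bounded), and $\sum_k \alpha_k \|F(z^k)\|^2 < \infty$. Combined with $\sum_k \alpha_k = \infty$ from \cref{ass:stepsizes:Ae0:1}, this forces $\liminf_k \|F(z^k)\| = 0$ a.s. along some subsequence; by the $L_F$-Lipschitz continuity of $F$ and boundedness of $\seq{z^k}$, any limit point of such a subsequence is a zero of $T = F$. To upgrade liminf to full convergence, I would use the standard argument that almost sure convergence of $\|z^k - z^\star\|$ for \emph{every} $z^\star \in \zer T$ (which follows by re-running the Robbins--Siegmund argument with $z^\star$ replaced by an arbitrary zero, once the subsequence produces one) pins down a single limit. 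The rate \eqref{app:thm:BiasCorr:rate} is then obtained by taking total expectation, telescoping, and using the summability bounds on $D_k$ to get the constant $\bar C$.

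The main obstacle I anticipate is the bookkeeping for the time-varying $A_k$: verifying that the backward recursion really produces a finite nonnegative sequence with the right summability, and that the specific choice $\alpha_k = 1/(k+r)$ with $d=1$ reduces \cref{ass:stepsizes:Ae0:3} to the clean form \eqref{eq:con2:d1}. For the latter, the plan is to compute $\prod_{p=0}^{k}(1-\alpha_p)^2 = \bigl(\tfrac{r-1}{k+r}\bigr)^2$ up to constants, so that $\eta_k$ becomes a telescoping-style sum of order $\alpha_k$, and plugging into \eqref{eq:app:thm:BiasCorr:ineq} recovers \eqref{eq:con2:d1} after using $\alpha_{k+1}\leq \alpha_k$ and absorbing constants. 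Everything else is standard supermartingale bookkeeping once the potential function and its descent are in place.
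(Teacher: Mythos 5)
Your proposal follows essentially the same route as the paper's proof: the same potential with time-varying $A_k$, $B_k = 2A_k\gamma^2 L_{\hat F}^2$, solving the linear condition $X_1^k(1-\alpha_k)^2\leq A_k$ via \Cref{lem:recur} to get $A_{k+1}=\eta_{k+1}\Pi_{p=0}^{k}(1-\alpha_p)^{-2}$, identifying \eqref{eq:app:thm:BiasCorr:ineq} with the second condition, invoking Robbins--Siegmund with the summability of the noise via $\nu$, and then the integral-comparison computation for $\alpha_k=\tfrac{1}{k+r}$, $d=1$ to recover \eqref{eq:con2:d1}. The only slip is the Young parameter, which should be $\varepsilon_k=\gamma L_{\hat F}\alpha_k^{d}$ (so that $1/\varepsilon_k$ matches the $\tfrac{1}{\gamma L_{\hat F}}\alpha_k^{-d}$ term in $c_k$ of \cref{ass:stepsizes:Ae0:3} and stays bounded in the $\|F(\bar z^k)\|^2$ coefficient), not $\gamma L_{\hat F}\alpha_k^{-d}$.
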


\begin{appendixproof}{app:thm:BiasCorr}
Having established \eqref{eq:Descent:U:Ae0}, 
let \(B_k = 2A_k \gamma^2 L_{\hat F}^2\) such that
\begin{equation}
\left(2X_{1}^k(1-\betaparam_{k})^{2}\gamma^{2}L_{\hat{F}}^{2}-B_{k}\right)\|z^{k}-z^{k-1}\|^{2} 
= 2\gamma^{2} L_{\hat F}^2\left(X_{1}^k(1-\betaparam_{k})^{2}-A_{k}\right)\|z^{k}-z^{k-1}\|^{2}.
\end{equation}
In what follows we show that it is sufficient to ensure 
\begin{align*}
    X_1^k (1-\alpha_k)^2 \leq A_k,
    \quad 
    X_{2}^k+2A_{k+1}\gamma^{4}L_{\hat{F}}^{2}\alpha_{k}^{2}
        {}\leq{}
    0,
    \numberthis\label{eq:cond:stoch:A0}
\end{align*}
resulting in the inequality 
\begin{align*}
    \E[]{\U_{k+1} \mid \F_k} - \U_k 
        {}\leq{}&
    -\alpha_{k}\mu\|F(z^{k})\|^{2}
        {}+{}
    \left(2A_{k+1} \gamma^2 L_{\hat F}^2+1+2X_{1}^k\right)\alpha_{k}^{2}\gamma^{2}\sigma_F^{2}. 
    \numberthis\label{eq:Quasidescenttype}
\end{align*}
A reasonable choice for the Young parameter \(\varepsilon_k\) is to choose 
\begin{equation}\label{eq:varepsilon}
    \varepsilon_k 
        {}={}
    \gamma L_{\hat{F}}\alpha_{k}^d \quad
    \text{for some } \; d\in[0,1].
\end{equation}
The rational for this choice will become more clear in what follows.

The first inequality in \eqref{eq:cond:stoch:A0} is linear and we can solve it to equality by \cref{lem:recur}.
Let 
\begin{align*}
    A_0
    {}\coloneqq {}
    \sum_{\ell=0}^{\infty}
        \left(%
            c_{l}\alpha_{l}\Pi_{p=0}^{\ell}(1-\alpha_{p})^{2}
        \right) 
    {}={}
        \eta_0 
        \overrel[<]{\eqref{eq:A0}}\infty,
    \quad \textrm{and}
    \quad
    \nu 
        {}={}
    \sum_{k=0}^\infty A_{k+1}\alpha_k^2   
    \overrel[<]{\eqref{eq:A0}}\infty.
    \numberthis\label{eq:nuboundA0}
\end{align*}
Furthermore, let \(c_k\) and \(\eta_{k}\) be as in \cref{ass:stepsizes:Ae0:3}. Then, \cref{lem:recur} yields
\begin{align*}
    A_{k+1} 
        {}={}
    \left(%
        \Pi_{p=0}^{k}\frac{1}{(1-\alpha_{p})^{2}}
    \right)
        \left(
            A_{0}-\sum_{\ell=0}^{k}
            \left(%
                c_{l}\alpha_{l}\Pi_{p=0}^{\ell}(1-\alpha_{p})^{2}
            \right)
        \right)
        {}={}
    \eta_{k+1} \Pi_{p=0}^{k}\frac{1}{(1-\alpha_{p})^{2}} 
    \numberthis\label{eq:Akrec}
\end{align*}
which would ensure \(A_k\geq 0\) for all \(k\). 
Therefore, assumptions \eqref{eq:A0} and \eqref{eq:app:thm:BiasCorr:ineq} (which is a restatement of the conditions in \eqref{eq:cond:stoch:A0}) are sufficient for ensuring  \eqref{eq:Quasidescenttype}. Substituting \(X_1^k\) and  \(A_{k+1}\) in \eqref{eq:Quasidescenttype}  yields 
\begin{align*} 
    \E[]{\U_{k+1} \mid \F_k} - \U_k 
        {}\leq{}&
    -\alpha_{k}\mu\|F(z^{k})\|^{2}
        {}+{}
    \xi_k, 
    \numberthis\label{eq:Quasidescenttype:2}
\end{align*}
where 
\(
\xi_k = 2\left(A_{k+1}(\gamma^{2}L_{\hat{F}}^{2}+1)+\tfrac{1}{2}+(b+1)\gamma^{2}L_{F}^{2}\alpha_{k}+\tfrac{1}{\gamma L_{\hat{F}}}\alpha_{k}^{1-d}\right)\alpha_{k}^{2}\gamma^{2}\sigma_F^{2}. 
\)
By \cref{ass:stepsizes:Ae0} we have that 
\begin{align*}
    \textstyle
    \sum_{k=0}^\infty \xi_k 
        {}={}&
    2\gamma^{2}\sigma_F^{2}\left((\gamma^{2}L_{\hat{F}}^{2}+1)\sum_{k=0}^{\infty}A_{k+1}\alpha_{k}^{2}+\sum_{k=0}^{\infty}\tfrac{\alpha_{k}^{2}}{2}+(b+1)\gamma^{2}L_{F}^{2}\sum_{k=0}^{\infty}\alpha_{k}^{3}+\tfrac{1}{\gamma L_{\hat{F}}}\sum_{k=0}^{\infty}\alpha_{k}^{3-d}\right) 
    \\
        {}\leq{}&
    2\gamma^{2}\sigma_F^{2}\left((\gamma^{2}L_{\hat{F}}^{2}+1)\sum_{k=0}^{\infty}A_{k+1}\alpha_{k}^{2}+\left(\tfrac{1}{2}+(b+1)\gamma^{2}L_{F}^{2}+\tfrac{1}{\gamma L_{\hat{F}}}\right)\sum_{k=0}^{\infty}\alpha_{k}^{2}\right) 
        {}<{}
    \infty
\end{align*}
where we used the fact that $\alpha_k^3 \leq \alpha_k^2$ and \(d \leq 1\) in the first inequality, while the second inequality uses \eqref{eq:nuboundA0}, and \cref{ass:stepsizes:Ae0:1}. 
The claimed convergence result follows by the Robbins-Siegmund supermartingale theorem \cite[Prop. 2]{Bertsekas2011Incremental} and standard arguments as in \cite[Prop. 9]{Bertsekas2011Incremental}.

The claimed rate follows by taking total expectation and summing the above inequality over \(k\) and noting that initial iterates were set as \(\bar z^{-1} = z^{-1}= z^0\). 

To provide an instance of the sequence \(\seq{\alpha_k}\) that satisfy the assumptions, let \(r\) denote a positive natural number and set 
\begin{equation} \label{eq:alphak}
    \alpha_k = \tfrac{1}{k+r}. 
\end{equation}
Then, 
\begin{align*}
    \Pi_{p=0}^{\ell}(1-\alpha_{p})^{2} 
        {}={}&
    \Pi_{p=0}^{\ell}(\tfrac{p+r-1}{p+r})^{2} 
        {}={}
    \tfrac{(r-1)^2}{(\ell+r)^2}
        {}={}
    (r-1)^2 \alpha_\ell^2,  
\end{align*}
and for any \(K \geq 0\) 
\begin{align*}
    \sum_{\ell=0}^{K}\left(c_{\ell}\alpha_{\ell}\Pi_{p=0}^{\ell}(1-\alpha_{p})^{2}\right)
        {}={}
    \sum_{\ell=0}^{K}\tfrac{(r-1)^2}{(\ell+r)^{3}}c_{\ell}.
\end{align*}
Plugging the value of \(c_\ell\) and \(\varepsilon_k\) from \Cref{ass:stepsizes:Ae0:3} and \eqref{eq:varepsilon} we obtain that \(A_0\) is finite valued since \(\sum_{\ell=0}^{\infty}\tfrac{1}{(\ell+r)^3 \varepsilon_{\ell}}=\sum_{\ell=0}^{\infty}\tfrac{1}{(\ell+r)^{3-d}}<\infty\) owing to the fact that \(d\leq 1\). 

Moreover, 
\begin{align*}
     A_{k+1} 
        {}={}
        \frac{(k+r)^{2}}{(r-1)^{2}}
        \left(
            A_{0}-\sum_{\ell=0}^{k}
            \left(%
                \tfrac{(r-1)^2}{(\ell+r)^{3}}c_{\ell}
            \right)
        \right)
            {}={}
        (k+r)^{2}
            \sum_{\ell=k+1}^{\infty}
                \tfrac{1}{(\ell+r)^{3}}c_{\ell}
            {}={}
        \tfrac{1}{\alpha_k^2}
            \sum_{\ell=k+1}^{\infty}
                \alpha_\ell^3c_{\ell}      
        \numberthis\label{eq:Ak1}
\end{align*}

On the other hand, for \(e>1\) we have the following bound  %
\begin{align*}
    \sum_{\ell=k+1}^{\infty}\alpha_{\ell}^{e} 
        {}\leq{}
    \tfrac{1}{(k+1+r)^{e}} + \int_{k+1}^{\infty} \tfrac{1}{(x+r)^e}dx 
        {}={}
    \tfrac{1}{(k+1+r)^{e}}+\tfrac{1}{(e-1)(k+1+r)^{e-1}}.
    \numberthis\label{eq:intHarmonic}
\end{align*}
Therefore, it follows from \eqref{eq:Ak1} that  
\begin{align*}
      A_{k+1}\alpha_k 
        {}={}&
    \tfrac{1}{\alpha_k}\sum_{\ell=k+1}^{\infty} \big(%
        \alpha_\ell^3(1+{b}) \gamma^2 L_F^2 + \tfrac{1}{\gamma L_F}\alpha_\ell^{3-d}
    \big)
    \\
    \dueto{\eqref{eq:intHarmonic}}
        {}\leq{}&
    \left((1+b)\gamma^{2}L_{F}^{2}\tfrac{1}{2(k+1+r)}\right)\left(\tfrac{2}{k+1+r}+1\right)\tfrac{1}{k+1+r} 
    + \left(\tfrac{1}{\gamma L_{\hat{F}}}\tfrac{1}{(2-d)(k+1+r)^{1-d}}\right)\left(\tfrac{1}{k+1+r}+1\right)\tfrac{1}{k+1+r}
    \\
        {}={}&
    \left(\tfrac{1+b}{2}\gamma^{2}L_{F}^{2}\alpha_{k+1}\right)\left(2\alpha_{k+1}+1\right)\alpha_{k+1}
    +\left(\tfrac{1}{\gamma L_{\hat{F}}(2-d)}\alpha_{k+1}^{1-d}\right)\left(\alpha_{k+1}+1\right)\alpha_{k+1}
    \\
        {}\leq{}& 
    \left((1+b)\gamma^{2}L_{F}^{2}\alpha_{k+1}+\tfrac{1}{\gamma L_{\hat{F}}(2-d)}\alpha_{k+1}^{1-d}\right)\left(\alpha_{k+1}+1\right)\alpha_{k+1}
    \numberthis\label{eq:Aalpha}
  \end{align*}  
In turn, this inequality ensures that \(\nu\) as defined in \cref{ass:stepsizes:Ae0:3} is finite. To see this note that 
\begin{align*}
    \textstyle
    \nu 
        {}={}
    \sum_{k=0}^{\infty} A_{k+1}\alpha_k^2 
        {}\overrel[\leq]{\eqref{eq:Aalpha}}{} 
    \sum_{k=0}^{\infty} \left((1+b)\gamma^{2}L_{F}^{2}\alpha_{k+1}+\tfrac{1}{\gamma L_{\hat{F}}(2-d)}\alpha_{k+1}^{1-d}\right)\left(\alpha_{k+1}+1\right)\alpha_{k+1} \alpha_k 
        {}\leq{}
    \delta \sum_{k=0}^{\infty} \alpha_k^2 <\infty,  
\end{align*}
where in the last two inequalities \cref{ass:stepsizes:Ae0:1} was used.

It remains to confirm the second inequality in \eqref{eq:cond:stoch:A0}. With the choice of \(\alpha_k\) and \(\varepsilon_k\) as in \eqref{eq:alphak} and \eqref{eq:varepsilon} we have 
\begin{align*}
    \tfrac{1}{\alpha_k\gamma^2}\big(X_{2}+2A_{k+1}&\gamma^{4}L_{\hat{F}}^{2}\alpha_{k}^{2}\big) 
    \\
        {}={}&
    \gamma L_{\hat{F}}\alpha_{k}^d-1-\tfrac{2\rho}{\gamma}+\alpha_{k}
    +2A_{k+1}\gamma^{2}L_{\hat{F}}^{2}\alpha_{k}
    \\
    \dueto{\eqref{eq:Aalpha}}
        {}\leq{}&
    \gamma L_{\hat{F}}\alpha_{k}^{d}+\alpha_{k}+2\gamma^{2}L_{\hat{F}}^{2}\left((1+b)\gamma^{2}L_{F}^{2}\alpha_{k+1}+\tfrac{1}{\gamma L_{\hat{F}}(2-d)}\alpha_{k+1}^{1-d}\right)\left(\alpha_{k+1}+1\right)\alpha_{k+1}
        {}-{}
    1-\tfrac{2\rho}{\gamma}.
  \end{align*}  
It follows that with \(d=1\) the assumption \eqref{eq:con2:d1} is sufficient to ensure that the second condition in \eqref{eq:cond:stoch:A0} holds. 
\end{appendixproof}

\begin{appendixproof}{thm:BiasCorr}
The result is a restatement of the special case in \Cref{app:thm:BiasCorr} where $\alpha_k = \frac{1}{k+r}$.
We proceed similarly to the proof of \Cref{thm:BiasCorr:2}.

The condition in \eqref{eq:mu} can be rewritten as $b > \tfrac{\gamma^2L_F^2}{1-\gamma^2L_F^2}$.
A reasonable choice is $b = \tfrac{2\gamma^2L_F^2}{1-\gamma^2L_F^2}$.
The choice of $b$ is substituted into \eqref{eq:mu}, \eqref{eq:con2:d1} and $\bar C$ of \Cref{app:thm:BiasCorr}.
This completes the proof.

\end{appendixproof}

\section{Proof for constrained case}\label{app:const}
    
\rbl{
We will rely on two well-known and useful properties of the \emph{deterministic} operator $H = \id - \gamma F$ from \cite[Lm. A.3]{pethick2022escaping} that we restate here for convenience.
\begin{lem}\label{lem:H}
	Let $F:\R^n\to \R^n$ be a $L_F$-Lipschitz operator and $H=\id - \gamma F$ with $\gamma\in(0,\nicefrac1{L_F}]$. Then, 
	\begin{enumerate}
		\item\label{lem:H:coco} 
        The operator $H$ is $\nicefrac12$-cocoercive.
		\item \label{lem:H:SM} The operator $H$ is $(1-\gamma L_F)$-monotone, and 
		in particular 
		\begin{equation}
        \|Hz'-Hz\| \geq (1-\gamma L_F)\|z'-z\|
		\quad \forall z,z'\in\R^n. 
        \end{equation}
	\end{enumerate}
\begin{proof}
    The first claim follows from direct computation
    \begin{equation}
    \label{eq:const:cocoercive}
    \begin{split}
    \langle Hz - Hz', z - z'\rangle &= \langle Hz - Hz', Hz - Hz' + \gamma Fz - \gamma Fz'\rangle \\
    &= \tfrac{1}{2} \|Hz - Hz'\|^2 - \tfrac{\gamma^2}{2} \|Fz' - Fz\|^2 + \tfrac{1}{2} \|z' - z\|^2 \\
    &\geq \tfrac{1}{2} \|Hz - Hz'\|^2,
    \end{split}
    \end{equation}
    where the last inequality is due to Lipschitz continuity and $\gamma \leq \nicefrac{1}{L_F}$.
    The strongly monotonicity of $H$ is a consequence of Cauchy-Schwarz and Lipschitz continuity of $F$,
	\begin{align*}
		\langle Hz' - Hz, z'-z \rangle  
			{}={} &
		\|z'-z\|^2 - \gamma\langle Fz' - Fz, z'-z\rangle  
			{}\geq{} 
		(1-\gamma L)\|z'-z\|^2.
	\end{align*}
	The last claim follows from the Cauchy-Schwarz inequality.
\end{proof}
\end{lem}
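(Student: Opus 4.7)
The plan is to exploit the algebraic identity $z - z' = (Hz - Hz') + \gamma(Fz - Fz')$ in combination with the $L_F$-Lipschitz assumption on $F$ and the stepsize bound $\gamma \leq \nicefrac{1}{L_F}$. Both claims will reduce to short direct computations; the only real choice is which identity to expand.

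For part \ref{lem:H:coco} (cocoercivity), I would start from $\langle Hz - Hz', z - z'\rangle$ and substitute $z - z' = (Hz - Hz') + \gamma(Fz - Fz')$ into the right slot. Using the polarization-type identity $\langle a, a+b\rangle = \tfrac{1}{2}(\|a\|^2 + \|a+b\|^2 - \|b\|^2)$ with $a = Hz - Hz'$ and $b = \gamma(Fz - Fz')$, this expands to
\begin{equation*}
\langle Hz - Hz', z - z'\rangle = \tfrac{1}{2}\|Hz - Hz'\|^2 + \tfrac{1}{2}\|z - z'\|^2 - \tfrac{\gamma^2}{2}\|Fz - Fz'\|^2.
\end{equation*}
The Lipschitz bound $\|Fz - Fz'\| \leq L_F \|z - z'\|$ together with $\gamma L_F \leq 1$ shows that the last two terms are nonnegative, leaving the desired $\tfrac{1}{2}\|Hz - Hz'\|^2$ lower bound. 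This is the cleanest route; an alternative using Young's inequality on $-2\gamma\langle Fz - Fz', z - z'\rangle$ after expanding $\|Hz - Hz'\|^2$ would also work but is less elegant.

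For part \ref{lem:H:SM} (strong monotonicity), a direct expansion suffices: writing $\langle Hz' - Hz, z' - z\rangle = \|z' - z\|^2 - \gamma\langle Fz' - Fz, z' - z\rangle$ and bounding the inner product by Cauchy-Schwarz followed by Lipschitzness gives the $(1 - \gamma L_F)$-monotonicity. The norm lower bound $\|Hz' - Hz\| \geq (1 - \gamma L_F)\|z' - z\|$ is then immediate by applying Cauchy-Schwarz once more to the inequality $\langle Hz' - Hz, z' - z\rangle \geq (1 - \gamma L_F)\|z' - z\|^2$ and dividing by $\|z' - z\|$ (trivial when $z = z'$).

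I do not expect a real obstacle here; the lemma is essentially a bookkeeping exercise, and the only subtlety is recognizing that the cocoercivity constant is exactly $\tfrac{1}{2}$ rather than some $\gamma$-dependent constant, which forces the polarization expansion rather than the more naive bounding that would lose a factor. The two claims are loosely dual in spirit: cocoercivity quantifies how much $H$ contracts along its own output direction, while strong monotonicity quantifies contraction along the input direction, and both are ultimately inherited from the identity part of $H$ with a controlled perturbation by $\gamma F$.
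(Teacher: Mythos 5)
Your proposal is correct and follows essentially the same route as the paper: the cocoercivity claim is the same expansion of $\langle Hz - Hz', z - z' \rangle$ via $z - z' = (Hz - Hz') + \gamma(Fz - Fz')$ yielding $\tfrac12\|Hz-Hz'\|^2 + \tfrac12\|z-z'\|^2 - \tfrac{\gamma^2}{2}\|Fz-Fz'\|^2$, dropped to $\tfrac12\|Hz-Hz'\|^2$ by Lipschitzness and $\gamma L_F \le 1$, and the monotonicity claim is the same Cauchy--Schwarz plus Lipschitz bound followed by one more Cauchy--Schwarz for the norm inequality. No gaps.
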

}

\begin{thm}\label{app:thm:const:convergence}
    Suppose that \cref{ass:AsymPrecon,ass:AsymPrecon:stoch,ass:AsymPrecon,ass:AsymPrecon:stoch:stocLips} hold. 
    Moreover, suppose that \(\alpha_k\in(0,1)\), \(\gamma\in (\lfloor-2\rho\rfloor_+, \nicefrac1{L_F})\) and for positive parameters $\varepsilon$ and $b$ the following holds,
    \begin{equation}\label{app:thm:const:convergence:conditions}
    \mu \coloneqq \tfrac{1}{1+b}(1-\tfrac {1}{\varepsilon (1-\gamma L_F)^2}) - \alpha_0(1+2\gamma^2L_{\hat F}^2 A) + \tfrac{2\rho}{\gamma} > 0
    \quad \text{and} \quad 
    1-\tfrac {1}{\varepsilon (1-\gamma L_F)^2} \geq 0
    \end{equation}
    where $A \geq \varepsilon + \tfrac 1b(1 - \tfrac {1}{\varepsilon (1-\gamma L_F)^2})$. 
    Consider the sequence \(\seq{z^k}\) generated by \Cref{alg:WeakMinty:Sto:StructA}.
    Then, the following estimate holds for all $z^\star \in \mathcal S^\star$ 
    \begin{equation}\label{app:thm:const:rate}
        \sum_{k=0}^K \tfrac{\alpha_k}{\sum_{j=0}^K \alpha_{j}} \mathbb E[\|h^k - H\z^k\|^{2}]
            {}\leq{}
        \frac{\mathbb E[\|z^{0}-z^{\star}\|^{2}] + A \mathbb E[\|h^{-1}-Hz^{-1}\|^{2}] + C \gamma^2\sigma_F^2\sum_{j=0}^K \alpha_{j}^{2}}{\mu \sum_{j=0}^K \alpha_{j}}
    \end{equation}
    where $C = 1 + 2A(1+\gamma^2L_{\hat F}^2) + 2\alpha_0 A$.
\end{thm}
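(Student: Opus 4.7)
The plan is to mirror the potential-function approach used in the unconstrained case (\Cref{app:thm:BiasCorr:2}), adapted for the resolvent step. I would introduce the Lyapunov-type quantity
\begin{equation*}
V_{k+1} \coloneqq \|z^{k+1}-z^\star\|^2 + A_{k+1}\|h^k - Hz^k\|^2 + B_{k+1}\|z^{k+1}-z^k\|^2,
\end{equation*}
where $Hz \coloneqq z - \gamma Fz$ and $A_{k+1}, B_{k+1} > 0$ are scalar parameters to be chosen. The quantity $h^k - Hz^k$ plays exactly the role of $u^k$ in the smooth case, measuring how far the bias-corrected estimate departs from the deterministic reference $Hz^k$.

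First, rearranging \Cref{state:h:sto:Struct} of \Cref{alg:WeakMinty:Sto:StructA} yields
\begin{equation*}
h^k - Hz^k = \gamma\bigl(Fz^k - \hat F(z^k,\xi_k)\bigr) + (1-\alpha_k)\bigl((h^{k-1}-Hz^{k-1}) + \gamma(\hat F(z^{k-1},\xi_k) - Fz^{k-1})\bigr).
\end{equation*}
Conditioning on $\mathcal F_k$, the mixed martingale/recurrence term vanishes by unbiasedness, and \cref{ass:AsymPrecon:stoch:boundedvar,ass:AsymPrecon:stoch:stocLips} yield the same contraction-plus-error bound as equation~\eqref{eq:normuk}, namely $\mathbb E[\|h^k-Hz^k\|^2 \mid \mathcal F_k] \leq (1-\alpha_k)^2 \|h^{k-1}-Hz^{k-1}\|^2 + 2(1-\alpha_k)^2 \gamma^2 L_{\hat F}^2 \|z^k - z^{k-1}\|^2 + 2\alpha_k^2 \gamma^2 \sigma_F^2$. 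This recurrence is what the term $A_{k+1}\|h^k-Hz^k\|^2$ in $V_k$ is meant to absorb, and the choice $B_{k+1} = 2A_{k+1}\gamma^2 L_{\hat F}^2$ as in the unconstrained proof will cancel the consecutive-iterate term.

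Next I would expand $\|z^{k+1}-z^\star\|^2$. Define $g^k \coloneqq h^k - H\bar z^k = h^k - \bar z^k + \gamma F\bar z^k$; by the resolvent identity $(h^k - \bar z^k)/\gamma \in A\bar z^k$, so $g^k/\gamma \in T\bar z^k$. The update reads $z^{k+1} = z^k - \alpha_k \hat g^k$ with $\hat g^k \coloneqq h^k - \bar z^k + \gamma \hat F(\bar z^k,\bar\xi_k)$ an unbiased estimator of $g^k$ given $\bar{\mathcal F}_k$. After taking expectation in $\bar\xi_k$, the inner product splits as $\langle g^k, z^k-z^\star\rangle = \langle g^k, z^k - \bar z^k\rangle + \langle g^k, \bar z^k - z^\star\rangle$. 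Weak MVI applied to $(\bar z^k, g^k/\gamma) \in \graph T$ gives $\langle g^k, \bar z^k - z^\star\rangle \geq (\rho/\gamma)\|g^k\|^2$. For the first summand I decompose $g^k = (h^k-Hz^k) + (Hz^k - H\bar z^k)$ and combine three ingredients: cocoercivity of $H$ (\Cref{lem:H}, item~\ref{lem:H:coco}) to extract $\tfrac12\|Hz^k-H\bar z^k\|^2$; a Young inequality with parameter $\varepsilon$ on the cross contribution $\langle h^k-Hz^k, z^k-\bar z^k\rangle$; and strong monotonicity of $H$ (\Cref{lem:H}, item~\ref{lem:H:SM}) to bound the resulting $\|z^k - \bar z^k\|^2$ by a multiple of $\|Hz^k-H\bar z^k\|^2$. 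A final Young split with parameter $b$, $\|Hz^k-H\bar z^k\|^2 \geq \tfrac{1}{1+b}\|g^k\|^2 - \tfrac{1}{b}\|h^k - Hz^k\|^2$, produces a bound of the form
\begin{equation*}
\langle g^k, z^k-z^\star\rangle \geq \Bigl[\tfrac{\rho}{\gamma} + \tfrac{1}{2(1+b)}\Bigl(1 - \tfrac{1}{\varepsilon(1-\gamma L_F)^2}\Bigr)\Bigr]\|g^k\|^2 - \tfrac{1}{2}\Bigl[\varepsilon + \tfrac{1}{b}\Bigl(1 - \tfrac{1}{\varepsilon(1-\gamma L_F)^2}\Bigr)\Bigr]\|h^k-Hz^k\|^2,
\end{equation*}
which is exactly what the condition $A \geq \varepsilon + \tfrac{1}{b}(1-\tfrac{1}{\varepsilon(1-\gamma L_F)^2})$ in the statement is engineered to absorb.

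Assembling everything into the one-step drift of $V_k$, using $\mathbb E[\|\hat g^k\|^2 \mid \bar{\mathcal F}_k] \leq \|g^k\|^2 + \gamma^2 \sigma_F^2$ for the $\|z^{k+1}-z^k\|^2$ variance term, and choosing the constant sequence $A_k \equiv A$ as above, I would obtain a quasi-descent $\mathbb E[V_{k+1}] - \mathbb E[V_k] \leq -\alpha_k \mu \,\mathbb E[\|g^k\|^2] + C\alpha_k^2 \gamma^2 \sigma_F^2$ with $\mu$ and $C$ as in the statement. Telescoping from $0$ to $K$ and dividing by $\mu\sum_j\alpha_j$ delivers the advertised rate on $\mathbb E[\|g^{k_\star}\|^2] = \mathbb E[\|h^{k_\star} - H\bar z^{k_\star}\|^2]$. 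The hard step is the lower bound on $\langle g^k, z^k-\bar z^k\rangle$: one must simultaneously exploit the cocoercivity factor $\tfrac12$ of $H$, use strong monotonicity of $H$ to reconvert $\|z^k-\bar z^k\|$ back into $\|Hz^k-H\bar z^k\|$, and tune $\varepsilon, b$ so that the coefficient in front of $\|g^k\|^2$ collapses to $1+2\rho/\gamma$ in the deterministic limit $\alpha_0 \to 0$. This tuning is precisely what forces the two-part condition~\eqref{app:thm:const:convergence:conditions}.
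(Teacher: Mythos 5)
Your proposal is correct and follows essentially the same route as the paper's proof: the same potential $\|z^{k+1}-z^\star\|^2 + A_{k+1}\|h^k-Hz^k\|^2 + B_{k+1}\|z^{k+1}-z^k\|^2$, the same contraction bound on $h^k-Hz^k$ via unbiasedness and mean-square Lipschitzness, the same use of $\tfrac{1}{\gamma}(h^k-H\bar z^k)\in T\bar z^k$ with weak MVI, cocoercivity and strong monotonicity of $H$, the two Young parameters $\varepsilon$ and $b$, and the choices $B_k = 2\gamma^2 L_{\hat F}^2 A_k$, $A_k\equiv A$ before telescoping. No substantive differences to flag.
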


\begin{appendixproof}{app:thm:const:convergence}
We rely on the following potential function,
\begin{align*}
    \U_{k+1} 
        {}\coloneqq{}
    \|z^{k+1}-z^{\star}\|^{2}
    +A_{k+1}\|h^{k}-Hz^{k}\|^{2}
    +B_{k+1}\|z^{k+1}-z^{k}\|^{2},
\end{align*}
where \(\seq{A_{k}}\) and \(\seq{B_{k}}\) are positive scalar parameters to be identified.

We will denote  $\hat {\bar{H}}_k := \z^k - \gamma\hat F(\z^k, \bar \xi_k)$, so that $z^{k+1} = z^k - \alpha_k (h^k - \hat{\bar H}_k)$. 
Then, expanding one step,
\begin{align}
\|z^{k+1} - z^\star\|^2= \|z^k - z^\star\|^2- 2\alpha_k \langle h^k - \hat{\bar H}_k, z^k - z^\star\rangle + \alpha_k^2 \|h^k - \hat{\bar H}_k\|^2.
\label{eq:const:from_kp1_to_k_2}
\end{align}
Recall that $Hz \coloneqq z - \gamma Fz$ in the deterministic case.
In the \Cref{alg:WeakMinty:Sto:StructA}, $h^k$ estimates $Hz^k$. 
	Let us quantify how good this estimation is.
	\begin{align*}
	h^{k} - Hz^k ={}& \gamma Fz^k - \gamma\hat F(z^k,\xi_k) + (1-\alpha_{k-1}) (h^{k-1} - z^{k-1} + \gamma\hat F(z^{k-1},\xi_k))\\
	\|h^{k} - Hz^k\|^2 ={}& (1-\alpha_{k-1})^2 \|h^{k-1} - z^{k-1} + \gamma Fz^{k-1}\|^2 \\
	&+ \|\gamma Fz^k - \gamma\hat F(z^k,\xi_k)  + (1-\alpha_{k-1})(\gamma\hat F(z^{k-1},\xi_k) - \gamma Fz^{k-1})\|^2 \\
	& + 2 (1-\alpha_{k-1}) \langle h^{k-1} - z^{k-1} + \gamma Fz^{k-1}, \\
  & \hspace{6em} \gamma Fz^k - \gamma\hat F(z^k,\xi_k)  + (1-\alpha_{k-1})(\gamma\hat F(z^{k-1},\xi_k) - \gamma Fz^{k-1})\rangle
	\end{align*}
	In the scalar product, the left term is known when $z^k$ is known and the right term has an expectation equal to 0 by \Cref{ass:AsymPrecon:stoch:unbiased} when $z^k$ is known.
	Thus, taking conditional expectation and using the fact that the second moment is larger than the variance, we can go on as
	\begin{align}
	\mathbb E[\|h^{k} - Hz^k \|^2 \;|\; \mathcal F_k] &\leq (1-\alpha_{k})^2 \|h^{k-1} - Hz^{k-1}\|^2 \notag \\
	&\qquad + \mathbb E[ 2(1-\alpha_{k})^2\gamma^2 \| \hat F(z^k,\xi_k) - \hat F(z^{k-1},\xi_k) \|^2 \; |\; \mathcal F_k] \notag \\
  &\qquad + \mathbb E[2\alpha_{k}^2\gamma^2 \| Fz^k - \hat F(z^k,\xi_k) \|^2  \; |\; \mathcal F_k] \notag\\
	&\leq (1-\alpha_{k})^2 \|h^{k-1} - Hz^{k-1}\|^2 + 2(1-\alpha_{k})^2 L_{\hat F}^2\gamma^2 \|z^k - z^{k-1}\|^2 + 2\alpha_{k}^2 \gamma^2\sigma_F^2
	\label{eq:const:approx_Hzk}
	\end{align}
  where we have used \Cref{ass:AsymPrecon:stoch:boundedvar} and \Cref{ass:AsymPrecon:stoch:stocLips}.
	
	We continue with the conditional expectation of the inner term in \eqref{eq:const:from_kp1_to_k_2}. 
	\begin{equation}\label{eq:const:innerprods}
  \begin{split}
	-\mathbb E[\langle h^k - \hat{\bar H}_k,  z^k - z^\star\rangle \;|\; \mathcal F_k] &= -\langle h^k - H\bar z^k, z^k - z^\star\rangle \\
	& = -\langle h^k - H\bar z^k, z^k - \bar z^k\rangle -\langle h^k - H\bar z^k, \bar z^k - z^\star\rangle \\
	& = -\langle h^k - Hz^k, z^k - \bar z^k\rangle -  \langle Hz^k - H\bar z^k, z^k - \bar z^k\rangle -\langle h^k - H\bar z^k, \bar z^k - z^\star\rangle \\
	& \leq -\langle h^k - Hz^k, z^k - \bar z^k\rangle -  \tfrac{1}{2}\|Hz^k - H\bar z^k\|^2 -\langle h^k - H\bar z^k, \bar z^k - z^\star\rangle 
  \end{split}
	\end{equation}
where the last inequality uses $\nicefrac{1}{2}$-cocoercivity of $H$ from \Cref{lm:H:cocoercive} under \Cref{ass:AsymPrecon:M:Lip} and the choice $\gamma \leq 1/L_F$.

By definition of $\bar z^k$ in \Cref{state:barzL:sto:Struct}, we have $h^k \in \z^k + \gamma A(\bar z^k)$, so that $\tfrac{1}{\gamma}(h^k - H\bar z^k) \in F(\bar z^k) + A(\bar z^k)$. Hence, using the weak MVI from \Cref{ass:AsymPrecon:Minty:Struct},
\begin{equation}\label{eq:const:minty}
\langle h^k - H\bar z^k, \bar z^k - z^\star\rangle 
\geq \tfrac{\rho}{\gamma} \|h^k - H\bar z^k\|^2 \; .
\end{equation}

Using \eqref{eq:const:minty} in \eqref{eq:const:innerprods} leads to the following inequality, true for any $\varepsilon_k>0$:
	\begin{align*}
	-\mathbb E[\langle h^k - \hat{\bar H}_k,  z^k - z^\star\rangle \;|\; \mathcal F_k]  \leq \tfrac{\varepsilon_k}{2} \|h^k - Hz^k\|^2 + \tfrac 1{2\varepsilon_k} \|\bar z^k - z^k\|^2 - \tfrac 12 \|Hz^k-H\bar z^k\|^2 - \tfrac{\rho}{\gamma} \|h^k - H\bar z^k\|^2\;.
	\end{align*}
	\rbl{
    To majorize the term $\|\bar z^k - z^k\|^2$, we use \Cref{lm:H:strmonotone} to get
\begin{align*}
\|H\bar z^k - Hz^k\|^2 \geq (1- \gamma L_F)^2\|\bar z^k - z^k\|^2 \;.
\end{align*}
Hence, as long as $\gamma L_F < 1$, then
}
	\begin{align}
-\mathbb E[\langle h^k - \hat{\bar H}_k,  z^k - z^\star\rangle \;|\; \mathcal F_k]  \leq \tfrac{\varepsilon_k}{2}  \|h^k - Hz^k\|^2 + \Big(\tfrac {1}{2\varepsilon_k (1-\gamma L_F)^2}- \tfrac 12\Big) \|Hz^k-H\z^k\|^2 - \tfrac{\rho}{\gamma} \|h^k - H\bar z^k\|^2\;.
\label{eq:const:second_term}
\end{align}
The third term in \eqref{eq:const:from_kp1_to_k_2} is bounded by
\begin{align}
\alpha_k^2 \mathbb E[  \|h^k - \hat{\bar H}_k\|^2 \; |\; \mathcal F_k] = \alpha_k^2\|h^k - H\bar z^k\|^2 + \alpha_k^2\gamma^2 \mathbb E[  \|F\bar z^k - \hat F(\bar z^k,\bar \xi_k)\|^2 \; |\; \mathcal F_k] \leq \alpha_k^2\|h^k - H\bar z^k\|^2 + \alpha_k^2\gamma^2 \sigma_F^2
\label{eq:const:third_term}
\end{align}

Combined with the update rule, \eqref{eq:const:third_term} can also be used to bound the difference of iterates
\begin{equation}\label{eq:const:iterate_diff}
\mathbb E[\|z^{k+1} - z^k\|^2 \; |\; \mathcal F_k]
= \mathbb E[\alpha_k^2\|h^k - \hat{\bar H}_k\|^2 \; |\; \mathcal F_k]
\leq \alpha_k^2\|h^k - H\z^k\|^2 + \alpha_k^2\gamma^2\sigma_F^2
\end{equation}
Using \eqref{eq:const:from_kp1_to_k_2}, \eqref{eq:const:second_term}, \eqref{eq:const:third_term} and \eqref{eq:const:iterate_diff} we have,
\begin{equation}
\begin{split}
\mathbb E[\U_{k+1} \; |\; \mathcal F_k] 
& \leq 
    \|z^k - z^\star\|^2 + (A_{k+1} + \alpha_k \varepsilon_k)  \|h^k - Hz^k\|^2 - \alpha_k \Big(1-\tfrac {1}{\varepsilon_k (1-\gamma L_F)^2}\Big) \|Hz^k-H\bar z^k\|^2 \\
    & \qquad + \alpha_k(\alpha_k - \tfrac{2\rho}{\gamma} + \alpha_k B_{k+1}) \|h^k - H\bar z^k\|^2 + \alpha_k^2(1+B_{k+1})\gamma^2\sigma_F^2 \\
& \leq 
    \|z^k - z^\star\|^2 
    + \big(A_{k+1} + \alpha_k (\varepsilon_k + \tfrac 1b(1-\tfrac {1}{\varepsilon_k (1-\gamma L_F)^2}))\big)  \|h^k - Hz^k\|^2 \\
    &\qquad + \alpha_k \Big(\alpha_k - \tfrac{2\rho}{\gamma} + \alpha_k B_{k+1} - \tfrac{1}{1+b}(1-\tfrac {1}{\varepsilon_k (1-\gamma L_F)^2})\Big) \|h^k - H\bar z^k\|^2 \\
    & \qquad + \alpha_k^2(1+B_{k+1})\gamma^2\sigma_F^2,
\end{split}
\end{equation}
where the last inequality follows from Young's inequality with positive $b$ and requiring $1-\tfrac {1}{\varepsilon_k (1-\gamma L_F)^2} \geq 0$ as also stated in \eqref{app:thm:const:convergence:conditions}.
By defining
\begin{equation}
\begin{split}
X_k^1 & \coloneqq A_{k+1} + \alpha_k (\varepsilon_k + \tfrac 1b(1 - \tfrac {1}{\varepsilon_k (1-\gamma L_F)^2})) \\
X_k^2 & \coloneqq \alpha_k \Big(\alpha_k - \tfrac{2\rho}{\gamma} + \alpha_k B_{k+1} - \tfrac{1}{1+b}(1-\tfrac {1}{\varepsilon_k (1-\gamma L_F)^2})\Big)
\end{split}
\end{equation}
and applying \eqref{eq:const:approx_Hzk}, we finally obtain 
\begin{equation}
\begin{split}
\label{eq:const:descent}
\mathbb E[\U_{k+1} \; |\; \mathcal F_k]  - \U_{k}
& \leq 
    X_k^2 \|h^k - H\bar z^k\|^2 \\
    & \qquad + (X_k^1(1-\alpha_{k})^2 - A_k) \|h^{k-1} - Hz^{k-1}\|^2  \\
    & \qquad + (2X_k^1(1-\alpha_{k})^2 \gamma^2L_{\hat F}^2 - B_k) \|z^k - z^{k-1}\|^2  \\
    & \qquad + 2X_k^1\alpha_{k}^2 \gamma^2\sigma_F^2 + \alpha_k^2(1+B_{k+1})\gamma^2\sigma_F^2,
\end{split}
\end{equation}
We can pick $B_k = 2\gamma^2L_{\hat F}^2 A_k$ in which case, to get a recursion, we only require the following.
\rbl{
\begin{equation}
\label{eq:const:recursion}
X_k^1(1- \alpha_{k})^2 - A_k \leq 0
\quad \text{and} \quad
X_k^2 < 0
\end{equation}
Set $A_k=A$, $\varepsilon_k = \varepsilon$.
For the first requirement of \eqref{eq:const:recursion}, 
\begin{equation}
\begin{split}
X_k^1(1- \alpha_{k})^2 - A_k
&= 
    \alpha_k (1-\alpha_{k})^2 (\varepsilon + \tfrac 1b(1 - \tfrac {1}{\varepsilon (1-\gamma L_F)^2}))
    + (1-\alpha_{k})^2A - A \\
&\leq 
    \alpha_k (\varepsilon + \tfrac 1b(1 - \tfrac {1}{\varepsilon (1-\gamma L_F)^2}))
    + (1-\alpha_{k})^2A - A \\
&\leq 
    \alpha_k (\varepsilon + \tfrac 1b(1 - \tfrac {1}{\varepsilon (1-\gamma L_F)^2}))
    + (1-\alpha_{k})A - A \\
&= 
    \alpha_k (\varepsilon + \tfrac 1b(1 - \tfrac {1}{\varepsilon (1-\gamma L_F)^2}))
    - \alpha_k A
\end{split}
\end{equation}
where the first inequality follows from $\left(1-\alpha_{k}\right)^2 \leq 1$ and the second inequality follows from $\left(1-\alpha_{k}\right)^2 \leq\left(1-\alpha_{k}\right)$.
Thus, to satisfy the first inequality of \eqref{eq:const:recursion} it suffice to pick
\begin{equation}\label{eq:const:A}
A \geq \varepsilon + \tfrac 1b(1 - \tfrac {1}{\varepsilon (1-\gamma L_F)^2}).
\end{equation}
}

The noise term in \eqref{eq:const:descent} can be made independent of $k$ by using $\alpha_k \leq \alpha_0$ and \eqref{eq:const:A} as follows
\begin{equation}
\begin{split}
2X_k^1 + 1 + B_{k+1}
    &= 1 + 2A(1+\gamma^2L_{\hat F}^2)
        + 2\alpha_k (\varepsilon + \tfrac 1b(1 - \tfrac {1}{\varepsilon (1-\gamma L_F)^2}))\\
    &\leq 1 + 2A(1+\gamma^2L_{\hat F}^2)
        + 2\alpha_0 A
    = C.
\end{split}
\end{equation}

Thus it follows from \eqref{eq:const:descent} and $\alpha_k \leq \alpha_0$ that
\begin{equation}
\begin{split}
\mathbb E[\U_{k+1} \; |\; \mathcal F_k]  &- \U_{k} \\
   \leq{}& \alpha_k \Big(\alpha_0 - \tfrac{2\rho}{\gamma} + 2\alpha_0 \gamma^2L_{\hat F}^2 A - \tfrac{1}{1+b}(1-\tfrac {1}{\varepsilon_k (1-\gamma L_F)^2})\Big)\|h^k-H\bar z^k\|^2
    + \alpha_k^2C\gamma^2\sigma_F^2.
\end{split}
\end{equation}
The result is obtained by total expectation and summing the above inequality while noting that the initial iterate were set as \(z^{-1}= z^0\). 
\end{appendixproof}

\begin{appendixproof}{thm:const:convergence}
The theorem is a specialization of \Cref{app:thm:const:convergence} with a particular a choice of $b$ and $\varepsilon$.
The second requirement in \eqref{app:thm:const:convergence:conditions} can be rewritten as,
\begin{equation}
\varepsilon \geq \tfrac{1}{(1-\gamma L_F)^2},
\end{equation}
which is satisfied by $\varepsilon = \tfrac{1}{\sqrt{\alpha_0}(1-\gamma L_F)^2}$.
We substitute in the choice of $\varepsilon$, $b=\sqrt{\alpha_0}$ and denotes $\eta \coloneqq A$.

The weighted sum in \eqref{app:thm:const:rate} can be converted into an expectation over a sampled iterate in the style of \citet{ghadimi2013stochastic},
\begin{align*}
    \mathbb E[\|h^{k_\star} - H\z^{k_\star}\|^{2}]
        {}={}
    \sum_{k=0}^K \tfrac{\alpha_k}{\sum_{j=0}^K \alpha_{j}} \mathbb E[\|h^k - H\z^k\|^{2}]
\end{align*}
with $k_{\star}$ chosen from $\{0,1, \ldots, K\}$ according to probability $\mathcal{P}\left[k_{\star}=k\right]=\frac{\alpha_k}{\sum_{j=0}^K \alpha_j}$.

\rbl{
Noticing that $h^{k_\star} - H\z^{k_\star} \in \gamma(F\z^{k_\star} + A\z^{k_\star}) = \gamma T\z^{k_\star}$ so
\begin{align*}
    \mathbb E[\|h^{k_\star} - H\z^{k_\star}\|^{2}] 
        \geq \min_{u \in T\z^{k_\star}} \mathbb E[\|\gamma u\|^2] 
        \geq \mathbb E[\min_{u \in T\z^{k_\star}} \|\gamma u\|^2]
        =: \mathbb E[\dist(0,\gamma T\z^{k_\star})^2]
\end{align*}
where the second inequality follows from concavity of the minimum.
This completes the proof.
}
\end{appendixproof}

\section{Proof for NP-PDEG through a nonlinear asymmetric preconditioner}\label{app:AFBA}\label{app:NP-PDHG}
    
{%
\subsection{Preliminaries}\label{app:AFBA:prelim}

Consider the decomposition \(z=(z_1,\ldots, z_m)\), \(u=(u_1,\ldots, u_m)\) with \(z_i,u_i\in\R^{n_i}\) and define the shorthand notation \(u_{\leq i} \coloneqq (u_1,u_2,\ldots, u_i)\) and \(u_{\geq i} \coloneqq (u_{i},\ldots, u_m)\) for the truncated vectors. Moreover suppose that \(A\) conforms to the decomposition \(Az =(A_1,z_1,\ldots, A_mz_m)\) with \(A_i:\R^{n_i}\rightrightarrows\R^{n_i}\) maximally monotone. Consistently with the decomposition define \(\Gamma = \blockdiag(\Gamma_1,\ldots,\Gamma_m)\) where \(\Gamma_i\in\R^{n_i\times n_i}\) are positive definite matrices and let 
\begin{equation}\label{eq:AsymPrecon:Q}
\PC[u][z] 
    {}\coloneqq{}
\Gamma^{-1} z + \QC[u][z], \quad 
\text{\rm where}
\;
\QC[u][z]
    {}={}
\big(
    0, q_1(z_1, u_{\geq 2}), q_2(z_1, z_2, u_{\geq 3}),\ldots, q_{m-1}(z_{\leq m-1}, u_m) 
\big)
\end{equation}   
When \(P_{u}\) furnishes such an asymmetric structure the preconditioned resolvent has full domain, thus ensuring that the algorithm is well-defined. 

In the following lemma we show that the iterates in \eqref{eq:AFBA} are well-defined for a particular choice of the preconditioner $\PC[u]$ in \eqref{eq:AsymPrecon:Q}. The proof is similar to that of  \cite[Lem. 3.1]{Latafat2017Asymmetric} and is included for completeness. 

\begin{lem}\label{lem:AFBAfulldomain}
    Let \(z=(z_1,\ldots, z_m)\), \(u=(u_1,\ldots, u_m)\) be given vectors, suppose that \(A\) conforms to the decomposition \(Az =(A_1,z_1,\ldots, A_mz_m)\) with \(A_i:\R^{n_i}\rightrightarrows\R^{n_i} \) maximally monotone, and let \(P_{u}\) be defined as in \eqref{eq:AsymPrecon:Q}. Then, the preconditioned resolvent \((P_u + A)^{-1}\) is Lipschitz continuous and has full domain. Moreover, the update \(\bar z = (P_u + A)^{-1} z\) reduces to the following update
    \begin{align*}
        \bar z_i 
            {}={}
        \begin{cases}
             (\Gamma_1^{-1} + A_1)^{-1}z_1 & \quad \text{if } \; i=1\\
             (\Gamma_i^{-1} + A_i)^{-1}(z_i - q_{i-1}(\bar z_{\leq i-1}, u_{\geq {i}}) & \quad \text{if } \; i = 2,\ldots, m
         \end{cases} 
         \numberthis\label{eq:barz:update:AFBA}
    \end{align*}
\begin{proof}
Owing to the asymmetric structure \eqref{eq:AsymPrecon:Q}, the resolvent may equivalently be expressed as 
\begin{align*}
    \bar z 
        {}={} 
    (\bar z_1,\ldots \bar z_m) = (P_u + A)^{-1}z 
        {}\iff{} 
    \Gamma_{i}^{-1}\bar z_i + A_i(\bar z_i) \in z_i - q_{i-1}(\bar z_{\leq i-1}, u_{\geq i}), \quad i=1,\ldots, m,
\end{align*}
where \(q_0 \equiv 0\). The Gauss-Seidel-type update in \eqref{eq:barz:update:AFBA} is of  immediate verification after noting that \((\Gamma_i^{-1}+A_i)^{-1}\)  is single-valued (in fact Lipschitz continuous) 
 since the sum of \(\Gamma_i \succ 0\) and \(A_i\) is (maximally) strongly monotone. This also implies that \(\Gamma_i^{-1} + A_i = \bar A_i + \beta \I\) for some \(\beta>0\) and some maximally monotone operator \(\bar A_i\). Thus 
 \(
    \dom\big(\Gamma_i^{-1}+ A_i)^{-1}\big)
        {}={}
    \range(\Gamma_i^{-1}+ A_i)
        {}={}
    \range(\tfrac{1}{\beta}\bar A + \I) = \R^n 
\), where we used Minty's theorem in the last equality. 
\end{proof}
\end{lem}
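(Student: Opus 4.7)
The plan is to exploit the block-triangular structure of the asymmetric part $Q_u$ in \eqref{eq:AsymPrecon:Q} and reduce the apparently monolithic resolvent equation to a sequence of single-block resolvent equations. Starting from the defining inclusion $\bar z = (P_u + A)^{-1} z$, which by \eqref{eq:AsymPrecon:Q} rewrites as $\Gamma^{-1}\bar z + Q_u(\bar z) + A\bar z \ni z$, I would decompose this componentwise. Because $Q_u$ has a lower-triangular dependence on $\bar z$ — the $i$-th block $q_{i-1}$ sees only $\bar z_{\leq i-1}$ on its "already computed" slots and $u_{\geq i}$ on the remaining slots — the $i$-th equation collapses to $\Gamma_i^{-1}\bar z_i + A_i(\bar z_i) \ni z_i - q_{i-1}(\bar z_{\leq i-1}, u_{\geq i})$, which is exactly the Gauss-Seidel update \eqref{eq:barz:update:AFBA}.

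The remaining task is to verify that each block step in \eqref{eq:barz:update:AFBA} is well-posed and globally single-valued. For this I would invoke Minty's theorem: since $\Gamma_i \succ 0$, its inverse $\Gamma_i^{-1}$ is $\beta_i$-strongly monotone with $\beta_i$ the smallest eigenvalue, and since $A_i$ is maximally monotone, the sum $\Gamma_i^{-1} + A_i$ is maximally $\beta_i$-strongly monotone. I would write $\Gamma_i^{-1} + A_i = \beta_i \I + \bar A_i$ with $\bar A_i$ maximally monotone, so that $\mathrm{range}(\Gamma_i^{-1}+A_i) = \mathrm{range}(\beta_i^{-1}\bar A_i + \I) = \R^{n_i}$ by Minty. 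Hence $(\Gamma_i^{-1}+A_i)^{-1}$ is defined on all of $\R^{n_i}$, single-valued, and $\beta_i^{-1}$-Lipschitz as the inverse of a strongly monotone operator.

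Full domain and Lipschitz continuity of the whole resolvent $(P_u + A)^{-1}$ then follow by an induction on $i$: each $\bar z_i$ is a Lipschitz function of $z_i$ and of the previously computed $\bar z_{\leq i-1}$ (via $q_{i-1}$, which we may assume continuous — or, more carefully, we inherit whatever regularity of $q_{i-1}$ is implicit in the usage in \Cref{alg:NP-PDHG}). Composing these block-wise Lipschitz maps produces a Lipschitz map $z \mapsto \bar z$ defined on all of $\R^{n_1 + \cdots + n_m}$.

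The main obstacle I anticipate is not any single one of these steps but rather making sure the triangular decomposition is stated precisely enough that the induction goes through cleanly — in particular, that the split between "already computed" $z$-coordinates and the frozen $u$-coordinates in $q_{i-1}$ is exactly as in \eqref{eq:AsymPrecon:Q}. Once this bookkeeping is pinned down, the rest is a direct application of Minty's theorem block by block.
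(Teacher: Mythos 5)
Your proposal is correct and follows essentially the same route as the paper: componentwise decomposition exploiting the triangular structure of $Q_u$, followed by a block-by-block application of Minty's theorem to the strongly monotone sum $\Gamma_i^{-1}+A_i$ to get single-valuedness, full domain, and Lipschitz continuity. Your explicit remark that the Lipschitz continuity of the composed map rests on the regularity of the $q_{i-1}$ is a reasonable clarification of what the paper leaves implicit.
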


\subsection{Deterministic lemmas}

To eventually prove \Cref{app:thm:AsymPrecon:convergence} we will compare the stochastic algorithm \eqref{eq:AFBA:Stoc} with its deterministic counterpart \eqref{eq:AFBA}, so we introduce
\begin{subequations}
\begin{align}
\HC[u][z]
    {}\coloneqq{}&
\PC[u][z] - F(z) \label{eq:AFBA:det:H}
\\
\bar G (z) 
    {}\coloneqq{}&
(\PC[z] + A)^{-1}(\HC[z][z]) \label{eq:AFBA:det:barT}
\\
G(z) 
    {}\coloneqq{}&
z - \alpha_k \Gamma \left(\HC[z][z] - \HC[z][\bar G(z)]\right) \label{eq:AFBA:det:T}.
\end{align}
\end{subequations}
We first derive results for the deterministic operator $G$ and then shows that $z^{k+1}$ from the stochastic scheme behaves similarly to  ${G}(z^k)$ when $\alpha_k$ is small enough, even if $\Gamma$, which also appears inside the preconditioner $\SPC[u][\cdot][\xi]$, remains large.

Instead of making assumptions on $F$ directly, we instead consider the following important operator,
\begin{equation}\label{eq:def:M}
\MMC[u][z]:=F(z)-\QC[u][z].
\end{equation}
such that we can write \eqref{eq:AFBA:det:barT} as $\HC[u][z]=\Gamma^{-1}z - \MMC[u][z]$. As a shorthand we write $\MC[z][z]=\MMC[z][z]$.
\begin{ass}\label{ass:M:Lips}
The operator $\MMC[u]$ as defined in \eqref{eq:def:M} is $L_M$-Lipschitz with $L_M \leq 1$ with respect to a positive definite matrix $\Gamma \in \R^{n\times n}$, i.e.
\begin{equation}
\|\MMC[u][z] - \MMC[u][z']\|_{\Gamma} \leq L_M\|z-z^\prime\|_{\Gamma^{-1}} \quad \forall z,z' \in \R^n.
\end{equation}
\end{ass}
\begin{remark}
This is satisfied by the choice of $\QC[u]$ in \eqref{eq:Q:def} and \Cref{ass:PD:Lip:phi,ass:PD:stepsize}.
\end{remark}

With $\MMC[u]$ defined, it is straightforward to establish that \(\HC[u]\) is \(\nicefrac12\)-cocoercive and strongly monotone. %
\begin{lem}\label{lm:H:properties}
Suppose \Cref{ass:M:Lips} holds. Then, 
\begin{enumerate}
\item\label{lm:H:cocoercive} 
The mapping \(\HC[u]\) is \(\nicefrac12\)-cocoercive for all $u \in \R^n$, i.e.
\begin{equation}
\langle \HC[u][z']-\HC[u][z],z'-z\rangle \geq \tfrac12\|\HC[u][z']-\HC[u][z]\|^2_{\Gamma} \quad \forall z,z'\in\R^n.
\end{equation}
\item\label{lm:H:strmonotone} 
Furthermore, $\HC[u]$ is $(1-L_M)$-monotone for all $u \in \R^n$, and in particular
\begin{equation}
\|H_{u}(z') - H_{u}(z)\|_\Gamma \geq (1- L_M)\|z' - z\|_{\Gamma^{-1}} \quad \forall z,z'\in\R^n.
\end{equation}
\end{enumerate}
\end{lem}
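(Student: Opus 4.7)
My plan is to mimic the proof of \Cref{lem:H} that has already appeared earlier in the excerpt, but in the weighted geometry induced by $\Gamma$ and with $\gamma F$ replaced by $M_u$. The key identity to exploit is $H_u(z) = \Gamma^{-1} z - M_u(z)$, so that for any $z, z' \in \R^n$ one has $H_u(z) - H_u(z') = \Gamma^{-1}(z-z') - (M_u(z) - M_u(z'))$. Using this, I would expand $\|H_u(z) - H_u(z')\|_\Gamma^2$ and the cross term $\langle H_u(z) - H_u(z'), z-z'\rangle$, observing the handy cancellations $\langle \Gamma^{-1}(z-z'), z-z'\rangle = \|z-z'\|_{\Gamma^{-1}}^2$ and $\langle \Gamma^{-1}(z-z'), M_u(z) - M_u(z')\rangle_\Gamma = \langle z-z', M_u(z) - M_u(z')\rangle$.

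For Item~\ref{lm:H:cocoercive}, I would derive the three-term parallelogram-style identity
\begin{equation*}
2\langle H_u(z) - H_u(z'), z-z'\rangle = \|H_u(z) - H_u(z')\|_\Gamma^2 + \|z-z'\|_{\Gamma^{-1}}^2 - \|M_u(z) - M_u(z')\|_\Gamma^2,
\end{equation*}
which is the exact analogue of \eqref{eq:const:cocoercive} once one reads $\Gamma^{-1}$ in place of $\id$ and $M_u$ in place of $\gamma F$. Applying \Cref{ass:M:Lips} bounds the last term by $L_M^2\|z-z'\|_{\Gamma^{-1}}^2 \leq \|z-z'\|_{\Gamma^{-1}}^2$, and $\nicefrac{1}{2}$-cocoercivity in the $\Gamma$-norm drops out.

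For Item~\ref{lm:H:strmonotone}, I would start from the direct expansion
\begin{equation*}
\langle H_u(z) - H_u(z'), z-z'\rangle = \|z-z'\|_{\Gamma^{-1}}^2 - \langle M_u(z) - M_u(z'), z-z'\rangle,
\end{equation*}
and apply the ``mixed'' Cauchy--Schwarz inequality $|\langle a, b\rangle| \leq \|a\|_\Gamma \|b\|_{\Gamma^{-1}}$ (verified by inserting $\Gamma^{1/2}\Gamma^{-1/2}$) together with \Cref{ass:M:Lips} to obtain $(1-L_M)$-monotonicity in the $\Gamma^{-1}$-norm. The pointwise norm lower bound then follows by a second application of the mixed Cauchy--Schwarz on the left-hand side and dividing through by $\|z-z'\|_{\Gamma^{-1}}$.

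No step is really delicate here; the only thing to watch is the bookkeeping of which norm ($\Gamma$ vs.\ $\Gamma^{-1}$) each quantity lives in, since both norms appear simultaneously. In particular, the ``mixed'' Cauchy--Schwarz inequality is what makes everything fit together, and I would state it explicitly at the outset so that the calculations read cleanly.
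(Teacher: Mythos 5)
Your proposal is correct and follows essentially the same route as the paper: the same three-term expansion of $\langle \HC[u][z']-\HC[u][z], z'-z\rangle$ in the $\Gamma$/$\Gamma^{-1}$ geometry combined with \Cref{ass:M:Lips} for cocoercivity, and the same Cauchy--Schwarz (in mixed norms) plus division by $\|z'-z\|_{\Gamma^{-1}}$ for the $(1-L_M)$-monotonicity and the norm lower bound. The only difference is presentational: you make the mixed Cauchy--Schwarz inequality explicit, which the paper leaves implicit.
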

\begin{proof}
By expanding using \eqref{eq:def:M},
\begin{align*}
    \HC[u][z] - \HC[u][z'] 
        {}={}
    \Gamma^{-1}(z -z') - (\MMC[u][z] - \MMC[u][z']).
    \numberthis\label{eq:Hdif:1}
\end{align*}
Using this we can show cocoercivity, 
\begin{align*}
    \langle \HC[u][z']-\HC[u][z],z'-z\rangle
        {}={}&
    \langle \HC[u][z']-\HC[u][z],\HC[u][z']-\HC[u][z] - (\MMC[u][z] - \MMC[u][z'])\rangle_{\Gamma}
    \\
    \dueto{\eqref{eq:Hdif:1}}    
        {}={}&
    \tfrac12\|\HC[u][z']-\HC[u][z]\|^2_{\Gamma}
        {}+{}
    \tfrac12 \|z'-z\|^2_{\Gamma^{-1}} 
        {}-{}
    \tfrac12 \|\MMC[u][z] - \MMC[u][z']\|^2_{\Gamma}
    \\
    \dueto{\Cref{ass:M:Lips}}
        {}\geq{}&
    \tfrac12\|\HC[u][z']-\HC[u][z]\|^2_{\Gamma}
    \numberthis\label{eq:AFBA:cocoercive}
\end{align*}
That $\HC[u]$ is strongly monotone follows from Cauchy-Schwarz and \Cref{ass:M:Lips},
\begin{equation}
\langle \HC[u][z'] - \HC[u][z], z'-z \rangle  
			{}={} 
		\|z'-z\|^2_{\Gamma^{-1}} - \langle \MMC[u][z'] - \MMC[u][z], z'-z\rangle
			{}\geq{} 
		(1-L_M)\|z'-z\|^2_{\Gamma^{-1}}.
\end{equation}
The last claim follows from Cauchy-Schwarz and dividing by $\|z'-z\|_{\Gamma^{-1}}$.
\end{proof}
We will rely on the resolvent remaining nonexpansive when preconditioned with a variable stepsize matrix.
\begin{lem}\label{lm:GammaA:nonexpansive}
Let $\Gamma \in \R^{n\times n}$ be positive definite and the operator $A: \R^n \rightrightarrows \R^n$ be maximally monotone. 
Then, $R = (\Gamma^{-1} + A)^{-1}$ is nonexpansive, i.e.
$\|Rx - Ry\|_{\Gamma^{-1}} \leq \|x-y\|_{\Gamma}$ for all $x,y \in \R^n$.
\end{lem}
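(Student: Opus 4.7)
The plan is to reduce this to the standard firm nonexpansiveness of the resolvent of a maximally monotone operator, but working under the $\Gamma^{-1}$ inner product, with the key technical step being to convert an inner-product inequality into the requested mixed-norm inequality via Cauchy--Schwarz.

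First I would unfold the definition of $R$. Given $x,y\in\R^n$, set $u=Rx$ and $v=Ry$. By definition of the inverse, $x\in \Gamma^{-1}u + Au$ and $y\in\Gamma^{-1}v+Av$, so there exist $a\in Au$ and $b\in Av$ with $a=x-\Gamma^{-1}u$ and $b=y-\Gamma^{-1}v$. Note well-definedness of $u,v$ follows just as in \Cref{lem:AFBAfulldomain} since $\Gamma^{-1}+A$ is strongly monotone (maximally) and hence its inverse is single-valued with full domain.

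Next, applying monotonicity of $A$ to the pair $(u,a),(v,b)\in\graph A$ yields
\begin{equation*}
0 \leq \langle a-b,\, u-v\rangle = \langle x-y,\, u-v\rangle - \langle \Gamma^{-1}(u-v),\, u-v\rangle,
\end{equation*}
so that $\|u-v\|_{\Gamma^{-1}}^2 \leq \langle x-y,\, u-v\rangle$. The main (and only mildly subtle) step is to bound the right hand side by the mixed-norm product $\|x-y\|_{\Gamma}\,\|u-v\|_{\Gamma^{-1}}$. This is done by writing
\begin{equation*}
\langle x-y,\, u-v\rangle = \langle \Gamma^{1/2}(x-y),\, \Gamma^{-1/2}(u-v)\rangle \leq \|x-y\|_{\Gamma}\,\|u-v\|_{\Gamma^{-1}},
\end{equation*}
using the symmetric positive definite square root of $\Gamma$ together with the standard Cauchy--Schwarz inequality.

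Finally, combining the two displays gives $\|u-v\|_{\Gamma^{-1}}^2 \leq \|x-y\|_{\Gamma}\,\|u-v\|_{\Gamma^{-1}}$, and dividing by $\|u-v\|_{\Gamma^{-1}}$ (the inequality is trivial when this quantity vanishes) yields the claim $\|Rx-Ry\|_{\Gamma^{-1}}\leq \|x-y\|_{\Gamma}$. I do not anticipate any real obstacle; the only point to be careful about is keeping the two different norms $\|\cdot\|_{\Gamma}$ and $\|\cdot\|_{\Gamma^{-1}}$ straight and introducing $\Gamma^{1/2}$ at precisely the right step so that Cauchy--Schwarz produces the mixed norms rather than a single metric.
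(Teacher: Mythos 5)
Your proposal is correct and follows essentially the same route as the paper's proof: apply monotonicity of $A$ to the two resolvent outputs to get $\|Rx-Ry\|_{\Gamma^{-1}}^2\leq\langle x-y,\,Rx-Ry\rangle$, then use the mixed-norm Cauchy--Schwarz inequality and divide. The only differences are cosmetic (the paper writes the argument with the roles of the input and output points interchanged, and you additionally spell out the $\Gamma^{1/2}$ splitting and the degenerate case), so nothing further is needed.
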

\begin{proof}
Let $v \in Rx$ and $u \in Ry$.
By maximal monotonicity of \(A\),
\begin{align*}
    0
        {}\leq{}
    \langle v-\Gamma^{-1}x-u+\Gamma^{-1}y,x-y\rangle
        {}={}
    -\|x-y\|_{\Gamma^{-1}}^{2}+\langle v-u,x-y\rangle.
\end{align*}
Therefore, using the Cauchy–Schwarz inequality 
\begin{align}
    \|x-y\|_{\Gamma^{-1}}^{2}\leq\langle v-u,x-y\rangle
        {}\leq{}
    \|x-y\|_{\Gamma^{-1}}\|v-u\|_{\Gamma}
    \label{eq:nonexpansicez}
\end{align}
The proof is complete by rearranging.
\end{proof}
}

\subsection{Stochastic results}

The stochastic assumptions on $\hat F$ in \Cref{app:thm:AsymPrecon:convergence} propagates to $\SMC[u]$ and $\SQC[u]$ as captured by the following lemma.

\begin{lem}
\label{lem:MQ:noise}
Suppose \Cref{ass:AsymPrecon:stoch:unbiased,ass:PD:stoch:boundedvar} for $\hat F(z, \xi) = (\nabla_x \hat\varphi(z,\xi), -\nabla_y \hat\varphi(z,\xi))$ as defined in \eqref{eq:AnF}.
Let $\SMC[u]$ and $\MC[u]$ be as defined in \eqref{eq:AsymPrecon:M} and $\SQC[u]$ and $\QC[u]$ as in \eqref{eq:Q:def} with $\theta \in [0,\infty)$.
Then, the following holds for all $z,z' \in \mathbb R^n$
\begin{enumerate}
\item\label{lem:MQ:noise:unbiased} $\mathbb E_{\xi}[\SMC[z][z][\xi]] = \MC[z][z]$ and 
      $\mathbb E_{\xi}[\SQC[z'][z][\xi]] = \QC[z'][z]$
\item\label{lem:MQ:noise:boundvar} $\mathbb E_{\xi}[\|\MC[z][z] - \SMC[z][z][\xi]\|^2_\Gamma] \leq ((1-\theta)^2 + \theta^2)\sigma_F^2$ and 
      $\mathbb E_{\xi}[\|\QC[z'][z] - \SQC[z'][z][\xi]\|^2_\Gamma] \leq \theta^2\sigma_F^2$.
\end{enumerate}
\end{lem}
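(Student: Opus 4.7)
The plan is bookkeeping-driven. Unwind the definitions of $\QC$, $\SQC$, $\MC$, $\SMC$ from \eqref{eq:Q:def} and \eqref{eq:def:M}: the stochastic preconditioner term $\SQC[u][z][\xi]=(0,-\theta\nabla_y\hat\varphi(\bar x,y,\xi))$ lives entirely in the $y$-block, $\hat F=(\nabla_x\hat\varphi,-\nabla_y\hat\varphi)$ is block-split in the obvious way, and the block-diagonal stepsize $\Gamma=\blockdiag(\Gamma_1,\Gamma_2)$ makes the $x$- and $y$-components orthogonal in every $\Gamma$-inner product appearing in the statement. Both claims then reduce to elementary algebra on $\nabla_x\hat\varphi$ and $\nabla_y\hat\varphi$.

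For the unbiasedness identities in part~\ref{lem:MQ:noise:unbiased}, I would apply \Cref{ass:AsymPrecon:stoch:unbiased} block-wise to obtain $\mathbb E_\xi[\nabla_x\hat\varphi(z,\xi)]=\nabla_x\varphi(z)$ and $\mathbb E_\xi[\nabla_y\hat\varphi(\bar x,y,\xi)]=\nabla_y\varphi(\bar x,y)$, and then let linearity of expectation propagate through the explicit linear expressions $\SQC[z'][z][\xi]$ and $\SMC[z][z][\xi]=\hat F(z,\xi)-\SQC[z][z][\xi]$ to recover $\QC[z'][z]$ and $\MC[z][z]$ respectively.

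For part~\ref{lem:MQ:noise:boundvar}, the $\SQC$-variance is essentially immediate: the difference $\SQC[z'][z][\xi]-\QC[z'][z]$ is $y$-supported and equals $\theta\,(0,\nabla_y\hat\varphi(\bar x,y,\xi)-\nabla_y\varphi(\bar x,y))$, whose squared $\Gamma$-norm is $\theta^2\|\nabla_y\hat\varphi-\nabla_y\varphi\|_{\Gamma_2}^2$, which \Cref{ass:PD:stoch:boundedvar} bounds by $\theta^2\sigma_F^2$. For the $\SMC$-variance I would decompose $\SMC[z][z][\xi]-\MC[z][z]=(\hat F(z,\xi)-F(z))+(\QC[z][z]-\SQC[z][z][\xi])$ and expand $\|\cdot\|_\Gamma^2$; $\Gamma$-orthogonality between the $x$-block of $\hat F-F$ and the $y$-supported $\QC-\SQC$ kills half of the cross term, while the remaining inner product on the $y$-block evaluates exactly, collapsing the expression to $\|\nabla_x\varphi-\nabla_x\hat\varphi\|_{\Gamma_1}^2+(1-\theta)^2\|\nabla_y\varphi-\nabla_y\hat\varphi\|_{\Gamma_2}^2$. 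Combining this block-wise identity with the bounded-variance hypothesis and with the $\SQC$-bound just proved --- while redistributing the $\theta^2$-slack between the two blocks rather than bounding each independently --- then delivers the advertised constant $((1-\theta)^2+\theta^2)\sigma_F^2$.

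The main obstacle is coefficient bookkeeping in the $\SMC$ step: the target constant $(1-\theta)^2+\theta^2$ is strictly smaller than $1$ for $\theta\in(0,1)$, so a naive separation of the two block-wise variance contributions (giving at best $\sigma_F^2$) is too lossy. The key is to treat the $x$- and $y$-blocks jointly against the single aggregate bound $\mathbb E_\xi\|\hat F-F\|_\Gamma^2\le\sigma_F^2$, trading a $\theta^2$-portion of $y$-block variance against the $x$-block via the $\SQC$-identity established in the first half of the lemma; everything else is routine.
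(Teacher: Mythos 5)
Your part~1 and your bound on the $\hat Q$-term are correct and coincide with the paper's treatment (both are dispatched there as immediate consequences of unbiasedness and the bounded-variance assumption), and your exact block identity
\[
\mathbb E_\xi\big[\|M(z)-\hat M(z,\xi)\|_\Gamma^2\big] = p + (1-\theta)^2 q,
\qquad
p:=\mathbb E_\xi\|\nabla_x\hat\varphi(z,\xi)-\nabla_x\varphi(z)\|_{\Gamma_1}^2,\;\;
q:=\mathbb E_\xi\|\nabla_y\hat\varphi(z,\xi)-\nabla_y\varphi(z)\|_{\Gamma_2}^2,
\]
is precisely the first line of the paper's computation. The gap is your last step. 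The hypotheses give only the aggregate bound $p+q\le\sigma_F^2$, and the $\hat Q$-bound you want to trade against gives information about $q$ alone ($\theta^2 q\le\theta^2\sigma_F^2$); but the coefficient that overshoots the target $(1-\theta)^2+\theta^2$ is the $1$ multiplying $p$, i.e.\ the $x$-block. No ``redistribution of the $\theta^2$-slack'' can repair this: in the admissible extreme $q=0$, $p=\sigma_F^2$ one has $p+(1-\theta)^2q=\sigma_F^2>((1-\theta)^2+\theta^2)\sigma_F^2$ for every $\theta\in(0,1)$, so the inequality you are aiming for is simply not a consequence of the aggregate variance bound in that regime, and your sketch stalls exactly at the step you yourself flag as the crux. (For $\theta=0$ or $\theta\ge1$ your identity already suffices, since then $\max\{1,(1-\theta)^2\}\le(1-\theta)^2+\theta^2$ and the aggregate bound closes the argument.)

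For comparison, the paper does not redistribute: writing $a_x,a_y$ for the two gradient errors, it rewrites the error vector as $(1-\theta)(a_x,a_y)+\theta(a_x,0)$ and bounds the expected squared $\Gamma$-norm by $(1-\theta)^2\mathbb E\|(a_x,a_y)\|_\Gamma^2+\theta^2\mathbb E\|(a_x,0)\|_\Gamma^2\le((1-\theta)^2+\theta^2)\sigma_F^2$, invoking unbiasedness. Note that this split silently discards the cross term $2\theta(1-\theta)\mathbb E\|a_x\|_{\Gamma_1}^2$, which is non-positive only for $\theta=0$ or $\theta\ge1$ --- precisely the instantiations ($\theta=0$ and $\theta=1$) emphasized and used downstream; for intermediate $\theta$ the same extreme case as above shows the advertised constant cannot be derived from the stated assumptions at all. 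So your instinct that the constant is delicate for $\theta\in(0,1)$ is sound, but as submitted the decisive inequality in your argument is asserted rather than derived, and the route you propose cannot be completed.
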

\begin{proof}
Unbiasedness follows immediately through \Cref{ass:AsymPrecon:stoch:unbiased}.
For the second claim we have for all $(x,y)=z \in \R^n$
\begin{equation}
\begin{split}
\mathbb E_{\xi}[\|\MC[z][z] - \SMC[z][z][\xi]\|^2_\Gamma]
&{}= 
    \mathbb E_{\xi}\left[ \left\|
        \begin{pmatrix}
        \nabla_{x}\hat\varphi(z,\xi)-\nabla_{x}\varphi(z) \\
        (1-\theta)(
            \nabla_{y}\hat\varphi(z,\xi)
            - \nabla_{y}\varphi(z'))
        \end{pmatrix}
    \right\|^2_\Gamma\right] \\
&{}= 
    \mathbb E_{\xi}\left[ \left\|
        \begin{pmatrix}
        (1-\theta)(\nabla_{x}\hat\varphi(z,\xi)-\nabla_{x}\varphi(z))
        +\theta(\nabla_{x}\hat\varphi(z,\xi)-\nabla_{x}\varphi(z)) \\
        (1-\theta)(
            \nabla_{y}\hat\varphi(z,\xi)
            - \nabla_{y}\varphi(z))
        \end{pmatrix}
    \right\|^2_\Gamma\right] \\
\dueto{(\Cref{ass:AsymPrecon:stoch:unbiased})}&{}\leq
    (1-\theta)^2\mathbb E_{\xi}\left[ \left\|
        \begin{pmatrix}
        \nabla_{x}\hat\varphi(z,\xi)-\nabla_{x}\varphi(z) \\
            \nabla_{y}\hat\varphi(z,\xi)
            - \nabla_{y}\varphi(z)
        \end{pmatrix}
    \right\|^2_\Gamma\right]
    + \theta^2\mathbb E_{\xi}\left[ \left\|
        \begin{pmatrix}
        \nabla_{x}\hat\varphi(z,\xi)-\nabla_{x}\varphi(z) \\
        0
        \end{pmatrix}
    \right\|^2_\Gamma\right] \\
\dueto{(\Cref{ass:PD:stoch:boundedvar})}&{}\leq
    ((1-\theta)^2 + \theta^2) \sigma^2_F.
\end{split}
\end{equation}
The last claim follows directly through \Cref{ass:PD:stoch:boundedvar}.
This completes the proof.
\end{proof}

\begin{thm}\label{app:thm:AsymPrecon:convergence}
    Suppose that \cref{ass:AsymPrecon:Minty:Struct} to \ref{ass:AsymPrecon:stoch:unbiased} and \ref{ass:PD} hold.
    Moreover, suppose that \(\alpha_k\in(0,1)\), \(\theta\in[0,\infty)\) and for positive parameter $b$ and $\varepsilon$ the following holds,
    \begin{gather}\label{app:thm:AsymPrecon:convergence:conditions}
    \mu \coloneqq \tfrac{1}{1+b}(1 - \tfrac {1}{\varepsilon (1-L_M)^2})
        + \tfrac{2\rho}{\bar \gamma} 
        -\alpha_0 
        - 2\alpha_0 (\hat{c}_1 + 2\hat{c}_2(1+\hat{c}_3))A 
         > 0, \\
    \qquad 
    1 - 4\hat{c}_2\alpha_0 > 0
    \quad \text{and} \quad
    1 - \tfrac {1}{\varepsilon (1-L_M)^2} \geq 0 \notag
    \end{gather}
    where \(\bar \gamma\) denotes the smallest eigenvalue of \(\Gamma\), $A \geq {(1+4\hat{c}_2\alpha_0^2)(\varepsilon + \tfrac 1b(1 - \tfrac {1}{\varepsilon (1-L_M)^2}))}/{(1 - 4\hat{c}_2\alpha_0)}$ and 
    \begin{gather*}
    \hat{c}_1 \coloneqq L^2_{\widehat{xz}}\|\Gamma D_{\widehat{xz}}\|+2(1-\theta)^2L^2_{\widehat{yz}}\|\Gamma D_{\widehat{yz}}\|+2\theta^2L^2_{\widehat{yy}}\|\Gamma_2D_{\widehat{yy}}\|, 
    \quad
    \hat{c}_2 \coloneqq 2\theta^2L^2_{\widehat{yx}}\|\Gamma_1 D_{\widehat{yx}}\|, 
    \quad
    \hat{c}_3 \coloneqq L^2_{\widehat{xz}}\|\Gamma D_{\widehat{xz}}\|, \\
    L_M^2 \coloneqq \max\big\{L_{xx}^2\|D_{xx}\Gamma_1\|+L_{yx}^2\|D_{yx}\Gamma_1\|, \|L_{xy}^2\|D_{xy}\Gamma_2\|+L_{yy}^2\|D_{yy}\Gamma_2\|\big\}.
    \end{gather*}
    Consider the sequence \(\seq{z^k}\) generated by \Cref{alg:NP-PDHG}.
    Then, the following holds for all $z^\star \in \mathcal S^\star$ 
    \begin{equation}\label{app:thm:AsymPrecon:rate}
        \sum_{k=0}^K \tfrac{\alpha_k}{\sum_{j=0}^K \alpha_{j}} \mathbb E[\|\Gamma^{-1}\hat z^{k} - S_{z^k}(\z^k;\z^k)\|^{2}_\Gamma]
            {}\leq{}
        \frac{\mathbb E[\|z^{0}-z^{\star}\|_{\Gamma^{-1}}^{2}] + A \mathbb E[\|\Gamma^{-1}\hat z^{-1}-S_{z^{-1}}(z^{-1};\z^{-1})\|_{\Gamma}^{2}] + C \sigma_F^2\sum_{j=0}^K \alpha_{j}^{2}}{\mu \sum_{j=0}^K \alpha_{j}}
    \end{equation}
    where $C \coloneqq 2(A + \alpha_0 (\varepsilon + \tfrac 1b(1 - \tfrac {1}{\varepsilon (1-L_M)^2})))(\Theta+2\hat{c}_2) + 1 + 2(\hat{c}_1 + 2\hat{c}_2(1+\hat{c}_3))A$ and $\Theta = (1-\theta)^2 + 2\theta^2$.
\end{thm}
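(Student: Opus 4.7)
The plan is to follow the architecture of \Cref{app:thm:const:convergence} but lifted to the preconditioned metric $\|\cdot\|_{\Gamma^{-1}}$ and adapted to the nonlinear asymmetric preconditioner $\PC[u]$. I would track the Lyapunov-type potential
\begin{equation*}
\U_{k+1} \coloneqq \|z^{k+1}-z^\star\|^2_{\Gamma^{-1}} + A_{k+1}\|\Gamma^{-1}\hat z^k - S_{z^k}(z^k;\z^k)\|^2_\Gamma + B_{k+1}\|z^{k+1}-z^k\|^2_{\Gamma^{-1}}
\end{equation*}
where the middle term plays the role that $\|h^k-Hz^k\|^2$ played in the smooth-constrained case. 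The first move is to expand $\|z^{k+1}-z^\star\|^2_{\Gamma^{-1}}$ using \eqref{eq:AFBA:plus}, take conditional expectation in $\bar\xi_k$, and use unbiasedness together with the variance bound (\Cref{lem:MQ:noise}) to reduce the stochastic cross-term to the deterministic inner product of $\Gamma^{-1}\hat z^k - S_{z^k}(\z^k;\z^k)$ against $z^k-z^\star$.

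That inner product is then handled in the spirit of \eqref{eq:const:innerprods}: insert $\z^k$ to split into three pieces, apply the $\tfrac{1}{2}$-cocoercivity of $H_{z^k}$ from \Cref{lm:H:cocoercive} to the middle, and invoke weak MVI on the inclusion $\tfrac{1}{\bar\gamma}(h^k - H_{z^k}(\z^k)+Q_{z^k}(\z^k))\in T\z^k$ obtained from \eqref{eq:AFBA:bar} for the third piece, which produces the $-\tfrac{2\rho}{\bar\gamma}$ contribution with $\bar\gamma$ the smallest eigenvalue of $\Gamma$. A Young step with parameter $\varepsilon$ combined with the $(1-L_M)$-strong monotonicity from \Cref{lm:H:strmonotone} lets me trade the remaining $\|\z^k-z^k\|^2_{\Gamma^{-1}}$ for $\|H_{z^k}(z^k)-H_{z^k}(\z^k)\|^2_\Gamma$, which is exactly what the requirement $1-\tfrac{1}{\varepsilon(1-L_M)^2}\ge 0$ makes usable.

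The technical core is the recursion for the bias term. Writing $\SPC[u][z][\xi]-\hat F(z,\xi)=\Gamma^{-1}z-\SMC[u][z][\xi]$, the update \eqref{eq:AFBA:h} collapses into $(1-\alpha_k)$ times the previous bias plus the centred pieces $(\SMC[z^{k-1}][z^{k-1}][\xi_k]-M_{z^{k-1}}(z^{k-1}))-(\SMC[z^k][z^k][\xi_k]-M_{z^k}(z^k))$ and $\SQC[z^{k-1}][\z^{k-1}][\hxi_k]-\SQC[z^{k-1}][\z^{k-1}][\hxi_{k-1}]$. Conditional on $\F_k$ the cross terms vanish by unbiasedness, leaving a $(1-\alpha_k)^2$ contraction plus variances controlled via the mean-square Lipschitz constants of \Cref{ass:PD:Lipmean:phi} in terms of $\|z^k-z^{k-1}\|^2_{\Gamma^{-1}}$, $\|\z^{k-1}-z^{k-1}\|^2_{\Gamma^{-1}}$ and $\sigma_F^2$. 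The constants $\hat c_1,\hat c_2,\hat c_3$ are precisely what aggregate these pieces: $\hat c_1$ bundles the $L_{\widehat{xz}},L_{\widehat{yz}},L_{\widehat{yy}}$ contributions multiplying $\|z^k-z^{k-1}\|^2$, while $\hat c_2$ (with $L_{\widehat{yx}}$) captures the additional variance from the $\nabla_y\hat\varphi(\bar x,y,\xi)$ evaluations forced by the nonlinear preconditioner, and $\hat c_3$ appears when $\|\z^{k-1}-z^{k-1}\|^2$ is re-expanded against the very bias it feeds into, which is what produces the closure constraint $1-4\hat c_2\alpha_0>0$.

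With $A_k=A$, $B_k$ a multiple of $A$ tuned to absorb the $\|z^k-z^{k-1}\|^2_{\Gamma^{-1}}$ residue, and $\varepsilon_k=\varepsilon$, the remaining algebra mirrors \eqref{eq:const:descent}--\eqref{eq:const:A} and delivers a per-step descent $\E[]{\U_{k+1}}-\E[]{\U_k}\le -\alpha_k\mu\E[]{\|\Gamma^{-1}\hat z^k-S_{z^k}(\z^k;\z^k)\|^2_\Gamma}+C\alpha_k^2\sigma_F^2$ whenever \eqref{app:thm:AsymPrecon:convergence:conditions} holds. Telescoping, taking total expectation, and converting the weighted sum into an expectation over a sampled index as in \citet{ghadimi2013stochastic} produces the stated rate. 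The main obstacle is the bookkeeping of the $\SQC$ cross-variance driven by the $\hxi_k,\hxi_{k-1}$ samples, which is absent in the constrained analysis of \Cref{app:thm:const:convergence}: the change of variables remarked below \eqref{eq:AFBA:h} is exactly what makes the resulting pieces centred, but it forces the simultaneous appearance of $\hat c_2$ both in the noise term and inside the contraction, so the Young parameters $\varepsilon$ and $b$ must be chosen carefully to keep both the stepsize requirement on $A$ and the condition $\mu>0$ compatible.
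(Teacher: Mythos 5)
Your plan follows essentially the same route as the paper's proof: the identical three-term potential (your middle term $\|\Gamma^{-1}\hat z^k - S_{z^k}(z^k;\z^k)\|_\Gamma^2$ is exactly the paper's $\|s^k-S_{z^k}(z^k;\z^k)\|_\Gamma^2$ with $s^k=h^k-\hat Q_{z^k}(\z^k,\hxi_k)$), the same splitting of the inner product via $\nicefrac12$-cocoercivity and $(1-L_M)$-monotonicity of $H_{z^k}$, weak MVI applied to the element produced by the resolvent step with the $2\rho/\bar\gamma$ conversion, the centred bias recursion enabled by the extra $\hat Q(\cdot,\hxi_k)/\hat Q(\cdot,\hxi_{k-1})$ correction, and the same constant choices, telescoping and random-iterate conversion. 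The only imprecisions are minor and resolved in the paper's write-up: weak MVI is invoked on $s^k-S_{z^k}(\z^k)\in F\z^k+A\z^k$ (no $1/\bar\gamma$ factor inside the inclusion; $\bar\gamma$ only enters when passing to the $\Gamma$-norm), and the $\hat c_2$-variance actually involves $\|\bar x^k-\bar x^{k-1}\|$, which is controlled via nonexpansiveness of the preconditioned resolvent (\Cref{lm:GammaA:nonexpansive}) and the primal component of the $s$-recursion—this is where $\hat c_3$ and the condition $1-4\hat c_2\alpha_0>0$ arise—rather than $\|\z^{k-1}-z^{k-1}\|$.
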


\begin{appendixproof}{app:thm:AsymPrecon:convergence}
The proof relies on tracking the two following important operators instead of $F$ and $\hat F$
\begin{equation}
\label{eq:AsymPrecon:M}
\MC[u][z]:=F(z)-\QC[z][z] \quad \text{and} \quad \SMC[u][z]:=\hat F(z,\xi)-\SQC[z][z].
\end{equation}

We will denote  $\hat {\bar{H}}_k := \SPC[k][\z^k][\bar \xi_k] - \hat F(\bar z^k, \bar \xi_k)$, so that $z^{k+1} = z^k - \alpha_k \Gamma (h^k - \hat{\bar H}_k)$.
We will further need the following change of variables to later be able to apply weak MVI (see \Cref{app:bias-correction}):
\begin{equation}
\begin{split}
s^k {}={}& h^k - \SQC[z^k][\z^k][\xi_k'] \\
\hat{\bar{S}}_k {}={}& \hat{\bar{H}}_k - \SQC[z^k][\z^k][\xi_k'] \\
S_u(\z) {}={}& H_u(\z) - \QC[u][\z] \\
S_{u}(z;\z) {}={}& H_{u}(z) - \QC[u][\z]
\end{split}
\end{equation}
where \(\QC[u][z]\) and \(H_u\) are as defined in \Cref{sec:precon}.

In contrast with the unconstrained smooth case we will rely on a slightly different potential function, namely, 
\begin{align*}
    \U_{k+1} 
        {}\coloneqq{}
    \|z^{k+1}-z^{\star}\|^{2}_{\Gamma^{-1}}
    +A_{k+1}\|s^{k}-S_{z^k}(z^{k};\z^{k})\|^{2}_{\Gamma}
    +B_{k+1}\|z^{k+1}-z^{k}\|^{2}_{\Gamma}
\end{align*}
where \(\seq{A_{k}}\) and \(\seq{B_{k}}\) are positive scalar parameters to be identified.

We start by writing out one step of the update
\begin{align}
\|z^{k+1} - z^\star\|^2_{\Gamma^{-1}} 
& = \|z^k - z^\star\|^2_{\Gamma^{-1}} - 2\alpha_k \langle h^k - \hat{\bar H}_k, z^k - z^\star\rangle + \alpha_k^2 \|h^k - \hat{\bar H}_k\|^2_\Gamma \\
& = \|z^k - z^\star\|^2_{\Gamma^{-1}} - 2\alpha_k \langle s^k - \hat{\bar S}_k, z^k - z^\star\rangle + \alpha_k^2 \|s^k - \hat{\bar S}_k\|^2_\Gamma
\label{eq:AFBA:from_kp1_to_k_2}
\end{align}
In the algorithm, $s^k$ estimates $S_{z^k}(z^k;\z^k)$. 
Let us quantify how good this estimation is. 
We will make use of the careful choice of the bias-correction term to shift the noise index by 1 in the second equality.
	\begin{align*}
	s^{k} - S_{z^k}(z^k;\z^k) 
    ={}& \MC[z^k][z^k] + \QC[z^k][\z^k] - \SMC[z^k][z^k][\xi_k] - \SQC[z^k][\z^k][\xi_k'] \\
      &\quad + (1-\alpha_{k}) (h^{k-1} - \Gamma^{-1}z^{k-1} + \SMC[z^{k-1}][z^{k-1}][\xi_k] - \SQC[z^{k-1}][\z^{k-1}][\xi_{k-1}'] + \SQC[z^{k-1}][\z^{k-1}][\xi_{k}'])\\
    ={}& \MC[z^k][z^k] + \QC[z^k][\z^k] - \SMC[z^k][z^k][\xi_k] - \SQC[z^k][\z^k][\xi_k'] \\
        &\quad + (1-\alpha_{k}) (s^{k-1} + \SQC[z^{k-1}][\z^{k-1}][\bar\xi_{k}'] - \Gamma^{-1}z^{k-1} + \SMC[z^{k-1}][z^{k-1}][\xi_k]) \\
    ={}& \MC[z^k][z^k] + \QC[z^k][\z^k] - \SMC[z^k][z^k][\xi_k] - \SQC[z^k][\z^k][\xi_k'] 
        + (1-\alpha_{k}) (s^{k-1} - S_{z^{k-1}}(z^{k-1};\z^{k-1}))\\
        &\quad + (1-\alpha_{k}) (
          \SMC[z^{k-1}][z^{k-1}][\xi_k] - \MC[z^{k-1}][z^{k-1}] 
          + \SQC[z^{k-1}][\z^{k-1}][\bar\xi_{k}'] - \QC[z^{k-1}][\z^{k-1}])
  \end{align*}
Using the shorthand notation 
\begin{align*}
    \tilde s^k 
        {}\coloneqq{}&
    s^k- S_{z^k}(z^k;\z^k),
    \\
    \tilde Q_{z^k}(\z^k, \xi_k^\prime) 
        {}\coloneqq{}&
     \QC[z^k][\z^k] - \SQC[z^k][\z^k][\xi_k'],
     \\
     \tilde M (z^k,\xi_k) 
        {}\coloneqq{}&
    \MC[z^k][z^k] - \SMC[z^k][z^k][\xi_k],
 \end{align*} 
 it follows that
  \begin{align*}
    \|\tilde s^k\|^2_\Gamma 
        {}={}& 
    (1-\alpha_{k})^2 \|\tilde s^{k-1}\|^2_\Gamma 
        {}+{}
    \|\tilde M(z^k, \xi_k)  + \tilde Q_{z^k}(\z^k, \xi_k^\prime)  
        {}-{}
    (1-\alpha_{k}) (
        \tilde M (z^{k-1}, \xi_k)
        +
        \tilde Q_{z^{k-1}}(\z^{k-1}, \xi_k^\prime)  
        \|^2_\Gamma  
  \\
        &{}+{}
    2 (1-\alpha_{k}) 
    \langle \tilde s^{k-1}, \tilde M(z^k, \xi_k)  + \tilde Q_{z^k}(\z^k, \xi_k^\prime) 
        {}-{}
    (1-\alpha_{k}) \big(%
        \tilde M(z^{k-1}, \xi_k) 
            {}+{}
        \tilde Q_{z^{k-1}}(\z^{k-1}, \xi_k^\prime)
        \big)
     \rangle
     \numberthis\label{eq:sk:1}
    \end{align*}
	In the scalar product,  the left term is known when $z^k$ is known. Moreover, since \(
    \mathbb E\big[ \cdot \mid {\F}_k\big] 
        {}={}
    \mathbb E\big[ \mathbb E\big[ \cdot \mid {\F}_k^\prime\big] \mid {\F}_k\big] 
\), owing to \(\F_k \subset {\F}^\prime_{k}\), we have 
\begin{align*}
    \mathbb E\Big[%
        \tilde M(z^k, \xi_k)  + \tilde Q_{z^k}(\z^k, \xi_k^\prime) 
        {}-{}
    (1-\alpha_{k})& \big(%
        \tilde M(z^{k-1}, \xi_k) 
            {}+{}
        \tilde Q_{z^{k-1}}(\z^{k-1}, \xi_k^\prime)
        \big)
        \mid \mathcal F_k
        \Big] \\
            &{}={}
        \mathbb E\left[%
        \tilde M(z^k, \xi_k)
        - (1-\alpha_{k})\tilde M(z^{k-1}, \xi_k) 
        \mid \mathcal F_k
        \right]
            {}={}
        0,
\end{align*}
where we use \Cref{ass:AsymPrecon:stoch:unbiased} through \Cref{lem:MQ:noise:unbiased}.

 Since the second moment is larger than the variance we have
    \begin{align*}
        \mathbb{E}&\left[\|\tilde{M}(z^{k},\xi_{k})+\tilde{Q}_{z^{k}}(\z^{k},\xi_{k}^{\prime})-\tilde{M}(z^{k-1},\xi_{k})-\tilde{Q}_{z^{k-1}}(\z^{k-1},\xi_{k}^{\prime})\|_{\Gamma}^{2}\mid\mathcal{F}_{k}\right]
            {}\leq{}
        \\   
        &\quad  \mathbb{E}\left[\| \SMC[z^k][z^k][\xi_k] - \SMC[z^{k-1}][z^{k-1}][\xi_k] + \SQC[z^k][\z^k][\xi_k'] - \SQC[z^{k-1}][\z^{k-1}][\xi_{k}'] \|^2_\Gamma \; |\; \mathcal F_k\right]
        \numberthis\label{eq:secondmoment}
    \end{align*}
    Using the Young inequality it follows from \eqref{eq:sk:1}, \eqref{eq:secondmoment} that
    \begin{align*}
    \mathbb E[\|\tilde s^{k}\|^2_\Gamma \mid \mathcal F_k] 
        {}\leq{}&
    (1-\alpha_{k})^2 \|\tilde s^{k-1}\|^2_\Gamma 
        {}+{}
    2\alpha_{k}^2 \mathbb E[\|\tilde{M}(z^{k},\xi_{k})+\tilde{Q}_{z^{k}}(\z^{k},\xi_{k}^{\prime})\|^2_\Gamma  \; |\; \mathcal F_k] 
    \\
        &{}+{} 
    2(1-\alpha_{k})^2 \mathbb E[ \| \tilde{M}(z^{k},\xi_{k})+\tilde{Q}_{z^{k}}(\z^{k},\xi_{k}^{\prime})-\tilde{M}(z^{k-1},\xi_{k})-\tilde{Q}_{z^{k-1}}(\z^{k-1},\xi_{k}^{\prime}) \|^2_\Gamma \mid \mathcal F_k] 
    \\
        {}\leq{}&
    (1-\alpha_{k})^2 \|\tilde s^{k-1}\|^2_\Gamma 
        {}+{}
    2\alpha_{k}^2 \mathbb E[\|\MC[z^k][z^k] - \SMC[z^k][z^k][\xi_k] + \QC[z^k][\z^k] - \SQC[z^k][\z^k][\xi_k']\|^2_\Gamma  \mid \mathcal F_k] 
    \\
        &{}+{}
    \mathbb E[ 2(1-\alpha_{k})^2 \| \SMC[z^k][z^k][\xi_k] - \SMC[z^{k-1}][z^{k-1}][\xi_k] + \SQC[z^k][\z^k][\xi_k'] - \SQC[z^{k-1}][\z^{k-1}][\xi_{k}'] \|^2_\Gamma \; |\; \mathcal F_k] 
    \numberthis\label{eq:AFBA:approx_Hzk}
    \end{align*}
To bound the second last term of \eqref{eq:AFBA:approx_Hzk} we use unbiasedness due to \Cref{ass:AsymPrecon:stoch:unbiased} through \Cref{lem:MQ:noise:unbiased} and that  \(
    \mathbb E\big[ \cdot \mid {\F}_k\big] 
        {}={}
    \mathbb E\big[ \mathbb E\big[ \cdot \mid {\F}_k^\prime\big] \mid {\F}_k\big] 
\), owing to \(\F_k \subset {\F}^\prime_{k}\)
\begin{align*}
\mathbb E[\|\MC[z^k][z^k] - \SMC[z^k][z^k][\xi_k] &+ \QC[z^k][\z^k] - \SQC[z^k][\z^k][\xi_k']\|^2_\Gamma  \mid \mathcal F_k] \\
&= \mathbb E[\|\MC[z^k][z^k] - \SMC[z^k][z^k][\xi_k]\|^2_\Gamma  \mid \mathcal F_k] 
        {}+{}
    \mathbb E[\mathbb E[\|\QC[z^k][\z^k] - \SQC[z^k][\z^k][\xi_k']\|^2_\Gamma  \mid \mathcal F_k'] \mid \mathcal F_k] \\
& \leq \Theta \sigma_F^2
\numberthis\label{eq:AFBA:approx_Hzk_noise}
\end{align*}
with $\Theta = (1-\theta)^2 + 2\theta^2$.
where the last inequality follows from \Cref{ass:AsymPrecon:stoch:unbiased,ass:PD:stoch:boundedvar} through \Cref{lem:MQ:noise:boundvar}.

To bound the last term of \eqref{eq:AFBA:approx_Hzk} we use the particular choice of $\QC[u]$,
\begin{equation}
\begin{split}
\SMC[z^k][z^k][\xi_k]& - \SMC[z^{k-1}][z^{k-1}][\xi_k] + \SQC[z^k][\z^k][\xi_k'] - \SQC[z^{k-1}][\z^{k-1}][\xi_{k}'] \\
    &= 
    \begin{pmatrix}
    \nabla_{x}\hat\varphi(z^k,\xi_k)-\nabla_{x}\hat\varphi(z^{k-1},\xi_k) \\
    (1-\theta)(
        \nabla_{y}\hat\varphi(z^{k-1}\xi_k)
        - \nabla_{y}\hat\varphi(z^{k},\xi_k))
    -\theta(
        \nabla_{y}\hat\varphi(\bar x^{k},y^{k},\xi_k')
        - \nabla_{y}\hat\varphi(\bar x^k,y^k,\xi_k')
    \end{pmatrix}.
\end{split}
\end{equation}
So \Cref{ass:PD:Lipmean:phi} applies after application of Young's inequality and the tower rule, leading to the following bound
\begin{align*}
\mathbb E[\|\SMC[z^k][z^k][\xi_k]& - \SMC[z^{k-1}][z^{k-1}][\xi_k] + \SQC[z^k][\z^k][\xi_k'] - \SQC[z^{k-1}][\z^{k-1}][\xi_k']\|^2_\Gamma  \mid \mathcal F_k] \\
    &{}=
        \mathbb E[\|\nabla_{x}\hat\varphi(z^k,\xi_k)-\nabla_{x}\hat\varphi(z^{k-1},\xi_k)\|^2_{\Gamma_1}  \mid \mathcal F_k] \\
        &\quad +\mathbb E[\|(1-\theta)(\nabla_{y}\hat\varphi(z^{k-1}\xi_k) - \nabla_{y}\hat\varphi(z^{k},\xi_k))
        -\theta \nabla_{y}\hat\varphi(\bar x^{k},y^{k},\xi_k') - \nabla_{y}\hat\varphi(\bar x^k,y^k,\xi_k')\|^2_{\Gamma_2}  \mid \mathcal F_k] \\
    &{}\leq
        \mathbb E[\|\nabla_{x}\hat\varphi(z^k,\xi_k)-\nabla_{x}\hat\varphi(z^{k-1},\xi_k)\|^2_{\Gamma_1}   \mid \mathcal F_k]\\
        &\quad +2(1-\theta)^2\mathbb E[\|(\nabla_{y}\hat\varphi(z^{k-1}\xi_k) - \nabla_{y}\hat\varphi(z^{k},\xi_k))\|^2_{\Gamma_2}   \mid \mathcal F_k]\\
        &\quad +2\theta^2\mathbb E\left[\mathbb E[\|\nabla_{y}\hat\varphi(\bar x^{k},y^{k},\xi_k') - \nabla_{y}\hat\varphi(\bar x^k,y^k,\xi_k')\|^2_{\Gamma_2} \mid \mathcal F_k'] \mid \mathcal F_k\right]\\
    \dueto{\Cref{ass:PD:Lipmean:phi}}{}&{}\leq 
        L^2_{\widehat{xz}}\|z^{k} - z^{k-1}\|_{D_{\widehat{xz}}}^{2}
        + 2(1-\theta)^2L^2_{\widehat{yz}}\|z^{k} - z^{k-1}\|_{D_{\widehat{yz}}}^{2} \\
        &\quad + 2\theta^2L^2_{\widehat{yy}}\|y^{k} - y^{k-1}\|_{D_{\widehat{yy}}}^{2}
        + 2\theta^2L^2_{\widehat{yx}}\|\bar x^{k} - \bar x^{k-1}\|_{D_{\widehat{yx}}}^{2} \\
    &{}\leq \hat{c}_1\|z^{k} - z^{k-1}\|_{\Gamma^{-1}}^{2}
        + \hat{c}_2\|\bar x^{k} - \bar x^{k-1}\|_{\Gamma^{-1}}^{2}
\numberthis\label{eq:AFBA:lipsmean}
\end{align*}
where $\hat{c}_1 \coloneqq L^2_{\widehat{xz}}\|\Gamma D_{\widehat{xz}}\|+2(1-\theta)^2L^2_{\widehat{yz}}\|\Gamma D_{\widehat{yz}}\|+2\theta^2L^2_{\widehat{yy}}\|\Gamma_2D_{\widehat{yy}}\|$ and
$\hat{c}_2 \coloneqq 2\theta^2L^2_{\widehat{yx}}\|\Gamma_1 D_{\widehat{yx}}\|$.

Using \eqref{eq:AFBA:lipsmean} and \eqref{eq:AFBA:approx_Hzk_noise} in \eqref{eq:AFBA:approx_Hzk} yields,
\begin{equation}
\begin{split}
\mathbb E[\|\tilde s^{k}\|^2_\Gamma \mid \mathcal F_k] 
&{}\leq{}
    (1-\alpha_{k})^2 \|\tilde s^{k-1}\|^2_\Gamma + 2\alpha_{k}^2 \Theta\sigma_F^2 
        {}+{}
    2(1-\alpha_{k})^2 \left( \hat{c}_1 \|z^{k} - z^{k-1}\|_{\Gamma^{-1}}^{2}
        + \hat{c}_2 \|\bar x^{k} - \bar x^{k-1}\|_{\Gamma_1^{-1}}^{2}\right).
    \label{eq:AFBA:approx_Hzk:2}
\end{split}
\end{equation}

    To majorize $\|\bar x^k - \bar x^{k-1}\|_{\Gamma_1^{-1}}$ in \eqref{eq:AFBA:approx_Hzk:2} let $s^k_x$ be the primal components of $s^k$ in what follows. 
    Recall that $A$ decomposes as specified in \Cref{sec:precon}, such that we can write \(s^k_x \in \Gamma^{-1}_1\bar x^k + A_1(\bar x^k)\).
    By monotonicity of \(A_1\) we have through \Cref{lm:GammaA:nonexpansive} that
    \begin{equation}
    \label{eq:nonexpansicez}
    \|\bar{x}^{k}-\bar{x}^{k-1}\|_{\Gamma^{-1}_1} \leq \|s^{k}_x-s^{k-1}_x\|_{\Gamma_1}.
    \end{equation}
    We can go on as
    \begin{align*}
    \|s^k_x - s^{k-1}_x\|_{\Gamma_1}
    &{}= \|\Gamma_1^{-1}x^k - \nabla_x \hat \phi(z^k, \xi_k) + (1-\alpha_{k}) \big( \Gamma_1^{-1}(x^{k-1} - x^{k-1}) + \hat \nabla_x \hat \phi(z^{k-1}, \xi_k)\big) - s^{k-1}_x\|_{\Gamma_1}
    \\
    &{}\leq (1-\alpha_{k})\|x^k - x^{k-1}\|_{\Gamma_1^{-1}} 
        + (1-\alpha_{k}) \| \nabla_x \hat \phi(z^k, \xi_k) - \nabla_x \hat \phi(z^{k-1}, \xi_k)\|_{\Gamma_1} \\
        &\qquad + \alpha_{k}\|\Gamma_1^{-1}x^k - \nabla_x \hat \phi(z^k, \xi_k) - s^{k-1}_x\|_{\Gamma_1} \\
    \dueto{(\Cref{ass:PD:Lipmean:phi})}&{}\leq (1-\alpha_{k})\|x^k - x^{k-1}\|_{\Gamma_1^{-1}} + (1-\alpha_{k})L_{\widehat{xz}} \| z^k - z^{k-1}\|_{D_{\widehat{xz}}} \\
    &\qquad + \alpha_{k}\|\Gamma_1^{-1}x^k - \nabla_x \hat \phi(z^k, \xi_k) - s^{k-1}_x\|_{\Gamma_1} \\
    &{}= (1-\alpha_{k})\|x^k - x^{k-1}\|_{\Gamma_1^{-1}} + (1-\alpha_{k})L_{\widehat{xz}} \| z^k - z^{k-1}\|_{D_{\widehat{xz}}} \\
    &{} \qquad+ \alpha_{k}\|s^k_x - s^{k-1}_x\|_{\Gamma_1^{-1}} + \alpha_{k} (1-\alpha_{k})\|\Gamma_1^{-1}x^{k-1} - \nabla_x \hat \phi(z^{k-1}, \xi_k) - s^{k-1}_x\|_{\Gamma_1},
    \end{align*}
    where the last equality uses $\|a-b\|^2 = \|a\|^2 + \|b\|^2 - 2\langle a,b\rangle$ and unbiasedness from \Cref{ass:AsymPrecon:stoch:unbiased} to conclude that the inner product is zero.

    Hence, by subtracting $\alpha_{k}\|s^k_x - s^{k-1}_x\|_{\Gamma_1^{-1}}$ and diving by $1-\alpha_k$, we get
    \begin{align*}
    \mathbb E[\|s^k_x - s^{k-1}_x\|^2_{\Gamma_1^{-1}}\;|\; \F_k] 
        &{}\leq 2(1+\hat{c}_3)\|x^k - x^{k-1}\|^2_{\Gamma_1^{-1}} + 2 \alpha_{k}^2 \mathbb E[\|\Gamma^{-1}x^{k-1} - \nabla_x \hat \phi(z^{k-1}, \xi_k) - s^{k-1}_x\|_{\Gamma_1}^2 \;|\; \F_k] \\
        \dueto{\Cref{ass:AsymPrecon:stoch:unbiased,ass:PD:stoch:boundedvar}}&{}\leq 2(1+\hat{c}_3)\|x^k - x^{k-1}\|^2_{\Gamma_1^{-1}} + 2 \alpha_{k}^2 \mathbb E[\|\Gamma^{-1}x^{k-1} - \nabla_x \phi(z^{k-1}) - s^{k-1}_x\|^2_{\Gamma_1} \;|\; \F_k] +  2 \alpha_{k}^2 \sigma^2_F \\
        &{}\leq 2(1+\hat{c}_3)\|z^k - z^{k-1}\|^2_{\Gamma^{-1}} + 2 \alpha_{k}^2 \mathbb E[\|S_{z^{k-1}}(z^{k-1};\z^{k-1})-s^{k-1}\|^2_{\Gamma} \;|\; \F_k] +  2 \alpha_{k}^2 \sigma^2_F
    \end{align*}
    where $\hat{c}_3 \coloneqq L^2_{\widehat{xz}}\|\Gamma D_{\widehat{xz}}\|$ and the last inequality reintroduces the $y$-components.

  We finally obtain
 \begin{equation}
 \label{eq:AFBA:zbar}
  \begin{split}
  \mathbb E[ \|\bar x^k - \bar x^{k-1}\|^2_{\Gamma^{-1}}  \;|\; \F_k]
  &{}\leq 2(1+\hat{c}_3)\|z^k-z^{k-1}\|^2_{\Gamma^{-1}}
    + 2\alpha_{k}^2 \mathbb E[  \|s^{k-1} - S_{z^{k-1}}(z^{k-1};\z^{k-1})\|^2_\Gamma \;|\; \F_k]
    + 2\alpha_{k}^2 \sigma_F.
  \end{split}
  \end{equation}
  Introducing \eqref{eq:AFBA:zbar} into \eqref{eq:AFBA:approx_Hzk:2} yields
  \begin{equation}
  \label{eq:AFBA:approx_Hzk:3}
  \begin{split}
  \mathbb E[\|s^{k} - S_{z^k}(z^k;\z^k) \|^2_\Gamma \;|\; \mathcal F_k]
	&\leq (1-\alpha_{k})^2(1+4\hat{c}_2\alpha_{k}^2) \|s^{k-1} - S_{z^{k-1}}(z^{k-1};\z^{k-1})\|^2_\Gamma \\
  &\qquad + 2(1-\alpha_{k})^2 (\hat{c}_1 + 2\hat{c}_2(1+\hat{c}_3)) \|z^k - z^{k-1}\|^2_{\Gamma^{-1}} \\
  &\qquad + 2\alpha_{k}^2(\Theta+(1-\alpha_{k})^22\hat{c}_2) \sigma_F^2.
  \end{split}
  \end{equation}

	We continue with the inner term in \eqref{eq:AFBA:from_kp1_to_k_2} under conditional expectation. 
	\begin{align*}
	-\mathbb E[&\langle s^k - \hat{\bar S}_k,  z^k - z^\star\rangle_\Gamma \;|\; \mathcal F_k] \\
    &= -\langle s^k - S_{z^k}(\z^k), z^k - z^\star\rangle \\
	& = -\langle s^k - S_{z^k}(\z^k), z^k - \bar z^k\rangle - \langle s^k - S_{z^k}(\z^k), \bar z^k - z^\star\rangle \\
	& = -\langle s^k - S_{z^k}(z^k;\z^k), z^k - \bar z^k\rangle -  \langle S_{z^k}(z^k;\z^k) - S_{z^k}(\z^k), z^k - \bar z^k\rangle - \langle s^k - S_{z^k}(\z^k), \bar z^k - z^\star\rangle \\
	& = -\langle s^k - S_{z^k}(z^k;\z^k), z^k - \bar z^k\rangle -  \langle H_{z^k}(z^k) - H_{z^k}(\bar z^k), z^k - \bar z^k\rangle - \langle s^k - S_{z^k}(\z^k), \bar z^k - z^\star\rangle 
	\end{align*}
where the last equality uses that $S_{z^k}(z^k;\z^k) - S_{z^k}(\z^k)=H_{z^k}(z^k) - H_{z^k}(\bar z^k)$.

By definition of $\bar z^k$ in \eqref{eq:AFBA:bar}, we have $s^k = h^k - \SQC[z^k][\z^k][\xi_k'] \in \Gamma^{-1}\z^k + A(\bar z^k)$, so that $s^k - S_{z^k}(\z^k) \in F(\bar z^k) + A(\bar z^k)$. Hence, using the weak MVI from \Cref{ass:AsymPrecon:Minty:Struct},
\begin{equation}\label{eq:AsymPrecon:minty}
\langle s^k - S_{z^k}(\z^k), \bar z^k - z^\star\rangle
\geq \rho \|s^k - S_{z^k}(\z^k)\|^2 \; .
\end{equation}
Using also cocoercivity of $\HC[u]$ from \Cref{lm:H:cocoercive}, this leads to the following inequality, true for any $\varepsilon_k>0$:
\begin{align*}
-\mathbb E[\langle s^k - \hat{\bar S}_k,  z^k - z^\star\rangle \;|\; \mathcal F_k] 
&{}\leq \tfrac{\varepsilon_k}{2} \|s^k - S_{z^k}(z^k;\z^k)\|^2_\Gamma 
+ \tfrac 1{2\varepsilon_k} \|\bar z^k - z^k\|^2_{\Gamma^{-1}} \\
&\qquad - \tfrac 12 \|H_{z^k}(z^k)-H_{z^k}(\bar z^k)\|^2_\Gamma 
- \rho \|s^k - S_{z^k}(\z^k)\|^2\;.
\end{align*}
To majorize the term $\|\bar z^k - z^k\|^2_{\Gamma^{-1}}$, we may use \Cref{lm:H:strmonotone}
for which we need to determind $L_M$.
\rbl{
For the particular choice of $\QC[u]$, we have through \Cref{ass:PD:Lip:phi} that
\begin{equation}
\|\MC[u][z']-\MC[u][z]\|^2_{\Gamma} \leq L_M^2 \|z'-z\|^2_{\Gamma^{-1}}
\end{equation}
with $L_M^2 \coloneqq \max\big\{L_{xx}^2\|D_{xx}\Gamma_1\|+L_{yx}^2\|D_{yx}\Gamma_1\|, \|L_{xy}^2\|D_{xy}\Gamma_2\|+L_{yy}^2\|D_{yy}\Gamma_2\|\big\}$.
By the stepsize choice \Cref{ass:PD:stepsize}, $L_M < 1$, which will be important promptly.
}

\rbl{
From \Cref{lm:H:strmonotone} it then follows that
\begin{align*}
\|H_{z^k}(z^k) - H_{z^k}(\z^k)\|^2_\Gamma \geq (1- L_M)^2\|z^k - \z^k\|^2_{\Gamma^{-1}} \;.
\end{align*}
Hence, given $L_M < 1$, 
}
\begin{align*}
-\mathbb E[\langle s^k &- \hat{\bar S}_k,  z^k - z^\star\rangle_\Gamma \;|\; \mathcal F_k]  \\
&\leq \tfrac{\varepsilon_k}{2}  \|s^k - S_{z^k}(z^k;\z^k)\|^2_\Gamma
   + \Big(\tfrac {1}{2\varepsilon_k (1-L_M)^2} - \tfrac 12\Big) \|H_{z^k}(z^k)-H_{z^k}(\bar z^k)\|^2_\Gamma 
   - \rho \|s^k - S_{z^k}(\z^k)\|^2 \\
&= \tfrac{\varepsilon_k}{2}  \|s^k - S_{z^k}(z^k;\z^k)\|^2_\Gamma
   + \Big(\tfrac {1}{2\varepsilon_k (1-L_M)^2} - \tfrac 12\Big) \|S_{z^k}(z^k;\z^k) - S_{z^k}(\z^k)\|^2_\Gamma 
   - \rho \|s^k - S_{z^k}(\z^k)\|^2.
\numberthis\label{eq:AFBA:second_term}
\end{align*}
The conditional expectation of the third term in \eqref{eq:AFBA:from_kp1_to_k_2} is bounded by
\begin{align*}
\alpha_k^2 \mathbb E[  \|s^k - \hat{\bar S}_k\|^2_\Gamma \; |\; \mathcal F_k] 
  &{}= 
    \alpha_k^2\|s^k - S_{z^k}(\bar z^k)\|^2_\Gamma 
    + \alpha_k^2 \mathbb E[  \|
      F(\bar z^k) - \hat F(\bar z^k,\bar \xi_k)
    \|^2_\Gamma \; |\; \mathcal F_k] \\
  &{}\leq 
    \alpha_k^2\|s^k - S_{z^k}(\bar z^k)\|^2_\Gamma + \alpha_k^2 \sigma_F^2
\numberthis\label{eq:AFBA:third_term}
\end{align*}
where we have used \Cref{ass:PD:stoch:boundedvar}.

Combined with the update rule, \eqref{eq:AFBA:third_term} can also be used to bound the conditional expectation of the difference of iterates
\begin{equation}\label{eq:AFBA:iterate_diff}
\mathbb E[\|z^{k+1} - z^k\|^2_{\Gamma^{-1}} \; |\; \mathcal F_k]
= \mathbb E[\alpha_k^2\|s^k - \hat{\bar S}_k\|^2_\Gamma \; |\; \mathcal F_k]
\leq \alpha_k^2\|s^k - S_{z^k}(\z^k)\|^2_\Gamma + \alpha_k^2\sigma_F^2
\end{equation}
Using \eqref{eq:AFBA:from_kp1_to_k_2}, \eqref{eq:AFBA:second_term}, \eqref{eq:AFBA:third_term}, \eqref{eq:AFBA:iterate_diff} and that $-\rho\|s^k - S_{z^k}(\bar z^k)\|^2 \leq -\frac{\rho}{\bar \gamma}\|s^k - S_{z^k}(\bar z^k)\|^2_\Gamma$ with $\bar\gamma$ denoting the smallest eigenvalue of $\Gamma$ we have,
\begin{align*}
\mathbb E[\U_{k+1} \; |\; \mathcal F_k] 
& \leq 
    \|z^k - z^\star\|^2_{\Gamma^{-1}} + (A_{k+1} + \alpha_k \varepsilon_k)  \|s^k - S_{z^k}(z^k;\z^k)\|^2_{\Gamma} - \alpha_k \Big(1 - \tfrac {1}{\varepsilon_k (1-L_M)^2}\Big) \|S_{z^k}(z^k;\z^k) - S_{z^k}(\z^k)\|^2_{\Gamma} \\
    & \qquad + \alpha_k(\alpha_k - \tfrac{2\rho}{\bar \gamma} + \alpha_k B_{k+1}) \|s^k - S_{z^k}(\bar z^k)\|^2_\Gamma + \alpha_k^2(1+B_{k+1})\sigma_F^2 \\
& \leq 
    \|z^k - z^\star\|^2_{\Gamma^{-1}} 
    + \big(A_{k+1} + \alpha_k (\varepsilon_k + \tfrac 1b(1 - \tfrac {1}{\varepsilon_k (1-L_M)^2}))\big)  \|s^k - S_{z^k}(z^k;\z^k)\|^2_{\Gamma} \\
    &\qquad + \alpha_k \Big(\alpha_k - \tfrac{2\rho}{\bar \gamma} + \alpha_k B_{k+1} - \tfrac{1}{1+b}(1-\tfrac {1}{\varepsilon_k (1-L_M)^2})\Big) \|H_{z^k}(z^k)-H_{z^k}(\bar z^k)\|^2_{\Gamma} \\
    & \qquad + \alpha_k^2(1+B_{k+1})\sigma_F^2,
\numberthis
\end{align*}
where the last inequality follows from Young's inequality with positive $b$ as long as $1 - \tfrac {1}{\varepsilon_k (1-L_M)^2} \geq 0$.

By defining
\begin{equation}
\begin{split}
X_k^1 & \coloneqq A_{k+1} + \alpha_k (\varepsilon_k + \tfrac 1b(1 - \tfrac {1}{\varepsilon_k (1-L_M)^2})) \\
X_k^2 & \coloneqq \tfrac{2\rho}{\bar \gamma} - \alpha_k - \alpha_k B_{k+1} + \tfrac{1}{1+b}(1 - \tfrac {1}{\varepsilon_k (1-L_M)^2})
\end{split}
\end{equation}
and applying \eqref{eq:AFBA:approx_Hzk:3}, we finally obtain 
\begin{equation}
\begin{split}
\label{eq:AFBA:descent}
\mathbb E[\U_{k+1} \; |\; \mathcal F_k]  - \U_{k}
& \leq 
    -\alpha_k X_k^2 \|s^k - S_{z^k}(\z^k)\|^2_{\Gamma} \\
    & \qquad + (X_k^1(1-\alpha_{k})^2(1+4\hat{c}_2\alpha_{k}^2) - A_k) \|s^{k-1} - S_{z^{k-1}}(z^{k-1};\z^{k-1})\|^2_\Gamma  \\
    & \qquad + (2X_k^1(1-\alpha_{k})^2(\hat{c}_1 + 2\hat{c}_2(1+\hat{c}_3)) - B_k) \|z^k - z^{k-1}\|^2_{\Gamma^{-1}}  \\
    & \qquad + 2X_k^1\alpha_{k}^2(\Theta+(1-\alpha_{k})^22\hat{c}_2) \sigma_F^2 + \alpha_k^2(1+B_{k+1})\sigma_F^2,
\end{split}
\end{equation}
If $A_k \geq X_k^1(1-\alpha_{k})^2(1+4\hat{c}_2\alpha_{k}^2)$,
then it suffice to pick $B_k$ as
\begin{equation}
2X_k^1(1-\alpha_{k})^2(\hat{c}_1 + 2\hat{c}_2(1+\hat{c}_3))
  = \tfrac{2(\hat{c}_1 + 2\hat{c}_2(1+\hat{c}_3))A_k}{1+4\hat{c}_2\alpha_{k}^2}
  \leq 2(\hat{c}_1 + 2\hat{c}_2(1+\hat{c}_3))A_k =: B_k.
\end{equation}
To get a recursion, we then only require the following conditions
\begin{equation}
\label{eq:AFBA:recursion}
X_k^1(1- \alpha_{k})^2(1+4\hat{c}_2\alpha_{k}^2) \leq A_k 
\quad \text{and} \quad
X_k^2 > 0.
\end{equation}
Set $A_k=A$, $\varepsilon_k = \varepsilon$.
For the first inequality of \eqref{eq:AFBA:recursion}, since $\left(1-\alpha_{k}\right)^2 \leq\left(1-\alpha_{k}\right)$, 
the terms involving $A$ are bounded as
\begin{align*}
(1-\alpha_{k})^2&(1+4\hat{c}_2\alpha_k^2)A - A\\
&\leq (1-\alpha_{k})(1+4\hat{c}_2\alpha_k^2)A - A\\
&= -\alpha_k A + (1-\alpha_{k})(4\hat{c}_2\alpha_k^2)A \\
&\leq -\alpha_k\left(1 - 4\hat{c}_2\alpha_0\right)A
\numberthis
\end{align*}
where the last inequality follows from $\left(1-\alpha_{k}\right) \leq 1$ and $\alpha_k \leq \alpha_0$.
Thus to satisfy the first inequality of \eqref{eq:AFBA:recursion} it suffice to pick
\begin{equation}
A \geq \frac{(1+4\hat{c}_2\alpha_0^2)(\varepsilon + \tfrac 1b(1 - \tfrac {1}{\varepsilon (1-L_M)^2}))}{1 - 4\hat{c}_2\alpha_0}
\end{equation}
where $1 - 4\hat{c}_2\alpha_0 > 0$ is required. %

The second equality of \eqref{eq:AFBA:recursion} is satisfied owing to \eqref{app:thm:AsymPrecon:convergence:conditions}.

The noise term in \eqref{eq:AFBA:descent} can be made independent of $k$ by using $\alpha_k \leq \alpha_0$,
\begin{align*}
2X_k^1&(1+(1-\alpha_k)^22\hat{c}_2) + 1+B_{k+1} \\
  &= 2(A + \alpha_k (\varepsilon + \tfrac 1b(1 - \tfrac {1}{\varepsilon (1-L_M)^2})))(\Theta+(1-\alpha_k)^22\hat{c}_2)
    + 1 + 2(\hat{c}_1 + 2\hat{c}_2(1+\hat{c}_3))A \\
  &\leq 2(A + \alpha_0 (\varepsilon + \tfrac 1b(1 - \tfrac {1}{\varepsilon (1-L_M)^2})))(\Theta+2\hat{c}_2)
    + 1 + 2(\hat{c}_1 + 2\hat{c}_2(1+\hat{c}_3))A
    =: C.
\numberthis
\end{align*}

Thus, it follows from \eqref{eq:AFBA:descent} that
\begin{equation}
\begin{split}
\mathbb E[\U_{k+1} &\; |\; \mathcal F_k]  - \U_{k} \\
 &{}\leq 
    \alpha_k \Big(\alpha_0 - \tfrac{2\rho}{\bar \gamma} + 2\alpha_0 (\hat{c}_1 + 2\hat{c}_2(1+\hat{c}_3))A - \tfrac{1}{1+b}(1 - \tfrac {1}{\varepsilon (1-L_M)^2})\Big) \|s^k - S_{z^k}(\z^k)\|^2_{\Gamma} \\
    & \qquad + \alpha_k^2C\sigma_F^2.
\end{split}
\end{equation}
The result is obtained by total expectation and summing the above inequality while noting that the initial iterate were set as \(z^{-1}= z^0\). 
\end{appendixproof}

\begin{appendixproof}{thm:AsymPrecon:PDHG:convergence}
The theorem is a specialization of \Cref{app:thm:AsymPrecon:convergence} for a particular a choice of $b$ and $\varepsilon$.
The third requirement of \eqref{app:thm:AsymPrecon:convergence:conditions} can be rewritten as,
\begin{equation}
\varepsilon \geq \tfrac{1}{(1-L_M)^2},
\end{equation}
which is satisfied by $\varepsilon = \tfrac{1}{\sqrt{\alpha_0}(1-L_M)^2}$.
We substitute in the choice of $\varepsilon$, $b=\sqrt{\alpha_0}$ and denotes $\eta \coloneqq A$.

The weighted sum in \eqref{app:thm:AsymPrecon:rate} is equivalent to an expectation over a sampled iterate in the style of \citet{ghadimi2013stochastic},
\begin{align*}
    \mathbb E[\|\Gamma^{-1}\hat z^{k_\star} - S_{z^{k_\star}}(\z^{k_\star};\z^{k_\star})\|^{2}_\Gamma]
        {}={}
     \sum_{k=0}^K \tfrac{\alpha_k}{\sum_{j=0}^K \alpha_{j}} \mathbb E[\|\Gamma^{-1}\hat z^{k} - S_{z^k}(\z^k;\z^k)\|^{2}_\Gamma].
\end{align*}
with $k_{\star}$ chosen from $\{0,1, \ldots, K\}$ according to probability $\mathcal{P}\left[k_{\star}=k\right]=\frac{\alpha_k}{\sum_{j=0}^K \alpha_j}$.

\rbl{
Noticing that $\Gamma^{-1}\hat z^{k_\star} - S_{z^{k_\star}}(\z^{k_\star};\z^{k_\star}) \in F\z^{k_\star} + A\z^{k_\star} =  T\z^{k_\star}$ so
\begin{align*}
    \mathbb E[\|\Gamma^{-1}\hat z^{k_\star} - S_{z^{k_\star}}(\z^{k_\star};\z^{k_\star})\|^{2}_\Gamma] 
        \geq \min_{u \in T\z^{k_\star}} \mathbb E[\|u\|_\Gamma^2] 
        \geq \mathbb E[\min_{u \in T\z^{k_\star}} \|u\|_\Gamma^2]
        =: \mathbb E[\dist_\Gamma(0,T\z^{k_\star})^2]
\end{align*}
where the second inequality follows from concavity of the minimum.
}%
This completes the proof.

\end{appendixproof}

    \rbl{
\begin{thm}[almost sure convergence]\label{app:thm:AFBA:almostsure}
    Suppose that \cref{ass:AsymPrecon:Minty:Struct} to \ref{ass:AsymPrecon:stoch:unbiased} and \ref{ass:PD} hold.
    Moreover, suppose that \(\alpha_k\in(0,1)\), \(\theta\in[0,\infty)\) and the following holds for positive parameter $b$,
    \begin{gather}
    \eta_{k} 
        {}\coloneqq{}
    \textstyle
    \sum_{\ell=k}^{\infty}
        \left(%
            c_{l}\alpha_{l}\Pi_{p=0}^{\ell}(1-\alpha_{p})^{2}\tau_p
        \right)
        < \infty, 
    \quad
    \nu \coloneqq \sum_{k=0}^\infty \alpha_{k}^2\eta_{k+1} \Pi_{p=0}^{k}\frac{1}{(1-\alpha_{p})^{2}\tau_p} < \infty \quad \text{and}\label{app:thm:AsymPrecon:almostsure:conditions1}\\
    \tfrac{2\rho}{\bar \gamma} - \alpha_k - \alpha_k 2(\hat{c}_1 + 2\hat{c}_2(1+\hat{c}_3))\eta_{k+1} \Pi_{p=0}^{k}\frac{1}{(1-\alpha_{p})^{2}\tau_p} + \tfrac{1}{1+b}(1 - \alpha_k^d) > 0 \label{app:thm:AsymPrecon:almostsure:conditions2}
    \end{gather}
    where \(\bar \gamma\) denotes the smallest eigenvalue of \(\Gamma\), $d\in [0,1]$ and
    \begin{gather*}
    \tau_k = 1+4\hat{c}_2\alpha_k^2, 
    \quad
    c_k = \tfrac{1}{\alpha_k^d(1-L_M)^2} + \tfrac 1b(1 - \alpha_k^d), \\
    \hat{c}_1 \coloneqq L^2_{\widehat{xz}}\|\Gamma D_{\widehat{xz}}\|+2(1-\theta)^2L^2_{\widehat{yz}}\|\Gamma D_{\widehat{yz}}\|+2\theta^2L^2_{\widehat{yy}}\|\Gamma_2D_{\widehat{yy}}\|, 
    \quad
    \hat{c}_2 \coloneqq 2\theta^2L^2_{\widehat{yx}}\|\Gamma_1 D_{\widehat{yx}}\|, 
    \quad
    \hat{c}_3 \coloneqq L^2_{\widehat{xz}}\|\Gamma D_{\widehat{xz}}\|, \\
    L_M^2 \coloneqq \max\big\{L_{xx}^2\|D_{xx}\Gamma_1\|+L_{yx}^2\|D_{yx}\Gamma_1\|, \|L_{xy}^2\|D_{xy}\Gamma_2\|+L_{yy}^2\|D_{yy}\Gamma_2\|\big\}.
    \end{gather*}
    Then, the sequence \(\seq{z^k}\) generated by \Cref{alg:NP-PDHG} converges almost surely to some \(z^\star\in \zer T\).
    Furthermore, if $\alpha_k=\tfrac{1}{k+r}$ for some positive natural number $r$, then \eqref{app:thm:AsymPrecon:almostsure:conditions1} is satisfied and it suffice to assume
    \begin{equation}\label{app:thm:AsymPrecon:almostsure:conditions:linear}
    \tfrac{2\rho}{\bar \gamma} - \alpha_k - 2(\hat{c}_1 + 2\hat{c}_2(1+\hat{c}_3))\left(\tfrac 1b (1-\alpha_k)\alpha_{k+1}+\tfrac{1}{(1-L_M)^2}\right)\left(\alpha_{k+1}+1\right)\alpha_{k+1} + \tfrac{1}{1+b}(1 - \alpha_k) < 0.
    \end{equation}
\end{thm}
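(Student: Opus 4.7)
The plan is to follow the template established for the smooth unconstrained case in \Cref{app:thm:BiasCorr}, but now starting from the stochastic descent inequality \eqref{eq:AFBA:descent} obtained during the proof of \Cref{app:thm:AsymPrecon:convergence}. In that inequality, I would no longer fix $A_k=A$, $B_k$ constant, $\varepsilon_k=\varepsilon$, but instead allow them to vary with $k$ so as to obtain a genuine quasi-descent relation suitable for the Robbins-Siegmund supermartingale theorem.

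First I would pick the Young parameter as $\varepsilon_k = \frac{1}{\alpha_k^d(1-L_M)^2}$ for $d\in[0,1]$, which makes $\varepsilon_k + \tfrac{1}{b}(1-\tfrac{1}{\varepsilon_k(1-L_M)^2}) = c_k$ exactly and also reduces the factor $\tfrac{1}{1+b}(1-\tfrac{1}{\varepsilon_k(1-L_M)^2})$ to $\tfrac{1}{1+b}(1-\alpha_k^d)$ as in \eqref{app:thm:AsymPrecon:almostsure:conditions2}. Next I choose $B_k = 2A_k(\hat{c}_1 + 2\hat{c}_2(1+\hat{c}_3))$, so the coefficient of $\|z^k-z^{k-1}\|^2_{\Gamma^{-1}}$ in \eqref{eq:AFBA:descent} becomes $2(\hat c_1+2\hat c_2(1+\hat c_3))\left(X_k^1(1-\alpha_k)^2-A_k\right)$. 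Imposing $A_k = X_k^1(1-\alpha_k)^2\tau_k$ with $\tau_k=1+4\hat c_2\alpha_k^2$ yields a linear recursion in $A_k$, which by \Cref{lem:recur} admits the closed form $A_{k+1} = \eta_{k+1}\,\Pi_{p=0}^{k}\tfrac{1}{(1-\alpha_p)^2\tau_p}$. The finiteness requirement on $\eta_0$ in \eqref{app:thm:AsymPrecon:almostsure:conditions1} guarantees $A_k\geq 0$ for all $k$.

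With these choices, \eqref{eq:AFBA:descent} collapses to the quasi-descent form
\begin{align*}
\E[]{\U_{k+1}\mid\F_k}-\U_k \leq -\alpha_k X_k^2 \|s^k - S_{z^k}(\z^k)\|^2_\Gamma + \xi_k,
\end{align*}
where $X_k^2>0$ by \eqref{app:thm:AsymPrecon:almostsure:conditions2} and the noise term $\xi_k$ collects the $\sigma_F^2$ contributions. Using $A_{k+1}\alpha_k^2 \leq \nu_k$ summable (by the second condition in \eqref{app:thm:AsymPrecon:almostsure:conditions1}) together with $\sum_k\alpha_k^2<\infty$ and $d\leq 1$, one shows $\sum_k \xi_k < \infty$ analogously to the telescoping estimate in the proof of \Cref{app:thm:BiasCorr}. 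The Robbins-Siegmund supermartingale theorem \cite[Prop. 2]{Bertsekas2011Incremental} then yields almost sure convergence of $\U_k$ (hence of $\|z^k-z^\star\|_{\Gamma^{-1}}^2$ for every $z^\star\in\mathcal S^\star$) and summability of $\alpha_k\|s^k-S_{z^k}(\z^k)\|^2_\Gamma$. Since $\sum_k\alpha_k=\infty$, this forces $\liminf_k \|s^k-S_{z^k}(\z^k)\|^2_\Gamma=0$; combined with the fact that $s^k-S_{z^k}(\z^k)\in F\z^k+A\z^k = T\z^k$ and boundedness of the iterates, a standard Opial-type argument as in \cite[Prop. 9]{Bertsekas2011Incremental} identifies the unique cluster point as an element of $\zer T$.

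Finally, for the concrete choice $\alpha_k=\tfrac{1}{k+r}$ with $d=1$, the products $\Pi_{p=0}^{\ell}(1-\alpha_p)^2 = \tfrac{(r-1)^2}{(\ell+r)^2}$ and the integral bound \eqref{eq:intHarmonic} reduce $A_{k+1}\alpha_k$ to the explicit form that yields \eqref{app:thm:AsymPrecon:almostsure:conditions:linear} — exactly paralleling the computation \eqref{eq:Aalpha} in the smooth case, with the extra factor $\tau_p$ only contributing a convergent correction since $\sum_k \alpha_k^2<\infty$. The main technical obstacle I foresee is controlling the interplay between the bias-correction term and the nonlinear asymmetric preconditioner $\SQC[u][z][\xi]$ when verifying summability of $\xi_k$; specifically, the term involving $\|\bar x^k-\bar x^{k-1}\|^2_{\Gamma_1^{-1}}$ in \eqref{eq:AFBA:approx_Hzk:3} must not destroy the recursion, which is why the factor $\tau_k=1+4\hat c_2\alpha_k^2$ appears and must be absorbed carefully.
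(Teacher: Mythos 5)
Your proposal is correct and follows essentially the same route as the paper's proof: it starts from the descent inequality \eqref{eq:AFBA:descent} with iteration-dependent parameters, picks \(\varepsilon_k=\tfrac{1}{\alpha_k^d(1-L_M)^2}\) and \(B_k=2(\hat c_1+2\hat c_2(1+\hat c_3))A_k\), solves the linear recursion for \(A_k\) to equality via \Cref{lem:recur} to get \(A_{k+1}=\eta_{k+1}\Pi_{p=0}^k\tfrac{1}{(1-\alpha_p)^2\tau_p}\), invokes Robbins--Siegmund using \(\nu<\infty\) for noise summability, and specializes to \(\alpha_k=\tfrac{1}{k+r}\), \(d=1\) exactly as in the paper (which absorbs the factor \(\tau_p\) through the bounded ratio \(\omega_\ell/\omega_k\le 1\)).
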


\begin{appendixproof}{app:thm:AFBA:almostsure}
We continue from the conditions in \eqref{eq:AFBA:recursion} which we restate here for convenience:
\begin{equation}
\label{eq:AFBA:recursion2}
X_k^1(1- \alpha_{k})^2(1+4\hat{c}_2\alpha_{k}^2) \leq A_k 
\quad \text{and} \quad
X_k^2 > 0,
\end{equation}
where 
\begin{equation}
\begin{split}
X_k^1 & \coloneqq A_{k+1} + \alpha_k (\varepsilon_k + \tfrac 1b(1 - \tfrac {1}{\varepsilon_k (1-L_M)^2})) \\
X_k^2 & \coloneqq \tfrac{2\rho}{\bar \gamma} - \alpha_k - \alpha_k 2(\hat{c}_1 + 2\hat{c}_2(1+\hat{c}_3))A_{k+1} + \tfrac{1}{1+b}(1 - \tfrac {1}{\varepsilon_k (1-L_M)^2}).
\end{split}
\end{equation}
Under \eqref{eq:AFBA:recursion2} the descent inequality \eqref{eq:AFBA:descent} reduces to
\begin{equation}
\label{eq:AFBA:descent2}
\mathbb E[\U_{k+1} \; |\; \mathcal F_k]  - \U_{k}
\leq -\alpha_k X_k^2 \|s^k - S_{z^k}(\z^k)\|^2_{\Gamma} + \alpha_{k}^2\zeta_k,
\end{equation}
where $\zeta_k = \big(2X_k^1(\Theta+(1-\alpha_{k})^22\hat{c}_2) + (1+2(\hat{c}_1 + 2\hat{c}_2(1+\hat{c}_3))A_{k+1})\big)\sigma_F^2$.

To show almost sure convergence through the Robbins-Siegmund supermartingale theorem \cite[Prop. 2]{Bertsekas2011Incremental}, we need $0 \leq A_k < \infty$ (inside $\U_k$), $X_k^2$ and $\zeta_k$ to be nonnegative and furthermore $\sum_{k=0}^\infty \alpha_{k}^2\zeta_k < \infty$.
To make a concrete choice of $A_k$ we solve the first (linear) inequality of \eqref{eq:AFBA:recursion2} to equality with \Cref{lem:recur}.
Letting $\tau_k \coloneqq 1+4\hat{c}_2\alpha_k^2$ and $c_k \coloneqq \varepsilon_k + \tfrac 1b(1 - \tfrac {1}{\varepsilon_k (1-L_M)^2})$ we have
\begin{align*}
    A_{k+1} 
        {}={}
    \left(%
        \Pi_{p=0}^{k}\frac{1}{(1-\alpha_{p})^{2}\tau_p}
    \right)
        \left(
            A_{0}-\sum_{\ell=0}^{k}
            \left(%
                c_{l}\alpha_{l}\Pi_{p=0}^{\ell}(1-\alpha_{p})^{2}\tau_p
            \right)
        \right)
        {}={}
    \eta_{k+1} \Pi_{p=0}^{k}\frac{1}{(1-\alpha_{p})^{2}\tau_p} 
    \numberthis\label{eq:AFBA:almostsure:Akrec}
\end{align*}
where the last equality follows from picking
\begin{align*}
    A_0
    {}\coloneqq {}
    \sum_{\ell=0}^{\infty}
        \left(%
            c_{l}\alpha_{l}\Pi_{p=0}^{\ell}(1-\alpha_{p})^{2}\tau_p
        \right) 
    \quad \text{and} \quad
    \eta_{k} 
        {}\coloneqq{}
    \textstyle
    \sum_{\ell=k}^{\infty}
        \left(%
            c_{l}\alpha_{l}\Pi_{p=0}^{\ell}(1-\alpha_{p})^{2}\tau_p
        \right)
        \overrel[<]{\eqref{app:thm:AsymPrecon:almostsure:conditions1}}\infty.
\end{align*}
This choice ensures \(A_k\geq 0\) for all \(k\) and consequently $\zeta_k \geq 0$ and $X_k^2 \geq 0$ as long as $(1-\tfrac{1}{\varepsilon_k (1-L_M)^2}) > 0$.
It remains to show boundedness of the cumulative noise terms.
\begin{align*}
\sum_{k=0}^\infty \alpha_{k}^2\zeta_k
    &= 2\sigma_F^2\sum_{k=0}^\infty (\Theta+(1-\alpha_{k})^22\hat{c}_2) \alpha_{k}^2 A_{k+1}
     + \sigma_F^2(1+2(\hat{c}_1 + 2\hat{c}_2(1+\hat{c}_3)))\sum_{k=0}^\infty \alpha_{k}^2 A_{k+1} \\
    &\quad + \sum_{k=0}^\infty \alpha_{k}^3 \sigma_F^2(\varepsilon_k + \tfrac 1b(1 - \tfrac {1}{\varepsilon_k (1-L_M)^2}))(\Theta+(1-\alpha_{k})^22\hat{c}_2) \\
    &\leq 2\sigma_F^2(\Theta+2\hat{c}_2)\sum_{k=0}^\infty \alpha_{k}^2 A_{k+1}
     + \sigma_F^2(1+2(\hat{c}_1 + 2\hat{c}_2(1+\hat{c}_3)))\sum_{k=0}^\infty \alpha_{k}^2 A_{k+1} \\
    &\quad 
        + \sigma_F^2\tfrac{1}{(1-L_M)^2}(\Theta+2\hat{c}_2)\sum_{k=0}^\infty \alpha_{k}^{3-d} 
        + \sigma_F^2\tfrac 1b(\Theta+2\hat{c}_2)\sum_{k=0}^\infty \alpha_{k}^3 (1 - \alpha_k^{d})
\numberthis \label{eq:AFBA:as:boundednoise}
\end{align*}
where the last inequality follows from $(1-\alpha_k)^2 \leq 1$ and picking $\varepsilon_k = \tfrac{1}{\alpha_k^d(1-L_M)^2}$ with $d \in [0,1]$ to ensure $1-\tfrac{1}{\varepsilon_k (1-L_M)^2} > 0$.
Assuming $\nu \coloneqq \sum_{k=0}^\infty \alpha_{k}^2A_{k+1} < \infty$ as in \eqref{app:thm:AsymPrecon:almostsure:conditions1} is sufficient to show that \eqref{eq:AFBA:as:boundednoise} is bounded.
This finishes the proof for the first claim of \Cref{app:thm:AFBA:almostsure}. 

To provide an instance of the sequence \(\seq{\alpha_k}\) that satisfy the assumptions, let \(r\) denote a positive natural number and set 
\begin{equation} \label{eq:AFBA:almostsure:alphak}
    \alpha_k = \tfrac{1}{k+r}. 
\end{equation}
Then, 
\begin{align*}
    \Pi_{p=0}^{\ell}(1-\alpha_{p})^{2}\tau_p 
        {}={}&
    \Pi_{p=0}^{\ell}(\tfrac{p+r-1}{p+r})^{2}(1+4\hat{c}_2\alpha_p^2)
        {}={}
    \tfrac{(r-1)^2}{(\ell+r)^2}(1+4\hat{c}_2 \Pi_{p=0}^{\ell}\tfrac{1}{(p+r)^2})
        {}={}
    \tfrac{(r-1)^2}{(\ell+r)^2}
    \omega_\ell
\end{align*}
with $\omega_\ell = (1 + r^{2(1-\ell)}4\hat{c}_2) \in [1, 1+4\hat{c}_2]$ owing to the fact that $\Pi_{p=0}^{\ell}\tfrac{1}{p+r} = r^{1-\ell}$.
It follows that, for any \(K \geq 0\) 
\begin{align*}
    \sum_{\ell=0}^{K}\left(c_{\ell}\alpha_{\ell}\Pi_{p=0}^{\ell}(1-\alpha_{p})^{2}\tau_p\right)
        {}={}
    \sum_{\ell=0}^{K}c_{\ell}\tfrac{(r-1)^2}{(\ell+r)^{3}}\omega_\ell.
\end{align*}
Plugging the value of \(c_\ell\) and \(\varepsilon_k\) from \eqref{eq:AFBA:almostsure:Akrec} and \eqref{eq:AFBA:as:boundednoise} we obtain that \(A_0\) is finite valued since $b>0$, $L_M < 1$ and \(\sum_{\ell=0}^{\infty}\tfrac{\varepsilon_{\ell}}{(\ell+r)^3}=\sum_{\ell=0}^{\infty}\tfrac{1}{(\ell+r)^{3-d}(1-L_M)^2}<\infty\) owing to the fact that \(d\leq 1\). 

Moreover, 
\begin{align*}
     A_{k+1} 
        {}={}
        \frac{(k+r)^{2}}{(r-1)^{2}\omega_k}
        \left(
            A_{0}-\sum_{\ell=0}^{k}
            \left(%
                \tfrac{(r-1)^2\omega_\ell}{(\ell+r)^{3}}c_{\ell}
            \right)
        \right)
            {}={}
        (k+r)^{2}
            \sum_{\ell=k+1}^{\infty}
                \tfrac{\omega_\ell}{(\ell+r)^{3}\omega_k}c_{\ell}
            {}={}
        \tfrac{1}{\alpha_k^2}
            \sum_{\ell=k+1}^{\infty}
                \alpha_\ell^3c_{\ell}\tfrac{\omega_\ell}{\omega_k}
            {}\leq{}
        \tfrac{1}{\alpha_k^2}
            \sum_{\ell=k+1}^{\infty}
                \alpha_\ell^3c_{\ell}
        \numberthis\label{eq:AFBA:almostsure:Ak1}
\end{align*}
where the last inequality follows from $\tfrac{\omega_i}{\omega_j} \leq \tfrac{1}{1+4\hat{c}_2} \leq 1$ for any $i,j\in \N$.

On the other hand, for \(e>1\) we have the following bound  %
\begin{align*}
    \sum_{\ell=k+1}^{\infty}\alpha_{\ell}^{e} 
        {}\leq{}
    \tfrac{1}{(k+1+r)^{e}} + \int_{k+1}^{\infty} \tfrac{1}{(x+r)^e}dx 
        {}={}
    \tfrac{1}{(k+1+r)^{e}}+\tfrac{1}{(e-1)(k+1+r)^{e-1}}.
    \numberthis\label{eq:AFBA:almostsure:intHarmonic}
\end{align*}
Therefore, recalling $c_k = \alpha_k^{-d}\tfrac{1}{(1-L_M)^2}+\tfrac 1b (1-\alpha_k^d)$, it follows from \eqref{eq:AFBA:almostsure:Ak1} that  
\begin{align*}
      \alpha_k A_{k+1} 
        {}\leq{}&
    \tfrac{1}{\alpha_k}\sum_{\ell=k+1}^{\infty} \big(%
        \alpha_k^2\tfrac 1b (1-\alpha_k^d) + \alpha_k^{3-d}\tfrac{1}{(1-L_M)^2}
    \big)
    \\
    \dueto{\eqref{eq:AFBA:almostsure:intHarmonic}}
        {}\leq{}&
    \left(
        \tfrac 1b (1-\alpha_k^d)\tfrac{1}{2(k+1+r)}
        +\tfrac{1}{(1-L_M)^2}\tfrac{1}{(2-d)(k+1+r)^{1-d}}
      \right)\left(\tfrac{1}{k+1+r}+1\right)\tfrac{1}{k+1+r}
    \\
        {}={}&
    \left(\tfrac 1b (1-\alpha_k^d)\alpha_{k+1}+\tfrac{1}{(1-L_M)^2(2-d)}\alpha_{k+1}^{1-d}\right)\left(\alpha_{k+1}+1\right)\alpha_{k+1}.
    \numberthis\label{eq:AFBA:almostsure:Aalpha}
  \end{align*}  
In turn, this inequality ensures that \(\nu\) as defined in \cref{app:thm:AsymPrecon:almostsure:conditions1} is finite. To see this note that 
\begin{align*}
    \textstyle
    \nu 
        {}={}
    \sum_{k=0}^{\infty} A_{k+1}\alpha_k^2 
        {}\overrel[\leq]{\eqref{eq:AFBA:almostsure:Aalpha}}{} 
    \sum_{k=0}^{\infty} \left(\tfrac 1b (1-\alpha_k^d)\alpha_{k+1}+\tfrac{1}{(1-L_M)^2(2-d)}\alpha_{k+1}^{1-d}\right)\left(\alpha_{k+1}+1\right)\alpha_{k+1}\alpha_k
        {}\leq{}
    \delta \sum_{k=0}^{\infty} \alpha_k^2 <\infty.
\end{align*}

It remains to simplify the remaining condition \eqref{app:thm:AsymPrecon:almostsure:conditions2}. 
Using \eqref{eq:AFBA:almostsure:Aalpha}
\begin{align*}
\tfrac{2\rho}{\bar \gamma} &- \alpha_k - \alpha_k 2(\hat{c}_1 + 2\hat{c}_2(1+\hat{c}_3))A_{k+1} + \tfrac{1}{1+b}(1 - \alpha_k^d)  \\
&\leq \tfrac{2\rho}{\bar \gamma} - \alpha_k - 2(\hat{c}_1 + 2\hat{c}_2(1+\hat{c}_3))\left(\tfrac 1b (1-\alpha_k^d)\alpha_{k+1}+\tfrac{1}{(1-L_M)^2(2-d)}\alpha_{k+1}^{1-d}\right)\left(\alpha_{k+1}+1\right)\alpha_{k+1} + \tfrac{1}{1+b}(1 - \alpha_k^d) \\
&= \tfrac{2\rho}{\bar \gamma} - \alpha_k - 2(\hat{c}_1 + 2\hat{c}_2(1+\hat{c}_3))\left(\tfrac 1b (1-\alpha_k)\alpha_{k+1}+\tfrac{1}{(1-L_M)^2}\right)\left(\alpha_{k+1}+1\right)\alpha_{k+1} + \tfrac{1}{1+b}(1 - \alpha_k) \\
&\overrel[<]{\eqref{app:thm:AsymPrecon:almostsure:conditions:linear}} 0
\end{align*}
where the equality follows from choosing \(d=1\).
This completes the proof.
\end{appendixproof}
}
    \subsection{Explanation of bias-correction term}\label{app:bias-correction}
    Consider the naive analysis which would track $h^k$.
By the definition of $\bar z^k$ in \eqref{eq:AFBA:bar} and $\HC[k][\z^k]$ we would have $h^k - \HC[k][\z^k] +  \PC[k][\z^k] - \SPC[k][\z^k][\bar \xi_k] \in F(\bar z^k) + A(\bar z^k)$. 
Hence, assuming zero mean and using the weak MVI from \Cref{ass:AsymPrecon:Minty:Struct}, 
\begin{equation}
\begin{split}
\mathbb E [\langle h^k - H_k(\bar z^k), \bar z^k - z^\star\rangle \;|\; \F_k']
&= \mathbb E [\langle h^k - H_k(\bar z^k) + \PC[k][\z^k] - \SPC[k][\z^k][\xi_k'], \bar z^k - z^\star\rangle \;|\; \F_k'] \\
&\geq \mathbb E [\rho \|h^k - H_k(\bar z^k) + \PC[k][\z^k] - \SPC[k][\z^k][\xi_k']\|^2 \;|\; \F_k'] \; .
\end{split}
\end{equation}
To proceed we could apply Young's inequality, but this would produce a noise term, which would propagate to the descent inequality in \eqref{eq:AFBA:descent} with a $\alpha_k$ factor in front.
To show convergence we would instead need a smaller factor of $\alpha_k^2$.

To avoid this error term entirely we instead do a change of variables with $s^k \coloneqq h^k{}-\SPC[z^k][\bar{z}^k][\xi_k']$ such that,
\begin{equation}
\begin{split}
h^k{}\in{}\SPC[z^k][\bar{z}^k][\xi_k']+A\bar{z}^k 
&{}\Leftrightarrow h^k{}-\SPC[z^k][\bar{z}^k][\xi_k']\in{}A\bar{z}^k \\
&{}\Leftrightarrow s^k\in{}A\bar{z}^k.
\end{split}
\end{equation}
This make application of \Cref{ass:AsymPrecon:Minty:Struct} unproblematic, but affects the choice of the bias-correction term, since the analysis will now apply to $s^k$.
If we instead of the careful choice of $h^k$ in \eqref{eq:AFBA:h} had made the choice
\begin{equation}
h^k = \SPC[z^k][z^k][\xi_k]-\hat{F}(z^k,\xi_k) + (1 - \alpha_{k})(h^{k-1} - \SPC[z^{k-1}][z^{k-1}][\xi_k]+\hat{F}(z^{k-1},\xi_k))
\end{equation}
then 
\begin{equation*}
s^k = \SPC[z^k][z^k][\xi_k]-\hat{F}(z^k,\xi_k)-\SPC[z^k][\bar{z}^k][\xi_k'] + (1 - \alpha_{k})(s^{k-1} - \SPC[z^{k-1}][z^{k-1}][\xi_k]+\hat{F}(z^{k-1},\xi_k)-\SPC[z^{k-1}][\bar{z}^{k-1}][\xi_{k-1}']).
\end{equation*}
Notice how the latter term is evaluated under $\xi_{k-1}'$ instead of $\xi_{k}'$.
The choice in \eqref{eq:AFBA:h} resolves this issue.

\section{Negative weak Minty variational inequality}\label{app:negF}

In this section we consider the problem of finding a zero of the single-valued operator \(F\) (with the set-valued operator \(A\equiv 0\)).  Observe that the weak MVI in \cref{ass:AsymPrecon:Minty:Struct}, 
\(
\langle Fz,z - z^{\star}\rangle \geq \rho\|Fz\|^2, 
\)
for all $z\in \R^n$, 
is not symmetric and one may instead consider that the assumption holds for \(-F\).  As we will see below this simple observation leads to nontrivial problem classes extending the reach of extragradient-type methods both in the deterministic and stochastic settings.  
\begin{ass}[negative weak MVI]
\label{ass:basic:negative} 
     There exists a nonempty set $\mathcal S^{\star}\subseteq \zer T$ such that for all $z^\star\in \mathcal S^{\star}$ and some $\bar\rho\in(-\infty,\nicefrac1{2L})$
    \begin{equation}\label{eq:neg:WMVI}
        \langle Fz,z - z^{\star}\rangle \leq \bar\rho\|Fz\|^2, \quad \text{for all $z\in \R^n$.}
    \end{equation}
\end{ass}
Under this assumption the algorithm of \cite{pethick2022escaping} leads to the following modified iterates: 
\begin{align}\label{eq:iter:constant}
    \bar{z}^k 
        {}={}&
    z^k {+}  \gamma_k Fz^k,
    \\
    z^{k+1}
        {}={}&
    z^k
        {}+{}
    \lambda_k\alpha_k (H_k\z^k - H_kz^k) 
        {}={}
      z^k
        {}{+}{}
    \lambda_k\alpha_k \gamma_k F\z^k 
    ,\quad \text{where}\quad H_k\coloneqq \id {+} \gamma_k F
\end{align}

We next consider the lower bound example of \cite[Ex. 5]{pethick2022escaping} to
show that despite the condition for weak MVI being violated for \(b\) smaller than a certain threshold, the negative weak MVI in \cref{ass:basic:negative} holds for any negative \(b\) and thus the extragradient method applied to \(-F\) is guaranteed to converge. 
\begin{example}
    Consider \cite[Ex. 5]{pethick2022escaping}
    \begin{equation}
\label{eq:lowerbound-example}
\minimize_{x \in \mathbb R} \maximize_{y \in \mathbb R} f(x,y) := a x y+\frac{b}{2}(x^2-y^2),
\end{equation}
where $b<0$ and $a>0$. The associated \(F\) is a linear mapping. For a linear mapping M, \cref{ass:basic:negative} holds if 
    \[
        \tfrac12(M + M^\top) - \bar \rho M^{\top}M \preceq 0, \qquad \bar \rho\in(-\infty, \nicefrac1{2L})
    \]
    While \cref{ass:AsymPrecon:Minty:Struct} holds if 
    \[
        \tfrac12(M + M^\top)- \rho M^{\top}M \succeq 0,\qquad \rho\in(-\nicefrac1{2L}, \infty).
    \] 
For this example \(L={\sqrt{a^2+b^2}}\) and 
\[
F(z) = \overbrace*{(bx + ay, -ax + by)}^{\eqqcolon Mz}. 
\] 
Since \(M\) is a bisymmetric linear mapping, \(M^\top M = (a^2+b^2)\I\) which according to the above characterizations implies  
\[
\rho \in (-\tfrac1{2L}, \tfrac{b}{a^2+b^2}], \qquad \bar \rho \in [\tfrac{b}{a^2+b^2}, \tfrac1{2L}).
\]
The range for \(\rho\) is nonempty if \(b > -\tfrac{a}{\sqrt{3}}\) while this is not an issue for \(\bar \rho\) which allows any negative \(b\).

\end{example}

We complete this section with a corollary to \cref{thm:BiasCorr} when replacing weak MVI assumption with \cref{ass:basic:negative}. 
\begin{cor}
    Suppose that \cref{ass:A:Struct,ass:AsymPrecon:M:Lip}, \cref{ass:AsymPrecon:stoch,ass:AsymPrecon:stoch:stocLips,ass:basic:negative} hold. Let \(\seq{z^k}\) denote the sequence generated by \Cref{alg:WeakMinty:Sto:Struct} applied to \(-F\). Then, the claims of \cref{thm:BiasCorr} hold true. 
\end{cor}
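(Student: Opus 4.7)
The plan is to show this corollary reduces directly to \Cref{thm:BiasCorr} via the substitution $\tilde F \coloneqq -F$, together with the stochastic oracle $\hat{\tilde F}(z,\xi) \coloneqq -\hat F(z,\xi)$. The key observation is that \Cref{ass:basic:negative} for $F$ is equivalent to \Cref{ass:AsymPrecon:Minty:Struct} for $\tilde F$ with the parameters related by $\rho = -\bar\rho$: indeed,
\begin{equation*}
\langle \tilde F z, z - z^\star\rangle = -\langle F z, z - z^\star\rangle \geq -\bar\rho\|F z\|^2 = (-\bar\rho)\|\tilde F z\|^2,
\end{equation*}
so if $\bar\rho \in (-\infty, \nicefrac{1}{2L_F})$, then $\rho = -\bar\rho \in (-\nicefrac{1}{2L_F}, \infty)$, which is precisely the range required by \Cref{thm:BiasCorr}.

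Next, I would verify that every remaining hypothesis of \Cref{thm:BiasCorr} is preserved under the sign flip. Lipschitz continuity of $\tilde F$ holds with the same constant $L_F$ since $\|\tilde F z - \tilde F z'\| = \|F z - F z'\|$, so \Cref{ass:AsymPrecon:M:Lip} transfers. The maximal monotonicity of $A\equiv 0$ in \Cref{ass:A:Struct} is trivially preserved. For the stochastic oracle $\hat{\tilde F}(z,\xi) = -\hat F(z,\xi)$, unbiasedness and the two-point property in \Cref{ass:AsymPrecon:stoch} are immediate, and bounded variance together with mean-Lipschitzness in \Cref{ass:AsymPrecon:stoch:stocLips} hold verbatim because both involve only squared norms and are insensitive to a global sign change.

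Finally, I would observe that \Cref{alg:WeakMinty:Sto:Struct} applied to $(-F,\hat{\tilde F})$ is an algorithm in the form covered by \Cref{thm:BiasCorr}: the update rule, bias-correction term, and stepsize condition \eqref{eq:BiasCorr:requirements} depend on $F$ only through $L_F$ and $L_{\hat F}$ (which are unchanged) and on $\rho$ (which is replaced by $-\bar\rho$, still in the admissible range). Hence \Cref{thm:BiasCorr} applied to $(-F,\hat{\tilde F})$ yields almost sure convergence of the iterates $\seq{z^k}$ to some $z^\star \in \zer(-F) = \zer F = \zer T$, which is exactly the claim.

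The proof is essentially routine once the sign correspondence is identified; the only minor subtlety is checking that the requirement \eqref{eq:BiasCorr:requirements} on $\rho$, when rewritten in terms of $\bar\rho$, gives a meaningful (nonempty) admissible range in the negative weak MVI regime, but this follows from the symmetry of the interval endpoints around $0$ and the fact that the remaining terms in \eqref{eq:BiasCorr:requirements} depend only on $\gamma, L_F, L_{\hat F}$, and $\alpha_k$.
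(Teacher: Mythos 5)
Your proposal is correct and follows the same route as the paper: the corollary is exactly the observation that \cref{ass:basic:negative} for $F$ is the weak MVI (\cref{ass:AsymPrecon:Minty:Struct}) for $-F$ with $\rho = -\bar\rho \in (-\nicefrac{1}{2L_F},\infty)$, while all remaining assumptions and the constants $L_F$, $L_{\hat F}$ are invariant under the sign flip and $\zer(-F)=\zer F$, so \cref{thm:BiasCorr} applies verbatim to $-F$. The paper states the corollary without further proof precisely on these grounds, so nothing is missing from your argument.
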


\section{Experiments}\label{app:experiments}
    \begin{figure}[t]
\centering
\includegraphics[width=0.8\textwidth]{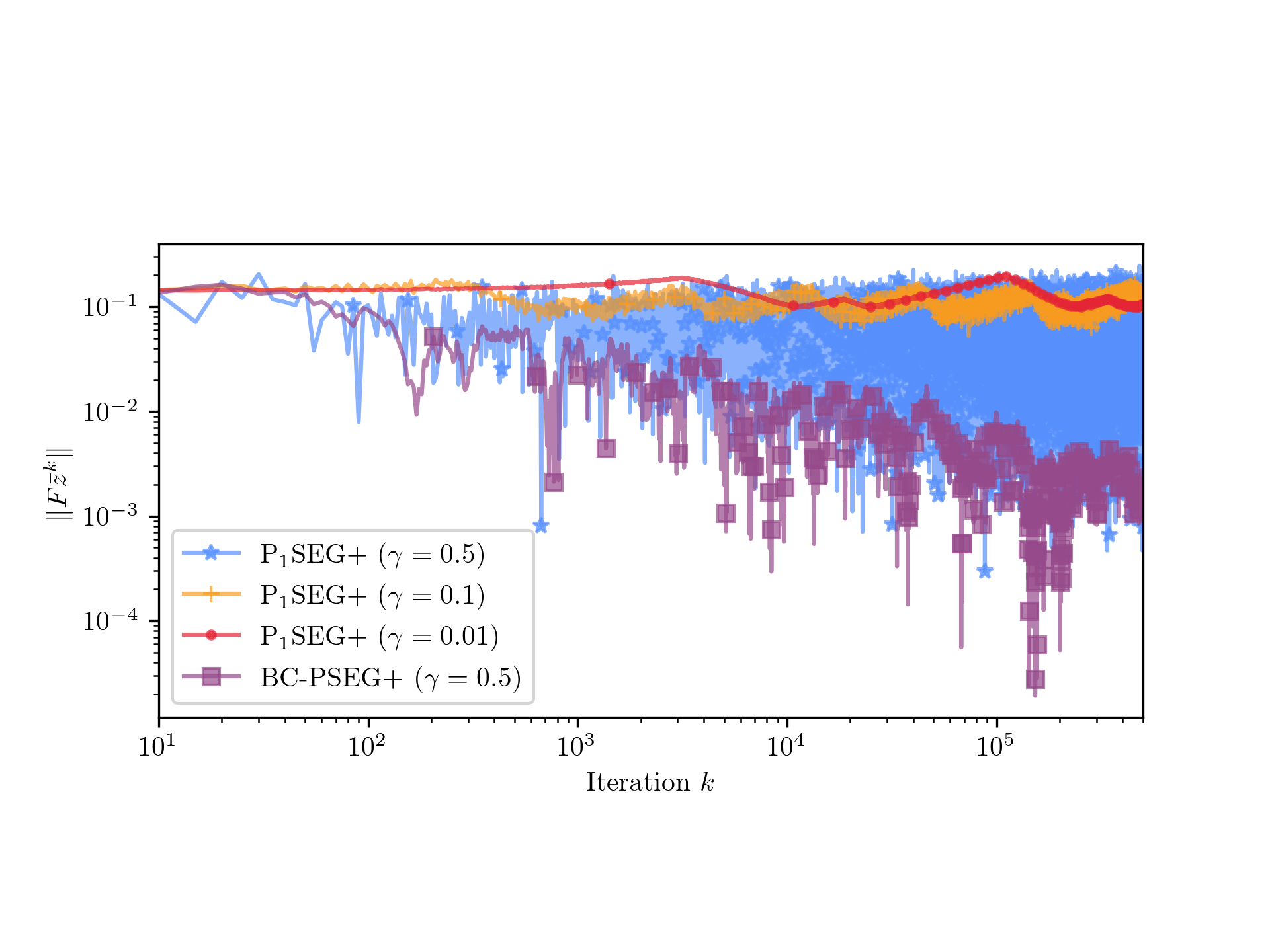}
\vspace{-2em}
\caption{
  \rbl{
The (projected) \eqref{eq:seg+} method needs to take $\gamma$ arbitrarily small to guarantee convergence to an arbitrarily small neighborhood. 
We show an instance satisfying the weak MVI where $\gamma$ cannot be taken arbitrarily small.
The objective is $\psi(x,y)=\phi(x-0.9,y-0.9)$ under box constraints $\|(x,y)\|_\infty\leq 1$ with $\phi$ from \Cref{ex:quadratic} where $L=1$ and $\rho = -\nicefrac{1}{10L}$.
The unique stationary point $(x^\star,y^\star)=(0.9,0.9)$ lies in the interior, so even $\|Fz\|$ can be driven to zero.
Taking $\gamma$ smaller does \emph{not} make the neighborhood smaller as oppose to the monotone case in \Cref{fig:monotone}.}
}
\label{fig:SEG+:counterexample}
\end{figure}

\begin{figure}[t]
\centering
\includegraphics[width=0.5\textwidth]{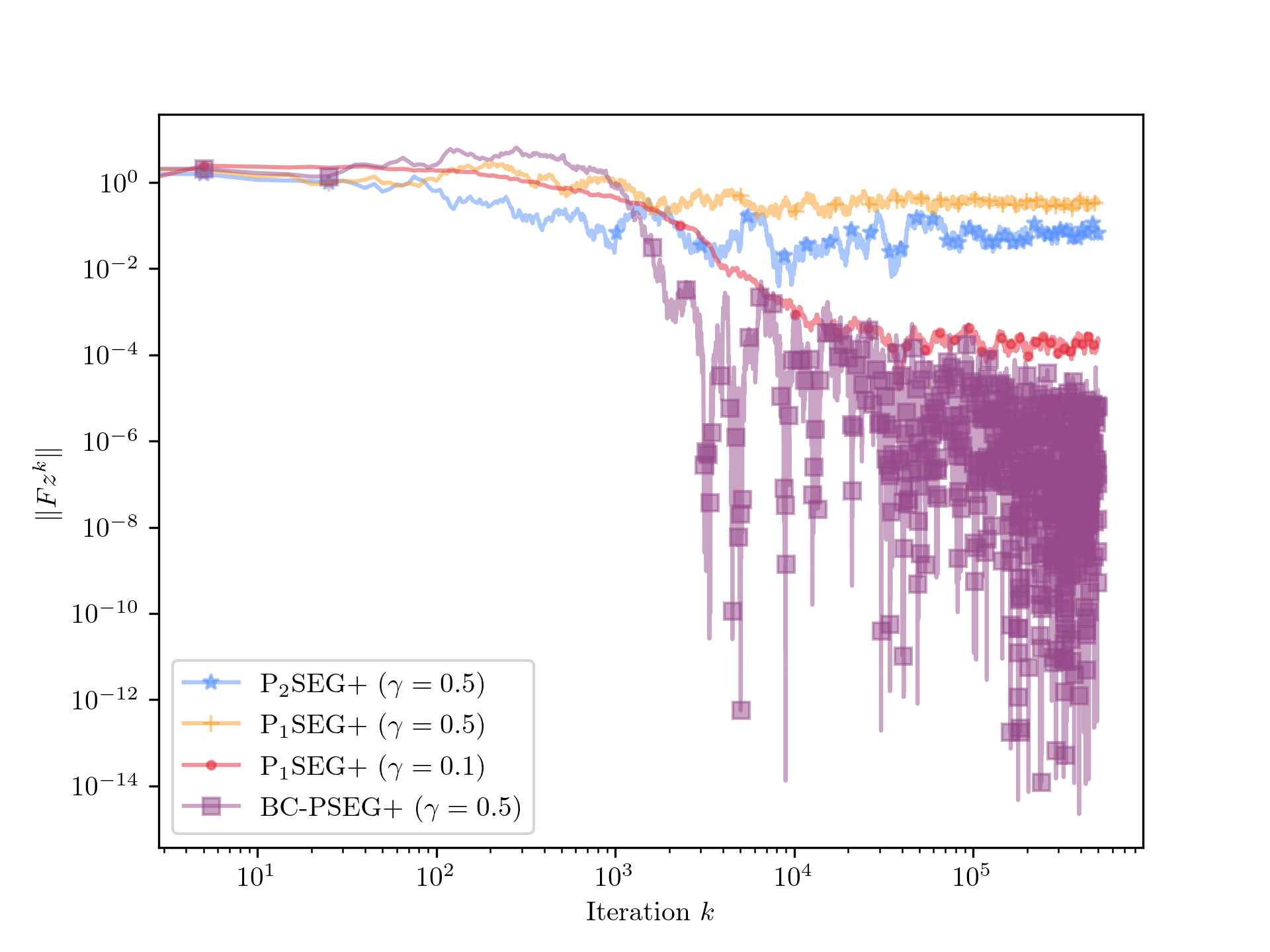}%
\includegraphics[width=0.5\textwidth]{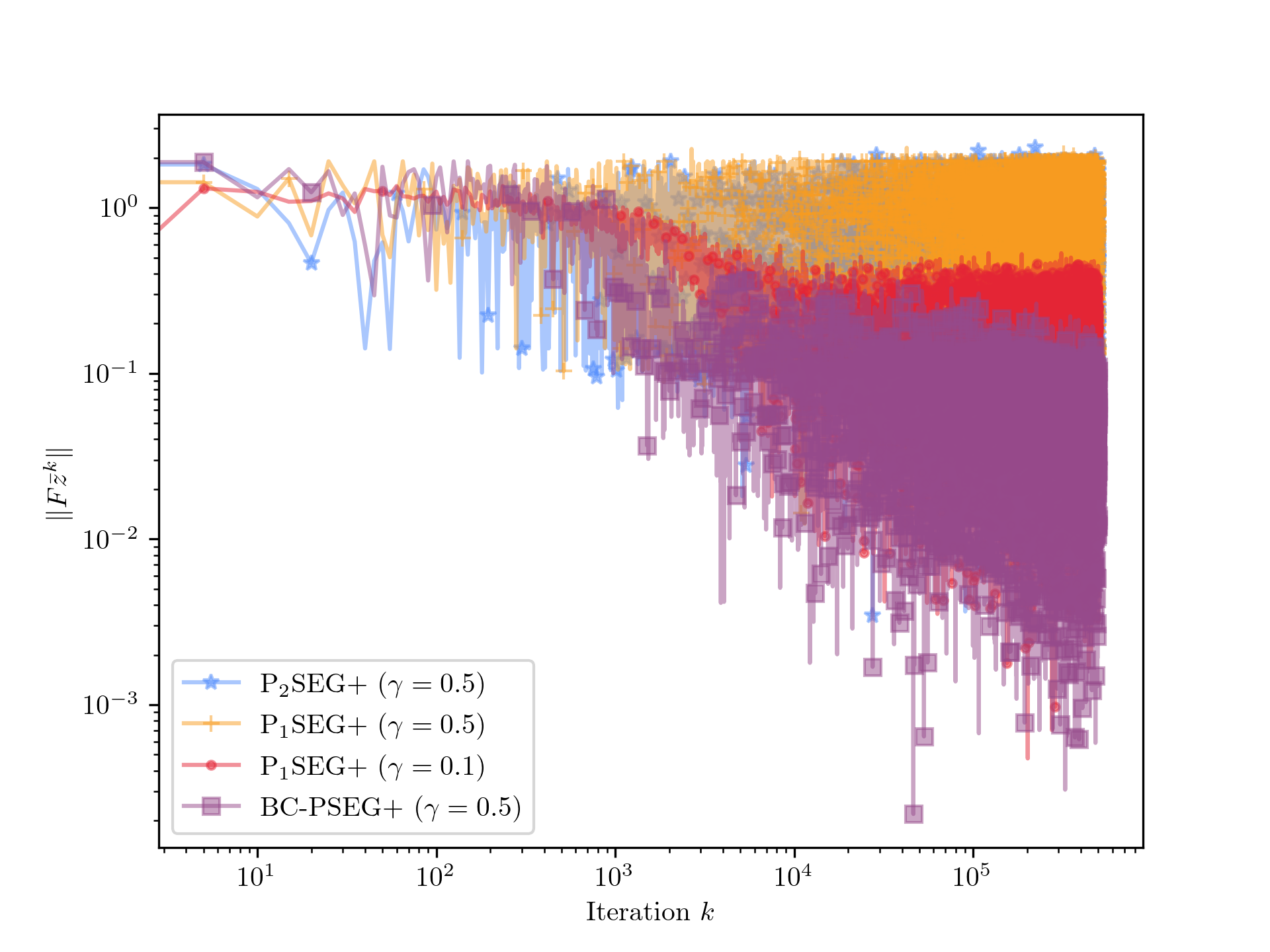}\\
\includegraphics[width=0.5\textwidth]{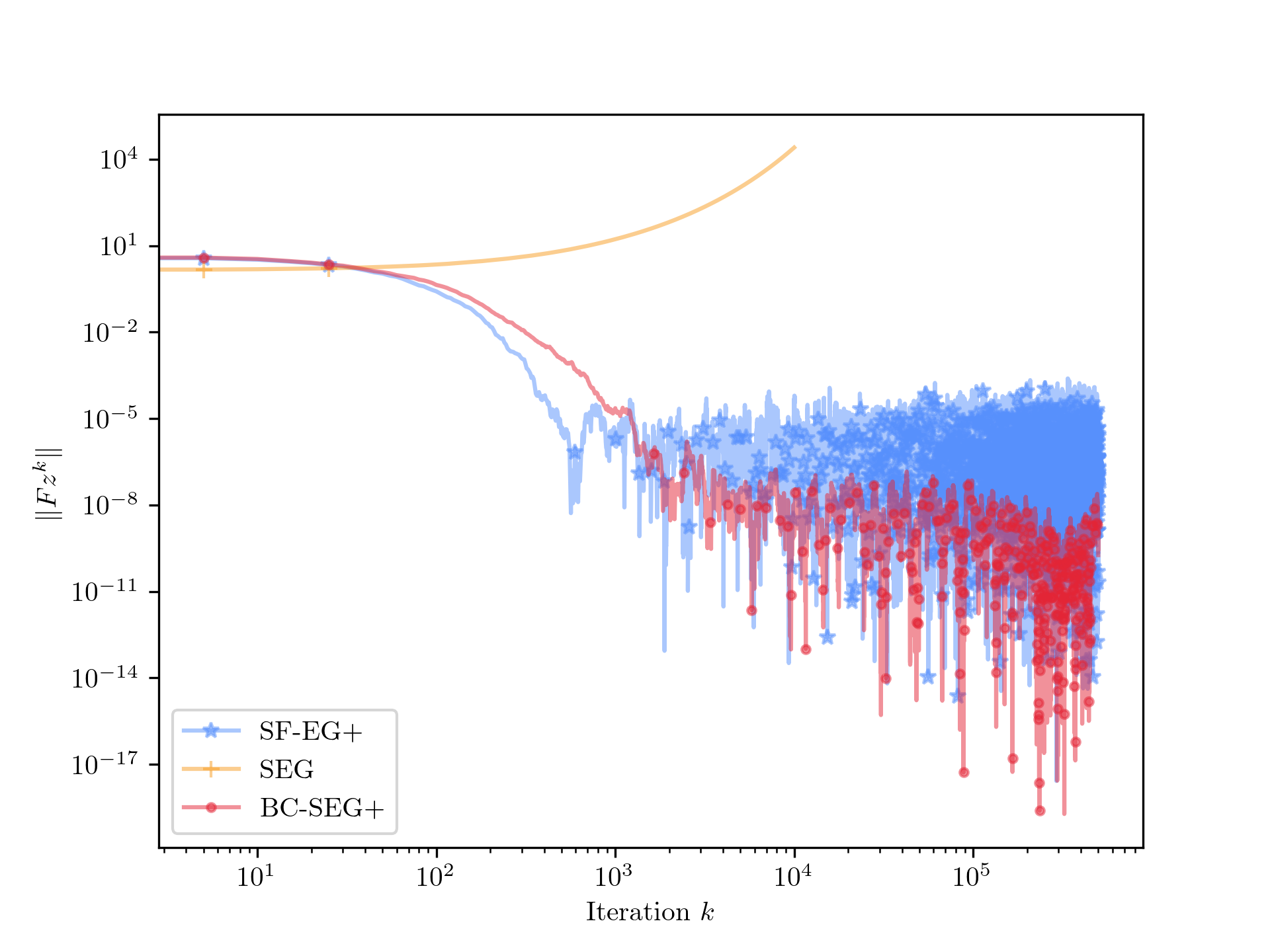}%
\includegraphics[width=0.5\textwidth]{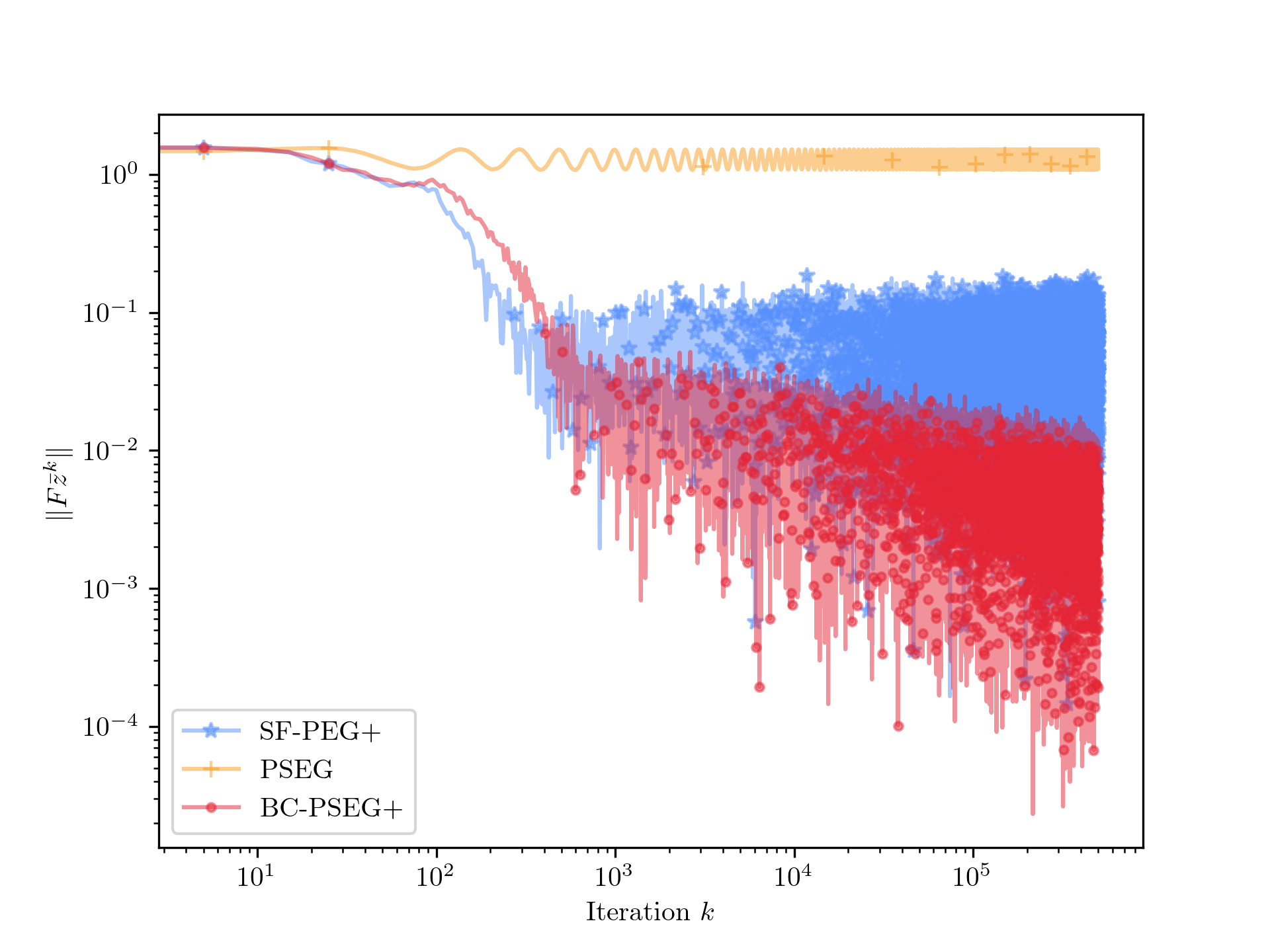}
\caption{\rbl{Instead of taking $\alpha_k \propto \nicefrac{1}{k}$ (for which almost sure convergence is established through \Cref{thm:BiasCorr,app:thm:AFBA:almostsure}) we take $\alpha_k \propto \nicefrac{1}{\sqrt{k}}$ as permitted in \Cref{thm:BiasCorr:2,thm:const:convergence}.
  We consider the example provided in \Cref{fig:monotone} (top row) and the two examples from \Cref{fig:simulations} (bottom row).
  Under this more aggressive stepsize schedule the guarantee is only in expectation over the iterates which is also apparent from the relatively large volatility in comparison with \Cref{fig:monotone,fig:simulations}.}
}
\label{fig:aggressive-step}
\end{figure}

\subsection{Synthetic example}

\begin{example}[{Unconstrained quadratic game \cite[Ex. 5]{pethick2022escaping}}]
\label{ex:quadratic}
Consider,
\begin{equation}
\minimize_{x \in \mathbb R} \maximize_{y  \in \mathbb R} \phi(x,y) := axy + \frac{b}{2}x^2 - \frac{b}{2}y^2,
\end{equation}
where $a \in \mathbb R_+$ and $b \in \mathbb R$.
\end{example}
The problem constants in \Cref{ex:quadratic} can easily be computed as $\rho = \frac{b}{a^2+b^2}$ and $L = \sqrt{a^2+b^2}$.
We can rewrite \Cref{ex:quadratic} in terms of $L$ and $\rho$ by choosing
$a=\sqrt{L^2-L^4 \rho ^2}$ and $b = L^2 \rho$.

\begin{example}[{Constrained minimax \cite[Ex. 4]{pethick2022escaping}}]
\label{ex:globalforsaken}
Consider
\begin{equation}
\label{eq:globalforsaken}
\tag{GlobalForsaken}
\minimize_{|x|\leq\nicefrac{4}{3}} \maximize_{|y|\leq\nicefrac{4}{3}} \phi(x,y):=xy+\psi(x)-\psi(y),
\end{equation}
where $\psi(z) = \frac{2 z^6}{21}-\frac{z^4}{3}+\frac{z^2}{3}$.
\end{example}

In both \Cref{ex:quadratic} and \Cref{ex:globalforsaken} the operator $F$ is defined as $Fz = (\nabla_x \phi(x,y), -\nabla_y \phi(x,y))$.

To simulate a stochastic setting in all examples, we consider additive Gaussian noise, i.e.
$\hat F(z,\xi) = Fz + \xi$ where $\xi \sim \mathcal N(0,\sigma^2 I)$.
We choose $\sigma = 0.1$ and initialize with $z^0=1$ if not specified otherwise.
The default configuration is $\gamma=\nicefrac{1}{2L_F}$ with $\alpha_k=\nicefrac{1}{18 \cdot (k/c+1)}$, $c=100$ and $\beta_k = \alpha_k$ for diminishing stepsize schemes \rbl{and $\alpha=\nicefrac{1}{18}$ for fixed stepsize schemes.
We make two exceptions: \Cref{fig:monotone} uses the slower decay $c=1000$ when $\gamma=0.1$ and \Cref{fig:SEG+:counterexample} uses $c=5000$ for $\gamma=0.01$ (and otherwise $c=1000$) to ensure fast enough convergence.
When the aggressive stepsize schedule is used then $\alpha_k=\nicefrac{1}{18 \cdot \sqrt{k/100+1}}$.}

\subsection{Additional algorithmic details}\label{app:algo}

For the constrained setting in \Cref{fig:monotone}, we consider two extensions of \eqref{eq:seg+}.
One variant uses a single application of the resolvent as suggested by \citet{pethick2022escaping},
\begin{equation}
\label{eq:p1seg+}
\tag{$P_1$SEG+}
\begin{split}
\bar{z}^k &= (\id+\gamma A)^{-1}(z^k - \gamma \hat F(z^k, \xi_k)) \ \quad \text{with}\quad \xi_k \sim \mathcal{P}  \\
z^{k+1}&=z^{k}+\alpha_{k}\left((\bar{z}^{k}-z^{k})-\gamma(\hat F(\bar z^k, \bar\xi_k)-\hat F(z^k, \xi_k))\right) \quad \text{with}\quad \bar{\xi}_k \sim \mathcal{P}  \\
\end{split}
\end{equation}
The other variant applies the resolvent twice as in stochastic Mirror-Prox \citep{juditsky2011solving},
\begin{equation}
\label{eq:p2seg+}
\tag{$P_2$SEG+}
\begin{split}
\bar{z}^k &= (\id+\gamma A)^{-1}(z^k - \gamma \hat F(z^k, \xi_k)) \ \quad \text{with}\quad \xi_k \sim \mathcal{P}  \\
z^{k+1} &= (\id+\alpha_k\gamma A)^{-1}(z^k - \alpha_k \gamma \hat F(\bar{z}^k, \bar{\xi}_k)) \quad \text{with}\quad \bar{\xi}_k \sim \mathcal{P}  \\
\end{split}
\end{equation}
When applying \eqref{eq:seg} to constrained settings we similarly use the following projected variants:
\begin{equation}
\label{eq:PSEG}
\tag{PSEG}
\begin{split}
\bar{z}^k &= (\id+\beta_k\gamma A)^{-1}(z^k - \beta_k \gamma \hat F(z^k, \xi_k)) \ 
\quad \text{with}\quad \xi_k \sim \mathcal{P}  \\
z^{k+1} &= (\id+\alpha_k\gamma A)^{-1}(z^k - \alpha_k \gamma \hat F(\bar{z}^k, \bar{\xi}_k)) 
\quad \text{with}\quad \bar{\xi}_k \sim \mathcal{P}  \\
\end{split}
\end{equation}
and \eqref{eq:eg+} \rbl{(using stochastic feedback denoted SF)}
\begin{equation}
\label{eq:SF-PEG+}
\tag{SF-PEG+}
\begin{split}
\bar{z}^k &= (\id+\gamma A)^{-1}(z^k - \gamma \rbl{\hat F(z^k, \xi_k)) 
\quad \text{with}\quad \xi_k \sim \mathcal{P}} \\
z^{k+1} &= (\id+\alpha\gamma A)^{-1}(z^k - \alpha\gamma \rbl{\hat F(\z^k, \bar \xi_k))
\quad \text{with}\quad \bar{\xi}_k \sim \mathcal{P}}
\end{split}
\end{equation}
\rbl{which we in the unconstrained case ($A\equiv 0$) refer to as \eqref{eq:SF-EG+} as defined below.
\begin{equation}
\label{eq:SF-EG+}
\tag{SF-EG+}
\begin{split}
\bar{z}^k &= z^k - \gamma \hat F(z^k, \xi_k) 
\quad \text{with}\quad \xi_k \sim \mathcal{P} \\
z^{k+1} &= z^k - \alpha\gamma \hat F(\z^k, \bar \xi_k)
\quad \text{with}\quad \bar{\xi}_k \sim \mathcal{P}
\end{split}
\end{equation}}

    \section{Comparison with variance reduction}\label{sec:finiteSum}
        Consider the case where the expectation comes in the form a finite sum,
\begin{equation}
Fz = \frac 1N\sum_{\xi = 1}^N \hat F(z, \xi).
\end{equation}
In the worst case the averaged Lipschitz constant $F_{\hat F}$ scales proportionally to the number of elements $N$ squared, i.e. $L_{\hat F} = \Omega(\sqrt{N} L_F)$.
It is easy to construct such an example by taking one elements to have Lipschitz constant $N L$ while letting the remaining elements have Lipschitz constant $L$. 
Recalling the definition in \Cref{ass:AsymPrecon:stoch:stocLips}, $L_{\hat F}^2 = \frac{N^2 L^2}{N} + \frac{N-1}{N}L^2\geq N L^2$ while the average becomes $L_F=\frac{N L}{N} + \frac{N-1}{N}L \leq 2L$ so $L_{\hat F} \geq \nicefrac{\sqrt{N}}{2L_F}$. 
Thus, $L_{\hat F}$ can be $\sqrt{N}$ times larger than $L_F$, leading to a potentially strict requirement on the weak MVI parameter $\rho > -\nicefrac{L_{\hat F}}{2}$ for variance reduction methods.

\end{document}